\def\Ddots{\mathinner{\mkern1mu\raise\p@
		\vbox{\kern7\p@\hbox{.}}\mkern2mu
		\raise4\p@\hbox{.}\mkern2mu\raise7\p@\hbox{.}\mkern1mu}}
\def\XXint#1#2#3{{\setbox0=\hbox{$#1{#2#3}{\int}$}
		\vcenter{\hbox{$#2#3$}}\kern-.5\wd0}}
\begin{document}
	\def\rn{{\mathbb R^n}}  \def\sn{{\mathbb S^{n-1}}}
	\def\co{{\mathcal C_\Omega}}
	\def\z{{\mathbb Z}}
	\def\nm{{\mathbb (\rn)^m}}
	\def\mm{{\mathbb (\rn)^{m+1}}}
	\def\n{{\mathbb N}}
	\def\cc{{\mathbb C}}
	
	\newtheorem{defn}{Definition}[section]
	\newtheorem{thm}{Theorem}[section]
	\newtheorem{lem}{Lemma}[section]
	\newtheorem{cor}{Corollary}[section]
	\newtheorem{rem}{Remark}[section]
	\newtheorem{pro}{Proposition}[section]
	\newtheorem{que}{Question}
	
	\renewcommand{\theequation}{\arabic{section}.\arabic{equation}}

	\title
	{\bf\Large Characterizations for multi-sublinear operators and their commutators on three kinds of generalized weighted Morrey spaces and applications
\footnotetext{{\it Key words and phrases}: multilinear vector-valued Calder\'on-Zygmund operator; multilinear Littlewood-Paley square operator; multilinear Littlewood-Paley $g$-function; multilinear Marcinkiewicz integral; commutator; Dini kernel; multilinear pseudo-differential operator; multilinear paraproduct; generalized weighted variable exponent Morrey space.
			\newline\indent\hspace{1mm} {\it 2020 Mathematics Subject Classification}: Primary 42B20; Secondary 42B25, 42B35, 47H60, 47B47.}}
	
	\date{}
	\author{Xi Cen\footnote{Corresponding author, E-mail: xicenmath@gmail.com}, Xiang Li and Dunyan Yan}
	\maketitle
	
\begin{center}
\begin{minipage}{13cm}
{\small {\bf Abstract:}\quad
The main questions raised in this paper are to find the sufficient conditions that make multi-sublinear operators $T$ and their commutators ${T_{\prod \vec b }}$, ${T_{\sum {\vec b} }}$ to be bounded on three kinds of generalized weighted Morrey spaces. We give the main theorems of this paper to solve the above related questions. As corollaries of the main theorems, we give sufficient and necessary conditions for a class of multi-sublinear operators which are bounded on three kinds of generalized weighted Morrey spaces. As some inportant applications, we apply the main results to the multilinear vector-valued Calder\'on-Zygmund operators, multilinear Littlewood-Paley square operators, multilinear pseudo-differential operators and multilinear paraproducts.}
\end{minipage}
\end{center}
\section{Introduction}\label{sec1}
The theory of multilinear Calder\'on-Zygmund operators and multilinear Littlewood-Paley operators have played very important roles in modern harmonic analysis with lots of extensive applications in the others fields of mathematics, which have been extensively studied. The multilinear Calderón–Zygmund theory was first studied by Coifman and Meyer, whose study was motivated not only as generalizations of the theory of linear ones but also its natural appearance in harmonic analysis. Recently, this topic has received increasing attentions and well development and a lot of research work involves these operators from various points of view. see \cite{Lerner,xue1,xue2,xue3,xue4,xue5,LuZhang,Zhang,Janson,Ismayilova,Lin-Yan,LuYangZhou,Xu,Huang-Xu,Hu-Li-Wang} for details.

We now firstly recall the definitions of multilinear Calderón-Zygmund operators with Dini kernel.
\begin{defn}[\cite{LuZhang}]
	Suppose that $\theta :[0, + \infty ) \to [0, + \infty )$ is a nondecreasing function with $0 < \theta(1) < \infty.$ For $a>0,$ we say $\theta  \in Dini(a),$ if $${\left[ \theta  \right]_{Dini(a)}} = \int_0^1 {\frac{{{\theta ^a}(t)}}{t}} dt < \infty. $$
\end{defn}
\begin{defn}[\cite{LuZhang}]
	For any $t \in (0,\infty ),$ let ${K}(x,{y_1}, \cdots ,{y_m})$ be a locally integrable function defined away from the diagonal $x = {y_1} =  \cdots  = {y_m}$ in $(\mathbb R^n)^{m+1}$. We say $K$ is a kernel of type $\theta$ if for some constants $A>0,$ such that 
	\begin{enumerate}
		\item[\emph{(1)}]$
		{\left| {K(x,\vec y)} \right|} \le \frac{A}{{{{(\sum\limits_{j = 1}^m {\left| {x - {y_j}} \right|} )}^{mn}}}};
		$\item[\emph{(2)}]$
		{\left| {{K}(x,\vec y) - {K}(x,{y_1}, \cdots ,{y_i}^\prime , \cdots ,{y_m})} \right|} \le \frac{A}{{{{(\sum\limits_{j = 1}^m {\left| {x - {y_j}} \right|} )}^{mn}}}} \cdot \theta (\frac{{{{\left| {{y_i} - {{y'}_i}} \right|} }}}{{\sum\limits_{j = 1}^m {\left| {x - {y_j}} \right|} }}),$;
		\item[\emph{(3)}]$
		{\left| {{K}(z,\vec y) - {K}(x,\vec y)} \right|} \le \frac{A}{{{{(\sum\limits_{j = 1}^m {\left| {x - {y_j}} \right|} )}^{mn}}}} \cdot \theta (\frac{{\left| {z - x} \right|}}{{\sum\limits_{j = 1}^m {\left| {x - {y_j}} \right|} }})$,
	\end{enumerate}
	where $(2)$ holds for any $i \in \{ 1, \cdots ,m\}$, whenever $\left| {{y_i} - {y_i}^\prime } \right| \le  \frac{1}{2}\mathop {\max }\limits_{1 \le j \le m} \{ \left| {x - {y_j}} \right|\}$
	and $(3)$ holds whenever $\left| {z - x} \right| \le \frac{1}{2} \mathop {\max }\limits_{1 \le j \le m} \{ \left| {x - {y_j}} \right|\}$.
\end{defn}
When $\theta \left( t \right) = {t^\gamma }$ for some $\gamma  > 0$, we say $K$ is a $m$-linear Calder\'on-Zygmund kernel.

We say $T: {\mathscr S}({\rn}) \times  \cdots  \times {\mathscr S}({\rn}) \to {\mathscr S}'({\rn})$ is an $m$-linear Calder\'on-Zygmund operator with kernel $K$ if
\begin{equation*}\label{NW8}
T(\vec f)(x) = \int_{{\nm}} {K(x,\vec y)\prod\limits_{j = 1}^m {{f_j}({y_j})d{\vec y}}},
\end{equation*}
for any $\vec f \in {C_c^\infty}({\rn}) \times  \cdots  \times {C_c^\infty}({\rn})$ and any $x \notin \bigcap\limits_{j = 1}^m {{\rm{supp}}{f_j}}$, and $T$ can be extended to be a bounded operator from ${L^{{q_1}}} \times  \cdots \times{L^{{q_m}}}$ to ${L^q}$, for some $1 \le {q_1} \cdots , {q_m} < \infty, \frac{1}{q} = \sum\limits_{k = 1}^m {\frac{1}{{{q_k}}}}$.

$T$ is called a $m$-linear Calder\'on-Zygmund operator with Dini kernel $K$ when $K$ is a kernel of type $\theta \in Dini(1)$.

Let $T$ be a $m$-sublinear operator, for $\vec{b}=(b_1,\cdots,b_m) \in {(L_{loc}^1)^m}$, the $m$-sublinear commutator of $T$ and $\vec{b}$ is defined by
\begin{equation*}
{T_{\Sigma \vec b}}({f_1}, \cdots ,{f_m})(x) =\sum\limits_{j = 1}^m {T_{\vec b}^j} (\vec f)(x): = \sum\limits_{j = 1}^m {T({f_1}, \cdots ,({b_j}(x) - {b_j}){f_j}, \cdots {f_m})(x)};
\end{equation*}
the iterated commutator of $T$ and $\vec{b}$ is defined by
\begin{equation*}
{T_{\Pi \vec b}}(\vec f)(x) = T(({b_1}(x) - {b_1}){f_1}, \cdots ,({b_m}(x) - {b_m}){f_m})(x).
\end{equation*}
Now, we give some definitions of weights and some important weighted spaces.
\begin{defn}
Let $\omega$ be a weight function on a measurable set $E \subseteq \rn$ and $1 \le p<\infty$. The weighted Lebesgue spaces are defined by
	\begin{equation*}
{L^p}(E,\omega dx) = \{ f:{\left\| f \right\|_{{L^p}(E,\omega dx)}}: = {\left( {\int_E {{{\left| {f(x)} \right|}^p}\omega (x)dx} } \right)^{\frac{1}{p}}} < \infty \}.
	\end{equation*}
	The weak weighted Lebesgue spaces are defined by
	\begin{equation*}
W{L^p}(E,\omega dx) = \{ f:{\left\| f \right\|_{W{L^p}(E,\omega dx)}}: = \mathop {\sup }\limits_{\alpha  > 0} \alpha \cdot \omega {\left( {\{ x \in E:\left| {f(x)} \right| > \alpha \} } \right)^{\frac{1}{p}}} < \infty \}.
	\end{equation*}
When $p= \infty$, 	
\begin{equation*}
{L^\infty }(E,\omega dx) = W{L^\infty }(E,\omega dx) = \mathop {ess\sup }\limits_{x \in E} \left| {f(x)} \right|\omega (x).
\end{equation*}
For simplicity, we abbreviate ${L^p}(\rn,\omega dx)$ to ${L^p}(\omega )$ and $W{L^p}(\rn,\omega dx)$ to $W{L^p}(\omega )$.
\end{defn}
The classical Morrey spaces $L^{p,\lambda}$ were first introduced by Morrey in \cite{Morrey} to study the local behavior of solutions to second order elliptic partial differential equations. In 1998, Lu, Yang and Zhou \cite{LuYangZhou} studied the sublinear operators with rough kernel on generalized Morrey spaces. In 2009, Komori and Shirai \cite{Komori} considered the weighted version of Morrey spaces $L^{p,\kappa}(\omega)$ and studied the boundedness of some classical operators such as the Hardy-Littlewood maximal operator and the Calder\'on-Zygmund operator on these spaces. In the same year, Guliyev \cite{Gu0} first studied the boundedness of the maximal, potential and singular operators on the generalized Morrey spaces. This greatly promotes the mathematical workers to study the Morrey spaces and singular integral operators.

In 2012, Guliyev \cite{Gu1} proved boundedness of higher order commutators of sublinear operators on generalized weighted Morrey spaces. 
In 2014, Hu, Li and Wang \cite{Hu-Wang, Hu-Li-Wang} studied the multilinear singular integral operators and multilinear fractional integral operators on generalized weighted Morrey spaces.
In 2021, Ismayilova \cite{Ismayilova} studied Calderón-Zygmund operators with kernels of Dini’s type and their multilinear commutators on generalized Morrey spaces. In the same year, Lin and Yan \cite{Lin-Yan} considered the multilinear strongly singular Calderón-Zygmund operators and commutators on Morrey type spaces.
In 2022, Guliyev \cite{Gu3} proved the boundedness of multilinear Calderón–Zygmund operators with kernels of Dini’s type and their commutators on generalized local Morrey spaces. 
In 2023, Guliyev \cite{Gu2} obtained the boundedness of commutators of multilinear Calderón–Zygmund operators with kernels of Dini’s type on generalized weighted Morrey spaces and applications and Cen \cite{cen1} proved boundedness of multilinear Littlewood-Paley square operators and their commutators on weighted Morrey spaces.

Let us recall the following definitions of generalized weighted Morrey spaces and generalized local weigthed Morrey spaces.

\begin{defn}[\cite{Gu1}]
	Let $1 \le p<\infty$, $\omega$ be a weight function on $\mathbb R^n$ and $\varphi$ be a positive measurable function on $\rn \times \left( {0,\infty } \right)$. The generalized weighted Morrey spaces are defined by
	\begin{equation*}
{M^{p,\varphi }}(\omega ) = \{ f:{\left\| f \right\|_{{M^{p,\varphi }}(\omega )}}: = \mathop {\sup }\limits_{x \in \rn,r > 0} \varphi {(x,r)^{ - 1}}\omega {(B(x,r))^{ - \frac{1}{p}}}{\left\| f \right\|_{{L^p}(B(x,r),\omega dx)}} < \infty\}.
	\end{equation*}
	The weak generalized weighted Morrey spaces are defined by
	\begin{equation*}
W{M^{p,\varphi }}(\omega ) = \{ f:{\left\| f \right\|_{{M^{p,\varphi }}(\omega )}}: = \mathop {\sup }\limits_{x \in \rn,r > 0} \varphi {(x,r)^{ - 1}}\omega {(B(x,r))^{ - \frac{1}{p}}}{\left\| f \right\|_{W{L^p}(B(x,r),\omega dx)}}< \infty\}.
	\end{equation*}
\end{defn}

\begin{rem}
	
	$(1)~$ If $\omega \equiv 1$, then $M^{p,\varphi}(1)=M^{p,\varphi}$ are the generalized Morrey spaces and $WM^{p,\varphi}(1)=WM^{p,\varphi}$ are the weak generalized Morrey spaces.
	
	$(2)~$ If $\varphi(x,r) \equiv \omega(B(x,r))^{\frac{\kappa-1}{p}}$, then
	$M^{p,\varphi}(\omega)=L^{p,\kappa}(\omega)$ is the weighted Morrey spaces.
	
	$(3)~$ If $\varphi(x,r) \equiv v(B(x,r))^{\frac{\kappa}{p}} \omega(B(x,r))^{-\frac{1}{p}}$, then
	$M^{p,\varphi}_{}(\omega)=L^{p,\kappa}_{}(v,\omega)$ is the two weighted Morrey spaces.
	
	$(4)~$ If $w\equiv1$ and $\varphi(x,r)=r^{\frac{\lambda-n}{p}}$ with $0<\lambda<n$, then $M^{p,\varphi}(\omega)=L^{p,\lambda}$ is the Morrey spaces and $WM^{p,\varphi}(\omega)=WL^{p,\lambda}$ is the weak Morrey spaces.
	
	$(5)~$ If $\varphi(x,r) \equiv \omega(B(x,r))^{-\frac{1}{p}}$, then $M^{p,\varphi}(\omega)=L^{p}(\omega)$ is the weighted Lebesgue spaces.
\end{rem}

\begin{defn}
	Let $1 \le p<\infty$, $\omega$ be a weight function on $\mathbb R^n$ and $\varphi$ be a positive measurable function on $\rn \times \left( {0,\infty } \right)$. The generalized local weighted Morrey spaces are defined by
	\begin{equation*}
	M_{{x_0}}^{p,\varphi }(\omega ) = \{ f:{\left\| f \right\|_{M_{{x_0}}^{p,\varphi }(\omega )}}: = \mathop {\sup }\limits_{r > 0} \varphi {({x_0},r)^{ - 1}}\omega {(B({x_0},r))^{ - \frac{1}{p}}}{\left\| f \right\|_{{L^p}(B({x_0},r),\omega dx)}} < \infty \}.
	\end{equation*}
	The weak generalized local weighted Morrey spaces are defined by
	\begin{equation*}
	WM_{{x_0}}^{p,\varphi }(\omega ) = \{ f:{\left\| f \right\|_{WM_{{x_0}}^{p,\varphi }(\omega )}}: = \mathop {\sup }\limits_{r > 0} \varphi {({x_0},r)^{ - 1}}\omega {(B({x_0},r))^{ - \frac{1}{p}}}{\left\| f \right\|_{{WL^p}(B({x_0},r),\omega dx)}} < \infty \}.
	\end{equation*}
\end{defn}
With the development of variable exponent Lebesgue spaces, the generalized variable exponent Morrey spaces have been studied by some mathematical workers in recent years. From 2018-2021, Guliyev \cite{Gu5,Gu6,Gu7} studied the Maximal and singular integral operators and their commutators, Calderón-Zygmund operators with kernels of Dini’s type on generalized weighted Morrey spaces with variable exponent and also proved the boundedness of Calderón-Zygmund operators with kernels of Dini’s type and their multilinear commutators on generalized variable exponent morrey spaces. In 2022, Xu \cite{Xu} gave the boundedness of bilinear $\theta $-type Calderón-Zygmund operators and its commutators on generalized variable exponent Morrey spaces.

In order to present the main results of this paper, we start giving some conceptions for weigthed variable exponent Lebesgue spaces and generalized weigthed variable exponent Morrey spaces.

Given an open set $E \subseteq {\rn}$ and a measurable function $p( \cdot ): {\rm{E}} \to \left[ {1,\infty } \right)$, $p'( \cdot )$ is the conjugate exponent defined by $p'( \cdot ) = \frac{{p( \cdot )}}{{p( \cdot ) - 1}}$. For a measurable subset $E \subseteq {\rn}$, we denote ${p^ - }(E)= \mathop {ess\inf }\limits_{x \in E} \{ p(x)\}, {p^ + }(E) =\mathop {ess\sup }\limits_{x \in E} \{ p(x)\}$. Especially, we denote ${p^ - }={p^ - }(\rn)$ and ${p^ + }={p^ + }(\rn)$. We give several sets of measurable functions as follows.
\begin{align*}
&{\mathcal P}\left( E \right) = \{ p( \cdot ) :{\rm{E}} \to \left[ {1,\infty } \right) \text{ is measurable: } 1 < {p^ - }(E) \le {p^{\rm{ + }}}(E) < \infty \};\\
&{{\mathcal P}_1}\left( {E} \right) = \{ p( \cdot ) :{\rm{E}} \to \left[ {1,\infty }\right) \text{ is measurable: } 1 \le {p^ - }(E) \le {p^{\rm{ + }}}(E) < \infty \};\\
&{{\mathcal P}_0}\left( {E} \right) = \{ p( \cdot ) :{\rm{E}} \to \left[ {0,\infty }\right) \text{ is measurable: } 0 < {p^ - }(E) \le {p^{\rm{ + }}}(E) < \infty \}.
\end{align*}
Obviously, we have ${\mathcal P}\left( E \right) \subseteq {{\mathcal P}_1}\left( {E} \right) \subseteq {{\mathcal P}_0}\left( {E} \right)$. When $E=\rn$, we take the shorthand. For example ${\mathcal P}\left( \rn \right)$, we write it by ${\mathcal P}$.

\begin{defn}[\cite{Gu5}]
	Let $p( \cdot )$ is a measurable function. We define the variable exponent Lebesgue spaces with Luxemburg norm by
	\begin{equation*}
	L_{}^{p( \cdot )}(E) = \{ f:{\left\| f \right\|_{L_{}^{p( \cdot )}(E)}} := \inf \{ \lambda  > 0:{\int_E^{} {\left( {\frac{{\left| {f(x)} \right|}}{\lambda }} \right)} ^{p(x)}}dx \le 1\}  < \infty \}.
	\end{equation*}
	For a open set $\Omega \subseteq \rn$, we define the locally variable exponent Lebesgue spaces by
	\begin{equation*}
	L_{loc}^{p( \cdot )}(\Omega) = \{ f:f \in L_{}^{p( \cdot )}(E) \text{ for all compact subsets } E \subseteq \Omega \}.
	\end{equation*}
	Let $\omega$ be a weight function on $E$. The variable exponent weighted Lebesgue spaces are defined by
	\begin{equation*}
	L_{}^{p( \cdot )}(E,\omega dx) = \{ f:{\left\| f \right\|_{L_{}^{p( \cdot )}(E,\omega dx)}} = {\left\| {\omega f} \right\|_{L_{}^{p( \cdot )}(E)}} < \infty \}.
	\end{equation*}
\end{defn}
We define a important set $\mathcal{B}$ by
\begin{equation*}
\mathcal{B} := \{ p( \cdot ) \in {\mathcal P}:\text{ the Hardy-Littlewood maximal operator }M \in B(L_{}^{p( \cdot )} \to L_{}^{p( \cdot )}))\}.
\end{equation*}
We say $p( \cdot ) \in LH(\rn)$(globally log-H\"{o}lder continuous functions), if $p( \cdot )$ satisfies
\begin{align*}
&\left| {p(x) - p(y)} \right| \le \frac{C}{{ - \log \left( {\left| {x - y} \right|} \right)}}, when \left| {x - y} \right| \le \frac{1}{2}, and\\
&\left| {p(x) - p(y)} \right| \le \frac{C}{{\log \left( {e + \left| x \right|} \right)}}, when \left| y \right| \ge \left| x \right|.
\end{align*}
The variable exponent ${A_{p( \cdot )}}\left( {\rn} \right)$ are defined by
\begin{equation*}
{A_{p( \cdot )}}\left( {\rn} \right) = \{ \omega \text{ is a weight: }{\left[ \omega  \right]_{{A_{p( \cdot )}}}} = \mathop {\sup }\limits_B {\left| B \right|^{ - 1}}{\left\| \omega  \right\|_{L_{}^{p( \cdot )}(B(x,r))}}{\left\| {{\omega ^{ - 1}}} \right\|_{L_{}^{p'( \cdot )}(B(x,r))}} < \infty \} 
\end{equation*}

\begin{defn}[\cite{Gu5}]
	Let $p( \cdot ) \in {\mathcal P}_1$, $\omega$ be a weight function on $\mathbb R^n$ and $\varphi$ be a positive measurable function on $\rn \times \left( {0,\infty } \right)$. The generalized weighted variable exponent Morrey spaces are defined by
	\begin{equation*}
	{M^{p( \cdot ),\varphi }}(\omega ) = \{ f:{\left\| f \right\|_{{M^{p( \cdot ),\varphi }}(\omega )}}: = \mathop {\sup }\limits_{x \in {\rn},r > 0} \varphi {(x,r)^{ - 1}}\left\| \omega  \right\|_{L_{}^{p( \cdot )}(B(x,r))}^{ - 1}{\left\| f \right\|_{{L^{p( \cdot )}}(B(x,r),\omega dx)}} < \infty \}.
	\end{equation*}
	where ${\left\| f \right\|_{{L^{p( \cdot )}}(B(x,r),\omega dx)}} \equiv {\left\| {f{\chi _{B(x,r)}}} \right\|_{{L^{p( \cdot )}}(\omega )}}$.
\end{defn}

\begin{rem}
	
	$(1)~$ If $\omega \equiv 1$, then ${M^{p( \cdot ),\varphi }}(\omega )(1)={M^{p( \cdot ),\varphi }}(\omega )$ are the generalized variable exponent Morrey spaces.
	
	$(2)~$ If $\varphi(x,r) \equiv \omega(B(x,r))^{\frac{\kappa-1}{p(x)}}$, then
	${M^{p( \cdot ),\varphi }}(\omega )={L^{p( \cdot ),\kappa }}(\omega )$ is the weighted variable exponent Morrey spaces.
	
	$(3)~$ If $\varphi(x,r) \equiv v(B(x,r))^{\frac{\kappa}{p(x)}} \left\| \omega  \right\|_{L_{}^{p( \cdot )}(B(x,r))}^{ - 1}$, then
	$M^{p(\cdot),\varphi}_{}(\omega)=L^{p(\cdot),\kappa}_{}(v,\omega)$ is the two weighted variable exponent Morrey spaces.
	
	$(4)~$ If $w\equiv1$ and $\varphi(x,r)=r^{\frac{\lambda-n}{p(x)}}$ with $0<\lambda<n$, then $M^{p(\cdot),\varphi}(\omega)=L^{p(\cdot),\lambda}$ is the variable exponent Morrey spaces.
	
	$(5)~$ If $\varphi(x,r) \equiv \left\| \omega  \right\|_{L_{}^{p( \cdot )}(B(x,r))}^{ - 1}$, then $M^{p(\cdot),\varphi}(\omega)=L^{p(\cdot)}(\omega)$ is the weighted variable exponent Lebesgue spaces.
\end{rem}

In this paper, we mainly address the following two questions.
\begin{que}\label{que1}
What conditions guarantee boundedness of the multi-sublinear operators and their commutators on generalized weighted Morrey spaces, generalized local weighted Morrey spaces and generalized weighted variable exponent Morrey spaces?
\end{que}

\begin{que}\label{que2}
If the above conditions exist, are there any operator in harmonic analysis that satisfies the above conditions and thus has the corresponding operator boundedness?
\end{que}
To solve these questions, we give the following crucial definitions.
\begin{defn}
Let (quasi or semi) normed spaces $X_i \subseteq L_{loc}^1$, $i = 1, \cdots ,m$, $T$ is an m-sublinear operator from $\prod\limits_{i = 1}^m X_i$ to $L_{loc}^1$. We say m-sublinear operator $T \in LS\left({\prod\limits_{i = 1}^m X_i } \right)$, if T satisfies the local size condition:
for any ball $B \subseteq \rn$, $f_i \in X_i $, $1 \le l \le m$, the following inequality holds 
\begin{equation}\label{cen1}
{\left\| {T(f_1^\infty , \ldots ,f_\ell ^\infty ,f_{\ell  + 1}^0, \ldots ,f_m^0)} \right\|_{{L^\infty }\left( B \right)}} \lesssim \sum\limits_{j = 1}^\infty  {\prod\limits_{i = 1}^m {\frac{1}{{\left| {{2^{j + 1}}B} \right|}}} \int_{{2^{j + 1}}B} {\left| {{f_i}({y_i})} \right|d{y_i}} },
\end{equation}
where $f_i^0 = {f_i}{\chi _{2B}},f_i^\infty  = {f_i}{\chi _{{{\left( {2B} \right)}^c}}}$.
\end{defn}

\begin{defn}
Let (quasi or semi) normed spaces ${X_i}, Y \subseteq L_{loc}^1$, $i = 1, \cdots ,m$, $T$ is an m-sublinear operator from $\prod\limits_{i = 1}^m X_i$ to $L_{loc}^1$. We say m-sublinear operator $T \in LB\left( {\prod\limits_{i = 1}^m {{X_i}}  \to Y} \right)$ if T satisfies the local boundedness condition:
	for any ball $B \subseteq \rn$, $f_i \in {X_i}$, the following inequality holds 
	\begin{equation}\label{cen2}
{\left\| T(f_1^0, \ldots ,f_m^0) \right\|_Y} \lesssim \prod\limits_{i = 1}^m {{{\left\| {{f_i}} \right\|}_{{X_i}}}}.
	\end{equation}
where $f_i^0 = {f_i}{\chi _{2B}}$.
\end{defn}
Now, we introduce the main results of this paper. we shall present the main results on three kinds of generalized weighted Morrey spaces and answer the previous questions.

\begin{center}
\textbf{1: Generalized weighted Morrey spaces}
\end{center}
\begin{thm}\label{thm1}
Let $m\in \mathbb{N}$, $1\leq p_k<\infty$, $k=1,2,\ldots,m$ with $1/p=\sum_{k=1}^m 1/{p_k}$, $\vec{\omega}=(\omega_1,\ldots,\omega_m) \in {A_{\vec P}} \cap {\left( {{A_\infty }}\right)^m}$, $v$ is a weight and a group of non-negative measurable functions $({{\vec \varphi }_1},{\varphi _2}) = ({\varphi _{11}}, \ldots ,{\varphi _{1m}},{\varphi _2})$ satisfy the condition:
\begin{equation}\label{con3}
{\left[ {{{\vec \varphi }_1},{\varphi _2}} \right]_1}: = \mathop {\sup }\limits_{x \in \rn,r > 0} {\varphi _2}{(x,r)^{{\rm{ - }}1}}\int_r^\infty  {\frac{{\mathop {{\rm{essinf}}}\limits_{t < \eta  < \infty } \prod\limits_{i = 1}^m {{\varphi _{1i}}(x,\eta ){\omega _i}{{(B(x,\eta ))}^{\frac{1}{{{p_i}}}}}} }}{{\prod\limits_{i = 1}^m {\omega {{(B(x,t))}^{\frac{1}{{{p_i}}}}}} }}\frac{{dt}}{t}}  < \infty.
\end{equation}
Set $T$ is an m-sublinear operator on $\prod\limits_{i = 1}^m {M^{{p_i},{\varphi _{1i}}}}\left( {{\omega _i}} \right)$, which satisfies 
\begin{equation}\label{cen3}
T \in LS\left( {\prod\limits_{i = 1}^m {{M^{{p_i},{\varphi _{1i}}}}\left( {{\omega _i}} \right)} } \right)\cap LB\left( {\prod\limits_{i = 1}^m {{M^{{p_i},{\varphi _{1i}}}}\left( {{\omega _i}} \right)}  \to {M^{p,{\varphi _2}}}\left( {{v}} \right)} \right).
\end{equation}
\begin{enumerate}[(i)]
\item If $\mathop {\min }\limits_{1 \le k \le m} \{ {p_k}\}  > 1$, then T is bounded from ${M^{{p_1},{\varphi _{11}}}}({\omega _1}) \times  \cdots  \times {M^{{p_m},{\varphi _{1m}}}}({\omega _m})$ to ${M^{p,{\varphi _2}}}({v})$, i.e., $T \in B\left( {\prod\limits_{i = 1}^m {{M^{{p_i},{\varphi _{1i}}}}\left( {{\omega _i}} \right)}  \to {M^{p,{\varphi _2}}}\left( {{v}} \right)} \right);$
\item If $\mathop {\min }\limits_{1 \le k \le m} \{ {p_k}\}  = 1$, then T is bounded from ${M^{{p_1},{\varphi _{11}}}}({\omega _1}) \times  \cdots  \times {M^{{p_m},{\varphi _{1m}}}}({\omega _m})$ to ${WM^{p,{\varphi _2}}}({v})$, i.e., $T \in B\left( {\prod\limits_{i = 1}^m {{M^{{p_i},{\varphi _{1i}}}}\left( {{\omega _i}} \right)}  \to {WM^{p,{\varphi _2}}}\left( {{v}} \right)} \right)$.
\end{enumerate}
\end{thm}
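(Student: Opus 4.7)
The plan is to fix a ball $B = B(x_0, r)$, decompose each argument by $f_i = f_i^0 + f_i^\infty$ with $f_i^0 = f_i\chi_{2B}$, and use $m$-sublinearity of $T$ to split
\[
\|T(\vec f)\|_{L^p(B, v\,dx)} \le \|T(f_1^0,\dots,f_m^0)\|_{L^p(B, v\,dx)} + \sum_{\alpha\neq\vec 0}\|T(f_1^{\alpha_1},\dots,f_m^{\alpha_m})\|_{L^p(B, v\,dx)},
\]
where $\alpha=(\alpha_1,\ldots,\alpha_m)\in\{0,\infty\}^m\setminus\{\vec 0\}$. The fully local term is handled directly by \eqref{cen2}: since $\|T(f_1^0,\dots,f_m^0)\|_{M^{p,\varphi_2}(v)}\lesssim\prod_i\|f_i\|_{M^{p_i,\varphi_{1i}}(\omega_i)}$, specialising the Morrey supremum at $B(x_0,r)$ already supplies the required $\varphi_2(x_0,r)v(B)^{1/p}$ factor on the right-hand side.

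For each of the remaining $2^m-1$ terms I will apply \eqref{cen1} to get
\[
\|T(f_1^{\alpha_1},\dots,f_m^{\alpha_m})\|_{L^\infty(B)} \lesssim \sum_{j=1}^\infty \prod_{i=1}^m \frac{1}{|2^{j+1}B|}\int_{2^{j+1}B} |f_i(y_i)|\,dy_i,
\]
and pay the cost $v(B)^{1/p}$ to upgrade this $L^\infty(B)$ bound to an $L^p(B,v\,dx)$ bound. Then I will invoke Hölder's inequality index by index together with the $A_{\vec P}\cap(A_\infty)^m$ hypothesis, which delivers $\frac{1}{|Q|}\|\omega_i^{-1/p_i}\|_{L^{p_i'}(Q)}\lesssim\omega_i(Q)^{-1/p_i}$, and insert the Morrey envelope $\|f_i\|_{L^{p_i}(2^{j+1}B,\omega_i)}\le\varphi_{1i}(x_0,2^{j+1}r)\omega_i(B(x_0,2^{j+1}r))^{1/p_i}\|f_i\|_{M^{p_i,\varphi_{1i}}(\omega_i)}$. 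The $\omega_i(B(x_0,2^{j+1}r))^{\pm1/p_i}$ factors then cancel on each dyadic level, leaving
\[
\prod_{i=1}^m \frac{1}{|2^{j+1}B|}\int_{2^{j+1}B}|f_i|\,dy_i \lesssim \prod_{i=1}^m\varphi_{1i}(x_0,2^{j+1}r)\|f_i\|_{M^{p_i,\varphi_{1i}}(\omega_i)}.
\]

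To match \eqref{con3}, I will convert the dyadic sum into $\int_r^\infty dt/t$ (using $\int_{2^j r}^{2^{j+1}r}dt/t=\log 2$), reintroduce the ratio $\prod_i\omega_i(B(x_0,2^{j+1}r))^{1/p_i}/\prod_i\omega_i(B(x_0,t))^{1/p_i}\sim 1$ via the $A_\infty$-doubling of each $\omega_i$, and replace the numerator by its essential infimum over $\eta\in(t,\infty)$; the resulting majorant is precisely $\varphi_2(x_0,r)\prod_i\|f_i\|_{M^{p_i,\varphi_{1i}}(\omega_i)}$ by \eqref{con3}. Dividing by $\varphi_2(x_0,r)v(B)^{1/p}$ and taking the supremum over $(x_0,r)$ then yields part (i). For part (ii), at each index where $p_k=1$ the Hölder step degenerates into a trivial $L^1$ bound (the $A_{\vec P}$ condition at $p_k=1$ reading as $\omega_k\in A_1$), the $L^\infty(B)$ estimate on the off-diagonal terms embeds trivially into $WL^p(B,v\,dx)$, and the fully local term is absorbed by the natural weak-type analogue of \eqref{cen2}; no further change is needed.

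\textbf{Main obstacle.} The most delicate step is the passage from the dyadic sum to the integral form of \eqref{con3}: the essinf over $\eta\in(t,\infty)$ is designed to accommodate non-monotone $\varphi_{1i}$, but propagating it through the product $\prod_i\varphi_{1i}\omega_i^{1/p_i}$ requires simultaneously exploiting the doubling of each $\omega_i$ (hence the $(A_\infty)^m$ part of the hypothesis) and uniformly bounding the ratio $\prod_i\omega_i(B(x_0,2^{j+1}r))^{1/p_i}/\prod_i\omega_i(B(x_0,t))^{1/p_i}$ on each dyadic annulus, with constants depending on the $A_\infty$ indices of the $\omega_i$.
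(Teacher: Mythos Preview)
Your approach is essentially the same as the paper's: decompose, use the $LB$ hypothesis for the fully local piece, use the $LS$ hypothesis for the off-diagonal pieces, convert the dyadic sum to $\int_{2r}^\infty\cdots\,dt/t$, and close with condition \eqref{con3}. The paper packages your ``insert the Morrey envelope and pass to the essinf'' step into a single citation of the weighted Hardy-operator Lemma~\ref{Gu2}, applied to the non-decreasing function $g(t)=\prod_i\|f_i\|_{L^{p_i}(B(x_0,t),\omega_i\,dx)}$; what you wrote is exactly the proof of that lemma specialised to this $g$, so there is no real difference in content, and the ``main obstacle'' you flag is not actually an obstacle once phrased this way.

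One step as written is incorrect, however. The claim that $A_{\vec P}\cap(A_\infty)^m$ delivers the \emph{index-by-index} bound $\tfrac{1}{|Q|}\|\omega_i^{-1/p_i}\|_{L^{p_i'}(Q)}\lesssim\omega_i(Q)^{-1/p_i}$ is false: that inequality is precisely $\omega_i\in A_{p_i}$, which is implied neither by $\omega_i\in A_\infty$ nor by $\vec\omega\in A_{\vec P}$ (the latter only gives $\omega_i^{1-p_i'}\in A_{mp_i'}$, cf.\ Lemma~\ref{lem4}). What \emph{is} true, and what you actually need, is the bound on the \emph{product}:
\[
\prod_{i=1}^m\|\omega_i^{-1/p_i}\|_{L^{p_i'}(Q)}\ \lesssim\ |Q|^m\prod_{i=1}^m\omega_i(Q)^{-1/p_i},
\]
which follows by combining the $A_{\vec P}$ inequality with the equivalence $\prod_i\omega_i(Q)^{p/p_i}\approx v_{\vec\omega}(Q)$ of Lemma~\ref{cen4} (this is where the $(A_\infty)^m$ hypothesis enters). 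That is exactly what the paper invokes at the last step of \eqref{ieq3.1}. Likewise, in part~(ii) the $A_{\vec P}$ condition at $p_k=1$ reads as $\omega_k^{1/m}\in A_1$, not $\omega_k\in A_1$; but since only the product over $i$ ever appears in the estimate, this does not affect the argument. With the product bound in place of the individual ones, your plan goes through verbatim.
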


Next, we give the following theorem to answer question \ref{que1} for multi-sublinear commutators ${T_{\prod \vec b }}$.
\begin{thm}\label{thm2}
Let $m\in \mathbb{N}$, $1< p_k<\infty$, $k=1,2,\ldots,m$ with $1/p=\sum_{k=1}^m 1/{p_k}$, $\vec{\omega}=(\omega_1,\ldots,\omega_m)\in {A_{\vec P}} \cap {\left( {{A_\infty }}\right)^m}$, $v \in {A_\infty }$ and a group of non-negative measurable functions $({{\vec \varphi }_1},{\varphi _2})$ satisfy the condition:
\begin{equation}\label{con4}
{\left[ {{{\vec \varphi }_1},{\varphi _2}} \right]_2}: = \mathop {\sup }\limits_{x \in \rn,r > 0} {\varphi _2}{(x,r)^{{\rm{ - }}1}}\int_r^\infty  {{{\left( {1 + \log \frac{t}{r}} \right)}^m}\frac{{\mathop {{\rm{essinf}}}\limits_{t < \eta  < \infty } \prod\limits_{i = 1}^m {{\varphi _{1i}}(x,\eta ){\omega _i}{{(B(x,\eta ))}^{\frac{1}{{{p_i}}}}}} }}{{\prod\limits_{i = 1}^m {\omega {{(B(x,t))}^{\frac{1}{{{p_i}}}}}} }}\frac{{dt}}{t}}  < \infty.
\end{equation}
Set ${T_{\prod \vec b }}$ be a iterated commutator of $\vec b$ and m-sublinear operator $T$, where

$T \in LS\left( {\prod\limits_{i = 1}^m {{M^{{p_i},{\varphi _{1i}}}}\left( {{\omega _i}} \right)} } \right)$ and ${T_{\prod \vec b }} \in LB\left( {\prod\limits_{i = 1}^m {{M^{{p_i},{\varphi _{1i}}}}\left( {{\omega _i}} \right)}  \to {M^{p,{\varphi _2}}}\left( {{v}} \right)} \right)$.

If $\vec b \in {\left( {BMO} \right)^m}$, then ${T_{\prod \vec b }}$ is bounded from ${M^{{p_1},{\varphi _{11}}}}({\omega _1}) \times  \cdots  \times {M^{{p_m},{\varphi _{1m}}}}({\omega _m})$ to ${M^{p,{\varphi _2}}}({v})$.
Moreover, if for any $B \subseteq \rn$, $f_i \in {M^{{p_i},{\varphi _{1i}}}}({\omega _i})$, 
\begin{equation*}
{\left\| {T(f_1^0, \ldots ,f_m^0)} \right\|_{{M^{p,{\varphi _2}}(v)}}} \lesssim \prod\limits_{j = 1}^m {{{\left\| {{b_j}} \right\|}_{BMO}}} \prod\limits_{i = 1}^m {{{\left\| {{f_i}} \right\|}_{{M^{{p_i},{\varphi _{1i}}}}({\omega _i})}}},
\end{equation*}
then 
\begin{equation*}
{\left\| {{T_{\prod {\vec b} }}} \right\|_{{M^{{p_1},{\varphi _{11}}}}({\omega _1}) \times  \cdots  \times {M^{{p_m},{\varphi _{1m}}}}({\omega _m}) \to {M^{p,{\varphi _2}}}({v})}} \lesssim \prod\limits_{j = 1}^m {{{\left\| {{b_j}} \right\|}_{BMO}}},
\end{equation*}
where $f_i^0 = {f_i}{\chi _{2B}}$.
\end{thm}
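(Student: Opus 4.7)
The plan follows the standard template for commutator estimates on generalized weighted Morrey spaces: a local/non-local ball split, a BMO mean-correction expansion, a pointwise annular control via the local size hypothesis, and re-expression of the resulting dyadic sum as the integral in condition (\ref{con4}). Fix a ball $B=B(x_0,r)\subseteq\rn$ and write $f_i=f_i^0+f_i^\infty$ with $f_i^0=f_i\chi_{2B}$. By multilinearity of $T_{\Pi\vec b}$ in each slot, $T_{\Pi\vec b}(\vec f)$ expands into $2^m$ summands indexed by $\vec\varepsilon\in\{0,\infty\}^m$. The unique summand with $\vec\varepsilon=\vec 0$ is controlled directly by the $LB$ hypothesis $T_{\Pi\vec b}\in LB(\prod_{i=1}^m M^{p_i,\varphi_{1i}}(\omega_i)\to M^{p,\varphi_2}(v))$, sharpened to carry $\prod_j\|b_j\|_{BMO}$ in the ``moreover'' part. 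It remains to bound the $L^p(B,v)$-norm of each of the other $2^m-1$ summands in a form that can be divided by $\varphi_2(x_0,r)\|v\|_{L^p(B)}$ and supremized over $B$.

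For a fixed $\vec\varepsilon\neq\vec 0$, I would insert the averages $b_{i,2B}:=\frac{1}{|2B|}\int_{2B}b_i\,dy$ and substitute $b_i(x)-b_i(y_i)=(b_i(x)-b_{i,2B})+(b_{i,2B}-b_i(y_i))$ in each slot, expanding the product over $i$ into $2^m$ sub-terms indexed by $I\subseteq\{1,\ldots,m\}$. The factors $\prod_{i\in I}(b_i(x)-b_{i,2B})$ depend only on $x$ and pull outside $T$, leaving $T(\vec g_I)(x)$ where $g_{I,i}=f_i^{\varepsilon_i}$ for $i\in I$ and $g_{I,i}=(b_{i,2B}-b_i)f_i^{\varepsilon_i}$ for $i\notin I$. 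Since at least one $\varepsilon_i=\infty$, the $LS$ hypothesis, invoked after a permutation that brings the $\infty$-slots to the front, supplies, for every $x\in B$, the pointwise annular bound $|T(\vec g_I)(x)|\lesssim\sum_{k\ge 1}\prod_{i=1}^m\frac{1}{|2^{k+1}B|}\int_{2^{k+1}B}|g_{I,i}(y_i)|\,dy_i$.

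The final task is to estimate the inner integrals and then the outer $L^p(B,v)$-norm in $x$. For slots without a BMO factor, a weighted H\"older inequality together with $A_{\vec P}\cap(A_\infty)^m$ yields $\frac{1}{|2^{k+1}B|}\int_{2^{k+1}B}|f_i|\,dy_i\lesssim\omega_i(2^{k+1}B)^{-1/p_i}\|f_i\|_{L^{p_i}(2^{k+1}B,\omega_i)}\le\varphi_{1i}(x_0,2^{k+1}r)\|f_i\|_{M^{p_i,\varphi_{1i}}(\omega_i)}$. For slots carrying $(b_{i,2B}-b_i)$, H\"older separates this factor and the classical bound $\frac{1}{|2^{k+1}B|}\int_{2^{k+1}B}|b_{i,2B}-b_i|^{q}\,dy\lesssim((k+1)\|b_i\|_{BMO})^{q}$, combined with the recentering $|b_{i,2B}-b_{i,2^{k+1}B}|\lesssim(k+1)\|b_i\|_{BMO}$, produces an extra $(1+k)\|b_i\|_{BMO}$. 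Taking the $L^p(B,v)$-norm in $x$, each factor $(b_i(x)-b_{i,2B})$ with $i\in I$ is handled by the analogous $A_\infty$-weighted BMO inequality (using $v\in A_\infty$) and contributes another $(1+k)\|b_i\|_{BMO}$, so each sub-term is dominated by $\|v\|_{L^p(B)}(1+k)^m\prod_j\|b_j\|_{BMO}\prod_i\varphi_{1i}(x_0,2^{k+1}r)\|f_i\|_{M^{p_i,\varphi_{1i}}(\omega_i)}$. Rewriting $\sum_{k\ge 1}$ as $\int_r^\infty(\cdots)\frac{dt}{t}$ with $t\sim 2^{k+1}r$ and using the essential-infimum structure in (\ref{con4}), the series is dominated by $\varphi_2(x_0,r)\|v\|_{L^p(B)}\prod_j\|b_j\|_{BMO}\prod_i\|f_i\|_{M^{p_i,\varphi_{1i}}(\omega_i)}$, giving the required estimate after dividing and taking the supremum. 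The principal obstacle I expect is securing the uniform-in-$k$ weighted H\"older estimate on each annulus via the $A_{\vec P}$ hypothesis, together with a careful pairing of BMO factors so that the logarithm appears to exact power $m$ rather than being inflated; once this annular block is in place, the remaining assembly over the $\vec\varepsilon$ and $I$ decompositions is routine book-keeping.
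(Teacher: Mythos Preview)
Your overall strategy matches the paper's proof: split into $2^m$ pieces, use the $LB$ hypothesis for the all-local term, use the BMO mean-correction expansion on the rest, apply the $LS$ pointwise bound, push the dyadic annular sum to an integral, and invoke condition~(\ref{con4}). The paper carries this out for $m=2$ and the book-keeping is identical to yours.

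There is, however, a real gap in the step where you pass from the annular estimate to condition~(\ref{con4}). You write
\[
\omega_i(2^{k+1}B)^{-1/p_i}\|f_i\|_{L^{p_i}(2^{k+1}B,\omega_i)}\le\varphi_{1i}(x_0,2^{k+1}r)\|f_i\|_{M^{p_i,\varphi_{1i}}(\omega_i)}
\]
and then hope to sum against $(1+k)^m$ using ``the essential-infimum structure in (\ref{con4}).'' But once you have replaced the weighted local norm by $\varphi_{1i}(x_0,2^{k+1}r)$ you have already thrown away exactly the information that makes the essential infimum in (\ref{con4}) usable: condition~(\ref{con4}) only controls $\int_r^\infty (1+\log\tfrac{t}{r})^m\,\operatorname{essinf}_{\eta>t}\bigl(\prod_i\varphi_{1i}(x_0,\eta)\omega_i(B(x_0,\eta))^{1/p_i}\bigr)\prod_i\omega_i(B(x_0,t))^{-1/p_i}\,\tfrac{dt}{t}$, which can be strictly smaller than $\int_r^\infty(1+\log\tfrac{t}{r})^m\prod_i\varphi_{1i}(x_0,t)\,\tfrac{dt}{t}$. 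The paper avoids this by \emph{not} passing to the Morrey norm at each scale: it keeps the nondecreasing function $g(t)=\prod_i\|f_i\|_{L^{p_i}(B(x_0,t),\omega_i)}$ and applies the weighted Hardy-operator Lemma~\ref{Gu3}, whose hypothesis is precisely~(\ref{con4}). Alternatively, you can repair your argument directly by noting that for every $\eta\ge 2^{k+1}r$ one has $\|f_i\|_{L^{p_i}(2^{k+1}B,\omega_i)}\le\varphi_{1i}(x_0,\eta)\omega_i(B(x_0,\eta))^{1/p_i}\|f_i\|_{M^{p_i,\varphi_{1i}}(\omega_i)}$, take the same $\eta$ in all $i$, and then infimize; this recovers the essential-infimum numerator of~(\ref{con4}) before summing.

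Two smaller points. First, the per-factor bound $\tfrac{1}{|2^{k+1}B|}\int_{2^{k+1}B}|f_i|\lesssim\omega_i(2^{k+1}B)^{-1/p_i}\|f_i\|_{L^{p_i}(2^{k+1}B,\omega_i)}$ would require $\omega_i\in A_{p_i}$, which is not assumed; the paper (and you should too) applies H\"older in each slot and then uses the $A_{\vec P}$ condition together with Lemma~\ref{cen4} on the \emph{product} $\prod_i\|\omega_i^{-1/p_i}\|_{L^{p_i'}(2^{k+1}B)}$. Second, the factors $(b_i(x)-b_{i,2B})$ with $i\in I$ do not depend on $k$, so they contribute $\|b_i\|_{BMO}$ via Lemma~\ref{Gu1}, not $(1+k)\|b_i\|_{BMO}$; your bound $(1+k)^m$ is still an upper bound, so this is harmless.
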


\begin{center}
\textbf{2: Generalized local weighted Morrey spaces}
\end{center}

We now extend the results of Theorem \ref{thm1}, \ref{thm2} to the generalized local weighted Morrey spaces as follows whose proofs are similar to before.

\begin{thm}\label{thm7}
	Let $m\in \mathbb{N}$, $1\leq p_k<\infty$, $k=1,2,\ldots,m$ with $1/p=\sum_{k=1}^m 1/{p_k}$, $\vec{\omega}=(\omega_1,\ldots,\omega_m) \in {A_{\vec P}} \cap {\left( {{A_\infty }}\right)^m}$, $v \in {A_\infty }$ and a group of non-negative measurable functions $({{\vec \varphi }_1},{\varphi _2}) = ({\varphi _{11}}, \ldots ,{\varphi _{1m}},{\varphi _2})$ satisfy the condition:
	\begin{equation}
	{\left[ {{{\vec \varphi }_1},{\varphi _2}} \right]_1}^\prime : = \mathop {\sup }\limits_{r > 0} {\varphi _2}{({x_0},r)^{{\rm{ - }}1}}\int_r^\infty  {\frac{{\mathop {{\rm{essinf}}}\limits_{t < \eta  < \infty } \prod\limits_{i = 1}^m {{\varphi _{1i}}({x_0},\eta ){\omega _i}{{(B({x_0},\eta ))}^{\frac{1}{{{p_i}}}}}} }}{{\prod\limits_{i = 1}^m {\omega {{(B({x_0},t))}^{\frac{1}{{{p_i}}}}}} }}\frac{{dt}}{t}}  < \infty .
	\end{equation}
	Set $T$ is an m-sublinear operator on $\prod\limits_{i = 1}^m {M_{x_0}^{{p_i},{\varphi _{1i}}}}\left( {{\omega _i}} \right)$, which satisfies 
	\begin{equation}
	T \in LS\left( {\prod\limits_{i = 1}^m {{M_{x_0}^{{p_i},{\varphi _{1i}}}}\left( {{\omega _i}} \right)} } \right)\cap LB\left( {\prod\limits_{i = 1}^m {{M_{x_0}^{{p_i},{\varphi _{1i}}}}\left( {{\omega _i}} \right)}  \to {M_{x_0}^{p,{\varphi _2}}}\left( {{v}} \right)} \right).
	\end{equation}
	\begin{enumerate}[(i)]
		\item If $\mathop {\min }\limits_{1 \le k \le m} \{ {p_k}\}  > 1$, then T is bounded from ${M_{x_0}^{{p_1},{\varphi _{11}}}}({\omega _1}) \times  \cdots  \times {M_{x_0}^{{p_m},{\varphi _{1m}}}}({\omega _m})$ to ${M_{x_0}^{p,{\varphi _2}}}({v})$, i.e., $T \in B\left( {\prod\limits_{i = 1}^m {{M_{x_0}^{{p_i},{\varphi _{1i}}}}\left( {{\omega _i}} \right)}  \to {M_{x_0}^{p,{\varphi _2}}}\left( {{v}} \right)} \right);$
		\item If $\mathop {\min }\limits_{1 \le k \le m} \{ {p_k}\}  = 1$, then T is bounded from ${M_{x_0}^{{p_1},{\varphi _{11}}}}({\omega _1}) \times  \cdots  \times {M_{x_0}^{{p_m},{\varphi _{1m}}}}({\omega _m})$ to ${WM_{x_0}^{p,{\varphi _2}}}({v})$, i.e., $T \in B\left( {\prod\limits_{i = 1}^m {{M_{x_0}^{{p_i},{\varphi _{1i}}}}\left( {{\omega _i}} \right)}  \to {WM_{x_0}^{p,{\varphi _2}}}\left( {{v}} \right)} \right)$.
	\end{enumerate}
\end{thm}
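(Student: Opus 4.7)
My plan is to follow the proof scheme of Theorem \ref{thm1} line by line, with the sole modification that every supremum over center points $x \in \rn$ is replaced by evaluation at the fixed base point $x_0$ and the supremum over $(x,r)$ is replaced by $\sup_{r > 0}$. The proof is completely parallel because the structural arguments in the proof of Theorem \ref{thm1} never move the center of the ball; they only grow its radius, so nothing is lost by fixing the base point once and for all.

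First, I would fix $r > 0$, set $B := B(x_0, r)$, and decompose each input $f_i = f_i^0 + f_i^\infty$ with $f_i^0 := f_i \chi_{2B}$ and $f_i^\infty := f_i \chi_{(2B)^c}$. By sublinearity of $T$, $|T(\vec f)(x)| \le \sum_{\vec\alpha \in \{ 0, \infty \}^m} |T(f_1^{\alpha_1}, \ldots, f_m^{\alpha_m})(x)|$, so the $L^p(B, v dx)$ norm (or weak $L^p$ norm in case (ii)) of $T(\vec f)$ is dominated by a finite sum of $2^m$ pieces. For the ``all local'' piece $\vec\alpha = (0,\ldots,0)$, I would apply the local boundedness hypothesis \eqref{cen2} with $X_i = M_{x_0}^{p_i, \varphi_{1i}}(\omega_i)$ and $Y = M_{x_0}^{p, \varphi_2}(v)$ (resp.\ $WM_{x_0}^{p, \varphi_2}(v)$), which immediately yields the bound $\|T(f_1^0, \ldots, f_m^0)\|_{L^p(B, v)} \lesssim \varphi_2(x_0, r) v(B)^{1/p} \prod_i \|f_i\|_{M_{x_0}^{p_i, \varphi_{1i}}(\omega_i)}$, exactly what is required after dividing by $\varphi_2(x_0, r) v(B)^{1/p}$ and taking $\sup_{r > 0}$.

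For each of the remaining $2^m - 1$ pieces (those with at least one $\alpha_i = \infty$), the local size hypothesis \eqref{cen1} provides the pointwise bound on $B$
$\|T(f_1^{\alpha_1}, \ldots, f_m^{\alpha_m})\|_{L^\infty(B)} \lesssim \sum_{j \ge 1} \prod_{i=1}^m \frac{1}{|2^{j+1}B|} \int_{2^{j+1}B} |f_i(y_i)|\, dy_i.$
Using H\"older's inequality on each annulus together with the weight assumption $\vec\omega \in A_{\vec P} \cap (A_\infty)^m$, each inner average is controlled by $\|f_i\|_{L^{p_i}(2^{j+1}B, \omega_i)} / \omega_i(B(x_0, 2^{j+1}r))^{1/p_i}$ up to an $A_{\vec P}$-constant, and by the definition of the local Morrey norm each $\|f_i\|_{L^{p_i}(2^{j+1}B, \omega_i)}$ is at most $\varphi_{1i}(x_0, 2^{j+1}r) \omega_i(B(x_0, 2^{j+1}r))^{1/p_i} \|f_i\|_{M_{x_0}^{p_i, \varphi_{1i}}(\omega_i)}$. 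Multiplying by $v(B)^{1/p}$ to pass from $L^\infty(B)$ to $L^p(B, v)$ and using the standard identity $\log 2 = \int_{2^{j+1}r}^{2^{j+2}r} dt/t$ with the essential infimum inside, the dyadic sum is matched by the integral defining $[\vec\varphi_1, \varphi_2]_1'$, so the hypothesis $[\vec\varphi_1, \varphi_2]_1' < \infty$ delivers the required $\sup_{r > 0}$-boundedness. The weak-type case (ii) requires the weak norm only for the all-local piece, since tail pieces admit the stronger $L^\infty(B)$ bound and therefore also the corresponding weak $L^p$ bound on $B$.

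The main technical step is only the bookkeeping that converts the dyadic sum into the integral appearing in $[\vec\varphi_1, \varphi_2]_1'$; this relies on the essential infimum inside the integrand precisely so that the estimate survives without monotonicity assumptions on the $\varphi_{1i}$. No genuinely new ideas are needed: the proof is a literal transcription of that of Theorem \ref{thm1} with the spatial supremum restricted to the single base point $x_0$, which is exactly the observation underlying the authors' remark that these proofs are ``similar to before''.
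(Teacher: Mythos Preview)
Your proposal is correct and takes essentially the same approach as the paper, which gives no separate proof for Theorem~\ref{thm7} and simply states that the argument is ``similar to before''; your line-by-line adaptation of the proof of Theorem~\ref{thm1} with the center frozen at $x_0$ is exactly what is intended. The only cosmetic difference is that the paper packages the passage from the integral $\int_{2r}^\infty \prod_i \|f_i\|_{L^{p_i}(B(x_0,t),\omega_i)}\,\omega_i(B(x_0,t))^{-1/p_i}\,dt/t$ to the product of local Morrey norms via the weighted Hardy-operator Lemma~\ref{Gu2}, whereas you carry out that step inline by inserting the essential infimum directly; the underlying computation is identical.
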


\begin{thm}\label{thm8}
	Let $m\in \mathbb{N}$, $1< p_k<\infty$, $k=1,2,\ldots,m$ with $1/p=\sum_{k=1}^m 1/{p_k}$, $\vec{\omega}=(\omega_1,\ldots,\omega_m)\in {A_{\vec P}} \cap {\left( {{A_\infty }}\right)^m}$, $v \in {A_\infty }$ and a group of non-negative measurable functions $({{\vec \varphi }_1},{\varphi _2})$ satisfy the condition:
	\begin{equation}
	{\left[ {{{\vec \varphi }_1},{\varphi _2}} \right]_2}^\prime : = \mathop {\sup }\limits_{r > 0} {\varphi _2}{({x_0},r)^{{\rm{ - }}1}}\int_r^\infty  {{{\left( {1 + \log \frac{t}{r}} \right)}^m}\frac{{\mathop {{\rm{essinf}}}\limits_{t < \eta  < \infty } \prod\limits_{i = 1}^m {{\varphi _{1i}}({x_0},\eta ){\omega _i}{{(B({x_0},\eta ))}^{\frac{1}{{{p_i}}}}}} }}{{\prod\limits_{i = 1}^m {\omega {{(B({x_0},t))}^{\frac{1}{{{p_i}}}}}} }}\frac{{dt}}{t}}  < \infty .
	\end{equation}
	Set ${T_{\prod \vec b }}$ be a iterated commutator of $\vec b$ and m-sublinear operator $T$, where
	
	$T \in LS\left( {\prod\limits_{i = 1}^m {{M_{x_0}^{{p_i},{\varphi _{1i}}}}\left( {{\omega _i}} \right)} } \right)$ and ${T_{\prod \vec b }} \in LB\left( {\prod\limits_{i = 1}^m {{M_{x_0}^{{p_i},{\varphi _{1i}}}}\left( {{\omega _i}} \right)}  \to {M_{x_0}^{p,{\varphi _2}}}\left( {{v}} \right)} \right)$.
	
	If $\vec b \in {\left( {BMO} \right)^m}$, then ${T_{\prod \vec b }}$ is bounded from ${M_{x_0}^{{p_1},{\varphi _{11}}}}({\omega _1}) \times  \cdots  \times {M_{x_0}^{{p_m},{\varphi _{1m}}}}({\omega _m})$ to ${M_{x_0}^{p,{\varphi _2}}}({v})$.
	Moreover, if for any $B \subseteq \rn$, $f_i \in {M_{x_0}^{{p_1},{\varphi _{1i}}}}({\omega _i})$, 
	\begin{equation*}
{\left\| {T(f_1^0, \ldots ,f_m^0)} \right\|_{M_{{x_0}}^{p,{\varphi _2}}(v)}} \lesssim \prod\limits_{j = 1}^m {{{\left\| {{b_j}} \right\|}_{BMO}}} \prod\limits_{i = 1}^m {{{\left\| {{f_i}} \right\|}_{M_{{x_0}}^{{p_1},{\varphi _{1i}}}({\omega _i})}}},
	\end{equation*}
	then 
	\begin{equation*}
	{\left\| {{T_{\prod {\vec b} }}} \right\|_{{M_{x_0}^{{p_1},{\varphi _{11}}}}({\omega _1}) \times  \cdots  \times {M_{x_0}^{{p_m},{\varphi _{1m}}}}({\omega _m}) \to {M_{x_0}^{p,{\varphi _2}}}({v})}} \lesssim \prod\limits_{j = 1}^m {{{\left\| {{b_j}} \right\|}_{BMO}}},
	\end{equation*}
	where $f_i^0 = {f_i}{\chi _{2B}}$.
\end{thm}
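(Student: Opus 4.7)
The plan is to mirror the proof of Theorem 1.2 with the only change that every ball is now centered at the fixed point $x_0$. Fix $r>0$, set $B=B(x_0,r)$ and write $f_i=f_i^{0}+f_i^{\infty}$ with $f_i^{0}=f_i\chi_{2B}$. Expanding $T_{\prod\vec b}(\vec f)=\sum_{\alpha\in\{0,\infty\}^m}T_{\prod\vec b}(f_1^{\alpha_1},\ldots,f_m^{\alpha_m})$ splits the analysis into the interior term $\alpha=(0,\ldots,0)$ and the $2^m-1$ off-diagonal pieces containing at least one $f_i^\infty$. For the interior term I would invoke the displayed $LB$-type hypothesis on $T$ with the $BMO$ factors, which together with the trivial estimate $\|g\|_{L^p(B,v)}\le \varphi_2(x_0,r)v(B)^{1/p}\|g\|_{M^{p,\varphi_2}_{x_0}(v)}$ directly yields the desired bound.

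For each off-diagonal term, insert $b_i(x)-b_i(y_i)=(b_i(x)-\lambda_i)-(b_i(y_i)-\lambda_i)$ with $\lambda_i=(b_i)_B$ and expand, producing $2^m$ sub-pieces indexed by $S\subseteq\{1,\ldots,m\}$. In each sub-piece the factor $\prod_{i\notin S}(b_i(x)-\lambda_i)$ is pulled outside $T$, while inside $T$ the $i$-th slot becomes $(b_i-\lambda_i)f_i^{\alpha_i}$ for $i\in S$ and $f_i^{\alpha_i}$ otherwise. Since at least one $f_i^\infty$ appears, the $LS$ hypothesis (after permuting indices to place the $\infty$-slots first) produces
\[
\bigl\|T(g_1^{\alpha_1},\ldots,g_m^{\alpha_m})\bigr\|_{L^\infty(B)}\lesssim \sum_{j=1}^\infty\prod_{i=1}^{m}\frac{1}{|2^{j+1}B|}\int_{2^{j+1}B}|g_i(y_i)|\,dy_i,
\]
with the understanding that $g_i=(b_i-\lambda_i)f_i$ for $i\in S$ and $g_i=f_i$ otherwise. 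Integrating this against $v$ on $B$ and applying Hölder to the pulled-out product requires the $A_\infty$ John--Nirenberg inequality $\bigl\|\prod_{i\notin S}(b_i-\lambda_i)\bigr\|_{L^{q}(B,v)}\lesssim v(B)^{1/q}\prod_{i\notin S}\|b_i\|_{BMO}$.

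Next I would estimate each average on $2^{j+1}B$. For $i\in S$, switching the base point from $\lambda_i=(b_i)_B$ to $(b_i)_{2^{j+1}B}$ produces a factor $(1+j)\|b_i\|_{BMO}$, and then Hölder together with the $A_{\vec P}\cap(A_\infty)^m$ machinery (as in the non-commutator case) bounds the $j$-th summand by
\[
(1+j)^{|S|}\Bigl(\prod_{i\in S}\|b_i\|_{BMO}\Bigr)\prod_{i=1}^{m}\omega_i\bigl(B(x_0,2^{j+1}r)\bigr)^{-1/p_i}\|f_i\|_{L^{p_i}(B(x_0,2^{j+1}r),\omega_i)}.
\]
The Morrey-norm definition converts each $L^{p_i}$ norm into $\varphi_{1i}(x_0,2^{j+1}r)\,\omega_i(B(x_0,2^{j+1}r))^{1/p_i}\|f_i\|_{M_{x_0}^{p_i,\varphi_{1i}}(\omega_i)}$; replacing $\varphi_{1i}$ by its essential infimum and comparing the dyadic sum to the integral in condition (1.10) via $1+j\simeq 1+\log(2^{j+1}r/r)$, with the exponent $m$ in $(1+\log(t/r))^m$ absorbing the worst case $|S|=m$, yields the clean bound $\lesssim\varphi_2(x_0,r)v(B)^{1/p}\prod_j\|b_j\|_{BMO}\prod_i\|f_i\|_{M_{x_0}^{p_i,\varphi_{1i}}(\omega_i)}$.

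Summing the interior and off-diagonal contributions, dividing by $\varphi_2(x_0,r)v(B)^{1/p}$, and taking the supremum over $r>0$ delivers both the boundedness and the explicit norm estimate. The main technical obstacle is the bookkeeping in the off-diagonal step: correctly tracking, for every sub-piece, how many $\|b_i\|_{BMO}$ factors come from the pulled-out product, how many from the inner $(b_i-\lambda_i)$ factors, and how the resulting $(1+j)^{|S|}$ matches the $(1+\log(t/r))^m$ weight in hypothesis (1.10). Since the argument is entirely parallel to Theorem 1.2 with balls restricted to be centered at $x_0$ and using $v\in A_\infty$ rather than a product-weight hypothesis, no genuinely new ingredients beyond the $A_\infty$ John--Nirenberg inequality and the $LS$/$LB$ framework are needed.
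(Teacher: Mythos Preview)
Your proposal is correct and matches the paper's approach exactly: the paper gives no separate proof of this theorem, stating only that the argument is ``similar to before'' (i.e., to Theorem~1.2), and you have faithfully outlined that argument with balls restricted to the fixed center $x_0$. One small point of precision: in the final step you write ``replacing $\varphi_{1i}$ by its essential infimum and comparing the dyadic sum to the integral in condition (1.10)''; the clean way to do this, and the way the paper does it in the proof of Theorem~1.2, is first to convert the dyadic sum $\sum_{j}(1+j)^{m}\prod_i\omega_i(2^{j+1}B)^{-1/p_i}\|f_i\|_{L^{p_i}(2^{j+1}B,\omega_i)}$ into the integral $\int_{2r}^\infty(1+\log\tfrac{t}{r})^{m}\prod_i\omega_i(B(x_0,t))^{-1/p_i}\|f_i\|_{L^{p_i}(B(x_0,t),\omega_i)}\tfrac{dt}{t}$ via the piecewise integration trick, and then invoke the weighted Hardy-type Lemma~2.8 (with $g(t)=\prod_i\|f_i\|_{L^{p_i}(B(x_0,t),\omega_i)}$, which is nondecreasing) together with hypothesis (1.10) to land on $\prod_i\|f_i\|_{M_{x_0}^{p_i,\varphi_{1i}}(\omega_i)}$; a direct dyadic-to-integral comparison \emph{after} inserting the essential infimum is awkward because the essinf moves the wrong way on dyadic blocks.
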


\begin{center}
\textbf{3: Generalized weighted variable exponent Morrey spaces}
\end{center}

In the last topic, we extend the results of Theorem \ref{thm1}, \ref{thm2} to the generalized weighted variable exponent Morrey spaces as follows.
\begin{thm}\label{thm9}
	Let $m\in \mathbb{N}$, $p( \cdot ), p_i( \cdot ) \in LH \cap {\mathcal P}, i = 1, \cdots ,m$, with $\frac{1}{{p( \cdot )}} = \sum\limits_{i = 1}^m {\frac{1}{{{p_i}( \cdot )}}}$, $\vec{\omega}=(\omega_1,\ldots,\omega_m) \in \prod\limits_{i = 1}^m {{A_{{p_i}( \cdot )}}}$, $v \in {A_{p( \cdot )} }$ and a group of non-negative measurable functions $({{\vec \varphi }_1},{\varphi _2}) = ({\varphi _{11}}, \ldots ,{\varphi _{1m}},{\varphi _2})$ satisfy the condition:
	\begin{equation}\label{con5}
	{\left[ {{{\vec \varphi }_1},{\varphi _2}} \right]_1}^{\prime \prime }: = \mathop {\sup }\limits_{x \in \rn, r > 0} {\varphi _2}{(x,r)^{{\rm{ - }}1}}\int_r^\infty  {\frac{{\mathop {{\rm{essinf}}}\limits_{t < \eta  < \infty } \prod\limits_{i = 1}^m {{\varphi _{1i}}(x,\eta ){{\left\| {{\omega _i}} \right\|}_{L_{}^{{p_i}( \cdot )}(B(x,\eta ))}}} }}{{\prod\limits_{i = 1}^m {{{\left\| {{\omega _i}} \right\|}_{L_{}^{{p_i}( \cdot )}(B(x,t))}}} }}\frac{{dt}}{t}}  < \infty.
	\end{equation}
	Set $T$ is an m-sublinear operator on $\prod\limits_{i = 1}^m {M^{{{{p_i}( \cdot )}},{\varphi _{1i}}}}\left( {{\omega _i}} \right)$, which satisfies 
	\begin{equation}\label{cen5}
	T \in LS\left( {\prod\limits_{i = 1}^m {{M^{{{{p_i}( \cdot )}},{\varphi _{1i}}}}\left( {{\omega _i}} \right)} } \right)\cap LB\left( {\prod\limits_{i = 1}^m {{M^{{{{p_i}( \cdot )}},{\varphi _{1i}}}}\left( {{\omega _i}} \right)}  \to {M^{{{p}( \cdot )},{\varphi _2}}}\left( {{v}} \right)} \right).
	\end{equation}
	Then T is bounded from ${M^{{{{p_1}( \cdot )}},{\varphi _{11}}}}({\omega _1}) \times  \cdots  \times {M^{{{{p_m}( \cdot )}},{\varphi _{1m}}}}({\omega _m})$ to ${M^{{{p}( \cdot )},{\varphi _2}}}({v})$, i.e., 
	
	$$T \in B\left( {\prod\limits_{i = 1}^m {{M^{{{{p_i}( \cdot )}},{\varphi _{1i}}}}\left( {{\omega _i}} \right)}  \to {M^{{{p}( \cdot )},{\varphi _2}}}\left( {{v}} \right)} \right).$$
\end{thm}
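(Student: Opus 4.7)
The plan is to follow the template of Theorem~\ref{thm1}, substituting the variable-exponent tools for their constant-exponent counterparts: the variable-exponent H\"older inequality and the $A_{p(\cdot)}$ identity $\|\omega\|_{L^{p(\cdot)}(B)}\|\omega^{-1}\|_{L^{p'(\cdot)}(B)}\lesssim |B|$, both available under $p(\cdot)\in LH\cap\mathcal{P}$. Fix a ball $B=B(x,r)$ and split $f_i=f_i^0+f_i^\infty$ with $f_i^0=f_i\chi_{2B}$. By $m$-sublinearity, $|T(\vec f)|\le |T(f_1^0,\ldots,f_m^0)|+\sum_{(\alpha_1,\ldots,\alpha_m)\neq\mathbf{0}}|T(f_1^{\alpha_1},\ldots,f_m^{\alpha_m})|$, and the all-zero piece is absorbed directly by the local-boundedness half of hypothesis \eqref{cen5}.

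For each remaining mixed term, the local-size half of \eqref{cen5} gives $|T(\cdots)(z)|\lesssim \sum_{j=1}^\infty \prod_{i=1}^m |2^{j+1}B|^{-1}\int_{2^{j+1}B}|f_i|$ uniformly in $z\in B$; taking $L^{p(\cdot)}(B,v\,dx)$-norms therefore extracts one factor $\|v\|_{L^{p(\cdot)}(B)}$ which cancels the corresponding normalizing factor of the target Morrey norm. Each inner integral is controlled by variable-exponent H\"older applied to $|f_i\omega_i|\cdot\omega_i^{-1}$; the $A_{p_i(\cdot)}$ relation converts $\|\omega_i^{-1}\|_{L^{p_i'(\cdot)}(2^{j+1}B)}$ into $|2^{j+1}B|/\|\omega_i\|_{L^{p_i(\cdot)}(2^{j+1}B)}$, and bounding $\|f_i\omega_i\|_{L^{p_i(\cdot)}(B(x,\eta))}$ for every enclosing ball $B(x,\eta)\supseteq B(x,2^{j+1}r)$ by the Morrey norm, followed by an essinf in $\eta$ together with the elementary product--essinf inequality $\prod_i\operatorname*{essinf} A_i\le\operatorname*{essinf}\prod_i A_i$, yields
\begin{equation*}
\prod_{i=1}^m \tfrac{1}{|2^{j+1}B|}\int_{2^{j+1}B}|f_i|\lesssim \frac{\operatorname*{essinf}_{\eta>2^{j+1}r}\prod_{i=1}^m \varphi_{1i}(x,\eta)\|\omega_i\|_{L^{p_i(\cdot)}(B(x,\eta))}}{\prod_{i=1}^m\|\omega_i\|_{L^{p_i(\cdot)}(B(x,2^{j+1}r))}}\prod_{i=1}^m \|f_i\|_{M^{p_i(\cdot),\varphi_{1i}}(\omega_i)}.
\end{equation*}

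Writing $F(t):=\operatorname*{essinf}_{\eta>t}\prod_i \varphi_{1i}(x,\eta)\|\omega_i\|_{L^{p_i(\cdot)}(B(x,\eta))}$ (non-decreasing in $t$) and averaging $F(t)\ge F(2^{j+1}r)$ over $t\in(2^{j+1}r,2^{j+2}r)$ against $dt/t$, combined with the weight-norm doubling $\|\omega_i\|_{L^{p_i(\cdot)}(B(x,2t))}\lesssim \|\omega_i\|_{L^{p_i(\cdot)}(B(x,t))}$ valid for $\omega_i\in A_{p_i(\cdot)}$ with $p_i(\cdot)\in LH\cap\mathcal{P}$, dominates each summand above by the corresponding dyadic chunk of the integral in \eqref{con5}. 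Summing in $j$ gives $\lesssim\varphi_2(x,r)[\vec\varphi_1,\varphi_2]_1''\prod_i\|f_i\|_{M^{p_i(\cdot),\varphi_{1i}}(\omega_i)}$; dividing by $\varphi_2(x,r)\|v\|_{L^{p(\cdot)}(B)}$ and taking $\sup_{x,r}$ yields the desired estimate. The principal technical point is precisely this doubling of the variable-exponent weighted norm: unlike the constant-exponent case where $\omega(2B)\lesssim \omega(B)$ is immediate from $A_\infty$, here the norm $\|\omega\|_{L^{p(\cdot)}(B)}$ does not distribute across dilations trivially, and one must exploit the log-H\"older continuity of $p_i(\cdot)$ together with the structural properties of $A_{p(\cdot)}$ weights. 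Once this doubling is in hand, the remaining bookkeeping mirrors the proof of Theorem~\ref{thm1}.
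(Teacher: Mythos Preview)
Your proof is correct and follows the same overall architecture as the paper---split $f_i=f_i^0+f_i^\infty$, use $LB$ for the all-zero piece, use $LS$ plus variable-exponent H\"older and the $A_{p_i(\cdot)}$ relation for every other piece, then feed the result into condition~\eqref{con5}.

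The one noteworthy technical difference is in how the dyadic sum is converted to the integral in~\eqref{con5}. You first pass to the Morrey bound with an $\operatorname*{essinf}$ at each fixed dyadic scale, and only afterwards average over $t\in(2^{j+1}r,2^{j+2}r)$; this forces you to compare denominators $\prod_i\|\omega_i\|_{L^{p_i(\cdot)}(B(x,2^{j+1}r))}$ and $\prod_i\|\omega_i\|_{L^{p_i(\cdot)}(B(x,t))}$, hence the doubling step you flag as ``the principal technical point.'' The paper instead performs the piecewise integration \emph{before} invoking any Morrey bound: since $B(x_0,2^{j+1}r)\subseteq B(x_0,t)$ for $t\in[2^{j+1}r,2^{j+2}r]$, every norm in sight can only increase, so pure monotonicity (no doubling) already gives
\[
\sum_{j\ge1}\prod_{i=1}^m\frac{1}{|2^{j+1}B|}\int_{2^{j+1}B}|f_i|\;\lesssim\;\int_{2r}^\infty \prod_{i=1}^m \|f_i\|_{L^{p_i(\cdot)}(B(x_0,t),\omega_i\,dx)}\,\|\omega_i\|_{L^{p_i(\cdot)}(B(x_0,t))}^{-1}\,\frac{dt}{t},
\]
after which the abstract Hardy-operator Lemma~\ref{Gu2} absorbs the $\operatorname*{essinf}$ and condition~\eqref{con5} in one stroke. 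Your doubling claim is in fact true (combine $[\omega_i]_{A_{p_i(\cdot)}}$ with the generalized H\"older lower bound $|B|\lesssim\|\omega_i\|_{L^{p_i(\cdot)}(B)}\|\omega_i^{-1}\|_{L^{p_i'(\cdot)}(B)}$), so nothing is wrong; but the paper's ordering of steps makes the point moot and yields a slightly cleaner argument that needs only monotonicity of norms plus Lemma~\ref{Gu2}.
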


\begin{thm}\label{thm10}
	Let $m\in \mathbb{N}$, $p( \cdot ), p_i( \cdot ) \in LH \cap {\mathcal P}, i = 1, \cdots ,m$, with $\frac{1}{{p( \cdot )}} = \sum\limits_{i = 1}^m {\frac{1}{{{p_i}( \cdot )}}}$, $\vec{\omega}=(\omega_1,\ldots,\omega_m) \in \prod\limits_{i = 1}^m {{A_{{p_i}( \cdot )}}}$, $v \in {A_{p( \cdot )} }$ and a group of non-negative measurable functions $({{\vec \varphi }_1},{\varphi _2})$ satisfy the condition:
	\begin{equation}\label{con6}
	{\left[ {{{\vec \varphi }_1},{\varphi _2}} \right]_2}^{\prime \prime }: = \mathop {\sup }\limits_{x \in {\rn}, r > 0} {\varphi _2}{(x,r)^{{\rm{ - }}1}}\int_r^\infty  {{{\left( {1 + \log \frac{t}{r}} \right)}^m}\frac{{\mathop {{\rm{essinf}}}\limits_{t < \eta  < \infty } \prod\limits_{i = 1}^m {{\varphi _{1i}}(x,\eta ){{\left\| {{\omega _i}} \right\|}_{L_{}^{{p_i}( \cdot )}(B(x,\eta ))}}} }}{{\prod\limits_{i = 1}^m {{{\left\| {{\omega _i}} \right\|}_{L_{}^{{p_i}( \cdot )}(B(x,t))}}} }}\frac{{dt}}{t}}  < \infty .
	\end{equation}
	Set ${T_{\prod \vec b }}$ be a iterated commutator of $\vec b$ and m-sublinear operator $T$, where
	
	$T \in LS\left( {\prod\limits_{i = 1}^m {{M^{{{p_i}( \cdot )},{\varphi _{1i}}}}\left( {{\omega _i}} \right)} } \right)$ and ${T_{\prod \vec b }} \in LB\left( {\prod\limits_{i = 1}^m {{M^{{{p_i}( \cdot )},{\varphi _{1i}}}}\left( {{\omega _i}} \right)}  \to {M^{{p}( \cdot ),{\varphi _2}}}\left( {{v}} \right)} \right)$.
	
	If $\vec b \in {\left( {BMO} \right)^m}$, then ${T_{\prod \vec b }}$ is bounded from ${M^{{{p_1}( \cdot )},{\varphi _{11}}}}({\omega _1}) \times  \cdots  \times {M^{{p_m}( \cdot ),{\varphi _{1m}}}}({\omega _m})$ to ${M^{{p}( \cdot ),{\varphi _2}}}({v})$.
	Moreover, if for any $B \subseteq \rn$, $f_i \in {M^{{{p_i}( \cdot )},{\varphi _{1i}}}}({\omega _i})$, 
	\begin{equation*}
{\left\| {T(f_1^0, \ldots ,f_m^0)} \right\|_{{M^{p( \cdot ),{\varphi _2}}}(v)}}\lesssim \prod\limits_{j = 1}^m {{{\left\| {{b_j}} \right\|}_{BMO}}} \prod\limits_{i = 1}^m {{{\left\| {{f_i}} \right\|}_{{M^{{p_i}( \cdot ),{\varphi _{1i}}}}({\omega _i})}}},
	\end{equation*}
	then 
	\begin{equation*}
	{\left\| {{T_{\prod {\vec b} }}} \right\|_{{M^{{{p_1}( \cdot )},{\varphi _{11}}}}({\omega _1}) \times  \cdots  \times {M^{{{p_m}( \cdot )},{\varphi _{1m}}}}({\omega _m}) \to {M^{{p}( \cdot ),{\varphi _2}}}({v})}} \lesssim \prod\limits_{j = 1}^m {{{\left\| {{b_j}} \right\|}_{BMO}}},
	\end{equation*}
	where $f_i^0 = {f_i}{\chi _{2B}}$.
\end{thm}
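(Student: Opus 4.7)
The plan is to mimic the proof scheme of Theorem \ref{thm2}, replacing all constant-exponent norm estimates by their variable-exponent counterparts. Fix a ball $B = B(x_0, r)$ and split each input as $f_i = f_i^0 + f_i^\infty$ with $f_i^0 = f_i \chi_{2B}$. By $m$-linearity we decompose
\[
T_{\prod \vec b}(\vec f)(x) = \sum_{(\alpha_1,\ldots,\alpha_m) \in \{0,\infty\}^m} T_{\prod \vec b}(f_1^{\alpha_1},\ldots,f_m^{\alpha_m})(x),
\]
isolate the purely local piece $T_{\prod \vec b}(\vec f^{\,0})$, and group the remaining $2^m - 1$ ``mixed'' pieces. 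For the local piece the $LB$-hypothesis on $T_{\prod \vec b}$ gives directly the bound $\prod_j \|b_j\|_{BMO} \prod_i \|f_i\|_{M^{p_i(\cdot),\varphi_{1i}}(\omega_i)}$ localised to $B$, so that the $M^{p(\cdot),\varphi_2}(v)$-norm of this piece will be controlled by $\varphi_2(x_0,r) \prod_j \|b_j\|_{BMO} \prod_i \|f_i\|_{M^{p_i(\cdot),\varphi_{1i}}(\omega_i)}$ after dividing by $\varphi_2(x_0,r)\|v\|_{L^{p(\cdot)}(B)}$ and taking supremum.

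For every mixed piece I will write each commutator factor as
\[
b_j(x) - b_j(y_j) = \bigl(b_j(x) - b_{j,2B}\bigr) + \bigl(b_{j,2B} - b_j(y_j)\bigr),
\]
expand the product, pull the $x$-dependent factors $\prod_{j \in S} (b_j(x) - b_{j,2B})$ outside the operator, and for each remaining subset absorb $(b_{j,2B} - b_j(y_j)) f_j(y_j)$ as a new ``input.'' Because at least one $\alpha_i = \infty$, the $LS$-hypothesis on $T$ applies on $B$, producing the annular sum
\[
\sum_{k = 1}^\infty \prod_{i = 1}^m \frac{1}{|2^{k+1} B|} \int_{2^{k+1} B} |b_{j,2B} - b_j(y_j)|^{\delta_{ij}} |f_i(y_i)|\,dy_i,
\]
where $\delta_{ij} \in \{0,1\}$ records where the BMO factors were absorbed. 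I will then estimate each annular average by the variable-exponent Hölder inequality against $\omega_i$ and $\omega_i^{-1}$, controlling $\|(b_j - b_{j,2B})^{\delta_{ij}} \omega_i^{-1}\|_{L^{p_i'(\cdot)}(2^{k+1}B)}$ through the variable-exponent John--Nirenberg inequality, which yields the customary factor $(1 + k)\|b_j\|_{BMO}$. Summing over $k$ and over all $\delta_{ij}$ arrangements produces a bound of the form
\[
\prod_j \|b_j\|_{BMO} \sum_{k=1}^\infty (1+k)^m \frac{\prod_i \|\omega_i\|_{L^{p_i(\cdot)}(2^{k+1}B)}}{\prod_i \|\omega_i\|_{L^{p_i(\cdot)}(B)}} \prod_i \|f_i\|_{M^{p_i(\cdot),\varphi_{1i}}(\omega_i)} \varphi_{1i}(x_0, 2^{k+1}r).
\]

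The next step converts the dyadic sum into an integral of the shape of (\ref{con6}); this is the standard discretisation trick using $\int_{2^k r}^{2^{k+1} r} dt/t \asymp 1$ combined with the fact that the essential infimum in (\ref{con6}) dominates $\prod_i \varphi_{1i}(x_0, 2^{k+1}r) \|\omega_i\|_{L^{p_i(\cdot)}(B(x_0,2^{k+1}r))}$. The condition $[\vec\varphi_1, \varphi_2]_2''< \infty$ then gives the $\varphi_2(x_0,r)$ factor needed to form the $M^{p(\cdot),\varphi_2}(v)$-norm after dividing by $\|v\|_{L^{p(\cdot)}(B)}$, and one finally takes the supremum over $B$. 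Collecting the local and mixed contributions yields both the qualitative boundedness and the quantitative estimate with constant $\prod_j \|b_j\|_{BMO}$.

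The principal obstacle will be the variable-exponent BMO control, namely establishing (or quoting from the $LH \cap \mathcal P$-framework) the inequality
\[
\|(b_j - b_{j,2B})^k \omega_i^{-1}\|_{L^{p_i'(\cdot)}(2^{k+1}B)} \lesssim (1+k)^k \|b_j\|_{BMO}^k \, \|\omega_i^{-1}\|_{L^{p_i'(\cdot)}(2^{k+1}B)},
\]
together with the matching $A_{p_i(\cdot)}$-duality identity relating $\|\omega_i\|_{L^{p_i(\cdot)}}$ and $\|\omega_i^{-1}\|_{L^{p_i'(\cdot)}}$; the $LH$-regularity is exactly what permits this. Once these two variable-exponent ingredients are in hand, the remaining manipulations are parallel to the constant-exponent proof of Theorem \ref{thm2}, with essential infima inside the integral handled as in the argument for Theorem \ref{thm9}.
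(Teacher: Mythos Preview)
Your overall architecture---split $f_i = f_i^0 + f_i^\infty$, handle the local piece by the $LB$-hypothesis, expand each mixed piece against the averages $(b_j)_{2B}$, and feed the $LS$-bound through variable-exponent H\"older and the $A_{p_i(\cdot)}$ condition---is exactly the paper's scheme, and the variable-exponent BMO ingredient you flag is precisely Lemma~\ref{cen6}.

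There is, however, a real gap in your final step. Once you insert the Morrey norm of $f_i$ at level $2^{k+1}r$ and arrive at a sum of the shape
\[
\varphi_2(x_0,r)^{-1}\sum_{k\ge 1}(1+k)^m\prod_i \varphi_{1i}(x_0,2^{k+1}r),
\]
you can no longer control it by condition~(\ref{con6}). The essential infimum in (\ref{con6}) is a \emph{lower} bound for $\prod_i \varphi_{1i}(x_0,\eta)\|\omega_i\|_{L^{p_i(\cdot)}(B(x_0,\eta))}$, not an upper bound, so your sentence ``the essential infimum in (\ref{con6}) dominates $\prod_i \varphi_{1i}\cdots$'' points in the wrong direction; the sum above corresponds to a strictly stronger hypothesis than (\ref{con6}). (Relatedly, the displayed bound with $\prod_i\|\omega_i\|_{L^{p_i(\cdot)}(B)}$ in the denominator does not arise from the estimates---$v$ is an arbitrary $A_{p(\cdot)}$ weight unrelated to $\prod_i\omega_i$.)

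The remedy, which is how the paper proceeds, is \emph{not} to pass to the Morrey norm inside the $k$-sum. Keep the Lebesgue norms $\|f_i\|_{L^{p_i(\cdot)}(B(x_0,t),\omega_i)}$, convert the dyadic sum to the integral
\[
\int_{2r}^\infty\Bigl(1+\log\tfrac{t}{r}\Bigr)^m\prod_i\|f_i\|_{L^{p_i(\cdot)}(B(x_0,t),\omega_i)}\|\omega_i\|_{L^{p_i(\cdot)}(B(x_0,t))}^{-1}\frac{dt}{t},
\]
and then apply the weighted Hardy-type Lemma~\ref{Gu3} with $g(t)=\prod_i\|f_i\|_{L^{p_i(\cdot)}(B(x_0,t),\omega_i)}$ (non-decreasing in $t$) and $v_1(\eta)^{-1}=\prod_i\varphi_{1i}(x_0,\eta)\|\omega_i\|_{L^{p_i(\cdot)}(B(x_0,\eta))}$. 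The Hardy lemma is exactly what produces the $\mathop{\mathrm{essinf}}$ and makes (\ref{con6}) the correct sufficient condition; bypassing it by early Morrey-norm insertion forfeits this.
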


\begin{rem}
If we take $T$ to be a bilinear Calderón–Zygmund operator with Dini kernel and ${\omega _i} \equiv v \equiv 1$, then the results of boundedness still hold in Theorem $\ref{thm9}, \ref{thm10}$, which have been proved in \cite{Xu,Huang-Xu}. If we take $T$ to be a Calderón–Zygmund operator with Dini kernel, then the results of boundedness still hold in Theorem $\ref{thm9}, \ref{thm10}$, which have also been proved in \cite{Gu5}.
\end{rem}

\begin{rem}
	In Theorem $\ref{thm2}, \ref{thm8}, \ref{thm10}$, If we replace ${T_{\prod \vec b }}$ with ${T_{\sum {\vec b} }}$, the results of boundedness still holds by modifying the conditions appropriately.
\end{rem}

\begin{cor}\label{cor1}
	In Theorem $\ref{thm1}$, set $T$ is an m-sublinear operator on $\prod\limits_{i = 1}^m {M^{{p_i},{\varphi _{1i}}}}\left( {{\omega _i}} \right)$. If for any $\vec f \in \prod\limits_{i = 1}^m {M^{{p_i},{\varphi _{1i}}}}\left( {{\omega _i}} \right)$ and $x \notin \bigcap\limits_{j = 1}^m {{\rm{supp}}{f_j}}$,
	\begin{equation}\label{con1}
	\left| {T(\vec f )}(x) \right| \lesssim \int_{\nm}^{} {\frac{{\left| {\prod\limits_{i = 1}^m {{f_i}({y_i})} } \right|}}{{{{(\sum\limits_{j = 1}^m {\left| {x - {y_j}} \right|} )}^{mn}}}}d\vec y }.
	\end{equation}
	\begin{enumerate}[(i)]
		\item If $\mathop {\min }\limits_{1 \le k \le m} \{ {p_k}\}  > 1$, then $$T \in LB\left( {\prod\limits_{i = 1}^m {{M^{{p_i},{\varphi _{1i}}}}\left( {{\omega _i}} \right)}  \to {M^{p,{\varphi _2}}}\left( {{v}} \right)} \right) \Leftrightarrow T \in B\left( {\prod\limits_{i = 1}^m {{M^{{p_i},{\varphi _{1i}}}}\left( {{\omega _i}} \right)}  \to {M^{p,{\varphi _2}}}\left( {{v}} \right)} \right);$$
		\item If $\mathop {\min }\limits_{1 \le k \le m} \{ {p_k}\}  = 1$, then $$T \in LB\left( {\prod\limits_{i = 1}^m {{M^{{p_i},{\varphi _{1i}}}}\left( {{\omega _i}} \right)}  \to W{M^{p,{\varphi _2}}}\left( v \right)} \right) \Leftrightarrow T \in B\left( {\prod\limits_{i = 1}^m {{M^{{p_i},{\varphi _{1i}}}}\left( {{\omega _i}} \right)}  \to W{M^{p,{\varphi _2}}}\left( v \right)} \right).$$
	\end{enumerate}
\end{cor}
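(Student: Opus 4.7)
\textbf{Proof plan for Corollary \ref{cor1}.} The equivalence is established in two directions. The implication $T \in B(\cdots) \Rightarrow T \in LB(\cdots)$ is direct: if $T$ is bounded from $\prod_{i=1}^m M^{p_i,\varphi_{1i}}(\omega_i)$ to $M^{p,\varphi_2}(v)$ (or to its weak analogue in case (ii)), then applying this boundedness to the truncated inputs $f_i^0 = f_i \chi_{2B}$ and using the monotonicity $\|f_i^0\|_{M^{p_i,\varphi_{1i}}(\omega_i)} \le \|f_i\|_{M^{p_i,\varphi_{1i}}(\omega_i)}$ (which follows from $|f_i^0| \le |f_i|$ and the definition of the Morrey norm) immediately yields (\ref{cen2}).

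For the reverse direction $T \in LB \Rightarrow T \in B$, I plan to invoke Theorem \ref{thm1}. Since Theorem \ref{thm1} requires $T \in LS \cap LB$, and $LB$ is already assumed, it suffices to deduce $T \in LS$ from the pointwise bound (\ref{con1}). Fix a ball $B = B(x_0,r)$, an index $1 \le \ell \le m$, and a point $x \in B$; set $g_i = f_i^\infty$ for $i \le \ell$ and $g_i = f_i^0$ for $i > \ell$. Since $\mathrm{supp}\, g_1 \subseteq (2B)^c$ while $x \in B$, the hypothesis $x \notin \bigcap_j \mathrm{supp}\, g_j$ is satisfied, so (\ref{con1}) gives
\begin{equation*}
|T(\vec g)(x)| \lesssim \int_{\nm} \frac{\prod_{i=1}^m |g_i(y_i)|}{\big(\sum_{j=1}^m |x-y_j|\big)^{mn}}\, d\vec y.
\end{equation*}
Decomposing $(2B)^c = \bigsqcup_{k\ge 1}(2^{k+1}B \setminus 2^k B)$, writing $f_i^\infty = \sum_{k_i\ge 1} f_i \chi_{2^{k_i+1}B \setminus 2^{k_i}B}$, and using $\sum_j |x-y_j| \gtrsim 2^K r$ when $K := \max_{i \le \ell} k_i$, a direct estimate yields
\begin{equation*}
|T(\vec g)(x)| \lesssim \sum_{K\ge 1} \frac{1}{(2^K r)^{mn}} \sum_{\max_i k_i = K} \prod_{i\le \ell} \|f_i\|_{L^1(2^{k_i+1}B\setminus 2^{k_i}B)} \prod_{i>\ell} \|f_i\|_{L^1(2B)}.
\end{equation*}
The inner combinatorial sum is bounded by $\ell \prod_{i\le\ell}\|f_i\|_{L^1(2^{K+1}B)}$, obtained by singling out which index realizes the maximum and collapsing the remaining sums telescopically. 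Combined with $(2^Kr)^{mn} \sim |2^{K+1}B|^m$ and $\|f_i\|_{L^1(2B)} \le \|f_i\|_{L^1(2^{K+1}B)}$, one arrives at
\begin{equation*}
|T(\vec g)(x)| \lesssim \sum_{K\ge 1} \prod_{i=1}^m \frac{1}{|2^{K+1}B|} \int_{2^{K+1}B} |f_i(y_i)|\, dy_i,
\end{equation*}
which is precisely (\ref{cen1}). Hence $T \in LS$, and Theorem \ref{thm1}, part (i) when $\min_k p_k > 1$ or part (ii) when $\min_k p_k = 1$, delivers $T \in B$.

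The main obstacle is the final combinatorial step: ensuring that the multilinear annular sum collapses to a single index $K$ with a multiplicity constant depending only on $m$, without any polynomial blow-up in $K$. This is handled by fixing the smallest $i^\ast$ with $k_{i^\ast} = K$ and telescoping the sums over the remaining $k_i \le K$ into integrals over $2^{K+1}B$, which yields the uniform overcount bound $\ell \le m$.
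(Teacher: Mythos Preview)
Your proof is correct and matches the paper's approach: the paper leaves Corollary~\ref{cor1} unproved, but the verification that the size bound~(\ref{con1}) forces $T\in LS$ is carried out in the proof of Theorem~\ref{thm13} (see~(\ref{ieq2})), after which Theorem~\ref{thm1} is invoked exactly as you propose. The one simplification the paper uses is to decompose the product domain directly as $((2B)^c)^\ell\subset\bigcup_{j\ge 1}\big[(2^{j+1}B)^\ell\setminus(2^jB)^\ell\big]$, which bypasses your coordinate-wise annuli and the max-index combinatorics: on each product shell at least one $y_i\notin 2^jB$, so the kernel bound $(\sum_i|x-y_i|)^{-mn}\lesssim|2^{j+1}B|^{-m}$ is immediate and the integral over the shell is trivially dominated by $\prod_{i}\int_{2^{j+1}B}|f_i|$.
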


\begin{cor}\label{cor2}
	In Theorem $\ref{thm2}$, set $T$ is an m-sublinear operator on $\prod\limits_{i = 1}^m {M^{{p_i},{\varphi _{1i}}}}\left( {{\omega _i}} \right)$, which satisfies $(\ref{con1})$. If $\vec b \in {\left( {BMO} \right)^m}$, then 
	\begin{equation}
	{{T_{\prod \vec b }}}({T_{\sum {\vec b} }}) \in LB\left( {\prod\limits_{i = 1}^m {{M^{{p_i},{\varphi _{1i}}}}\left( {{\omega _i}} \right)}  \to {M^{p,{\varphi _2}}}\left( {{v}} \right)} \right) \Leftrightarrow {{T_{\prod \vec b }}}({T_{\sum {\vec b} }}) \in B\left( {\prod\limits_{i = 1}^m {{M^{{p_i},{\varphi _{1i}}}}\left( {{\omega _i}} \right)}  \to {M^{p,{\varphi _2}}}\left( {{v}} \right)} \right).
	\end{equation}
\end{cor}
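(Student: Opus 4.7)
The plan is to prove both directions separately. The $(\Leftarrow)$ direction is essentially routine: if $T_{\prod \vec b}$ is globally bounded from $\prod_{i=1}^m M^{p_i,\varphi_{1i}}(\omega_i)$ to $M^{p,\varphi_2}(v)$, then applying the hypothesis to the truncated tuple $(f_1^0,\ldots,f_m^0)$ with $f_i^0 = f_i\chi_{2B}$ yields
\[
\|T_{\prod \vec b}(f_1^0,\ldots,f_m^0)\|_{M^{p,\varphi_2}(v)} \lesssim \prod_{i=1}^m \|f_i^0\|_{M^{p_i,\varphi_{1i}}(\omega_i)} \le \prod_{i=1}^m \|f_i\|_{M^{p_i,\varphi_{1i}}(\omega_i)},
\]
since multiplication by a characteristic function is a contraction on the generalized weighted Morrey spaces. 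This is precisely the $LB$ condition in Definition 8; the identical argument works verbatim for $T_{\sum \vec b}$.

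For the $(\Rightarrow)$ direction, the strategy is to upgrade the size condition (\ref{con1}) on $T$ to the local size condition $T \in LS(\prod_{i=1}^m M^{p_i,\varphi_{1i}}(\omega_i))$, and then invoke Theorem \ref{thm2} (respectively its $T_{\sum \vec b}$-analog indicated in the remark following Theorem \ref{thm10}). Fix a ball $B = B(x_0,r)$, a point $x \in B$, an index $1 \le \ell \le m$, and set $f_i^\infty = f_i\chi_{(2B)^c}$ for $i \le \ell$ and $f_i^0 = f_i\chi_{2B}$ for $i > \ell$. I decompose the domain of integration dyadically: writing $A_0 = 2B$ and $A_j = 2^{j+1}B \setminus 2^jB$ for $j \ge 1$, the supports of the factors force $j_i \ge 1$ for $i \le \ell$ and $j_i = 0$ for $i > \ell$ in the annular product $\prod_i A_{j_i}$. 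On such a product, $\sum_{k=1}^m |x - y_k| \sim 2^{\max_i j_i}r$, so (\ref{con1}) gives
\[
\bigl|T(f_1^\infty,\ldots,f_\ell^\infty,f_{\ell+1}^0,\ldots,f_m^0)(x)\bigr| \lesssim \sum_{\vec j} \frac{1}{(2^{\max_i j_i}r)^{mn}} \prod_{i=1}^m \int_{A_{j_i}} |f_i(y_i)|\,dy_i.
\]
Collapsing the multi-index sum onto its dominant coordinate via the geometric bound $\sum_{j_i \le j} \int_{2^{j_i+1}B}|f_i| \lesssim |2^{j+1}B|\,\frac{1}{|2^{j+1}B|}\int_{2^{j+1}B}|f_i|$ yields exactly (\ref{cen1}), establishing $T \in LS$. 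Combined with the assumed $LB$ condition on $T_{\prod \vec b}$ (or $T_{\sum \vec b}$), Theorem \ref{thm2} (or its sum-commutator variant) then delivers the global boundedness.

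The main obstacle is the dyadic bookkeeping that produces the $LS$ bound from (\ref{con1}): the pointwise kernel estimate is symmetric in the $y_i$'s and carries a single combined denominator, while the $LS$ condition is stated as a single geometric sum over one annulus index, so one has to identify the dominant annulus, absorb the other factors into averages, and sum the resulting geometric series — a purely combinatorial reduction with no new analytic input. Once this reduction is in place, the corollary is a direct consequence of Theorem \ref{thm2} and Remark 1.4.
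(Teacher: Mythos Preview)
Your proposal is correct and follows the paper's intended approach: the paper gives no separate proof of this corollary, but the non-trivial step --- deriving $T \in LS$ from the size condition \eqref{con1} --- is exactly the computation carried out in \eqref{ieq2} within the proof of Theorem~\ref{thm13}, after which Theorem~\ref{thm2} (and its $T_{\sum\vec b}$-variant) applies. The only cosmetic difference is that the paper uses the single-index shell decomposition $(2^{j+1}B)^\ell\setminus(2^jB)^\ell$ rather than your product of individual annuli, which bypasses the multi-index collapse you describe.
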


\begin{rem}
The above corollaries still holds for generalized local weighted Morrey spaces and generalized weighted variable exponent Morrey spaces. In other words, boundedness on the three kinds of generalized weighted Morrey spaces is equivalent to local boundedness for $T, {{T_{\prod \vec b }}}$ and ${T_{\sum {\vec b} }}$, which satisfies $(\ref{con1})$.
\end{rem}

The organization of this paper is as follows. In section $\ref{sec2}$, we prepare some definitions and preliminary lemmas, which play a fundamental role in this paper. Section $\ref{sec3}$ is the proofs of our main results. In Section $\ref{sec4}$, we present the boundedness results for some classical multilinear operators and their commutators as applications.

Throughout this article, we let $C$ denote constants that are independent of the main parameters involved but whose value may differ from line to line. Given a ball $B$ and $\lambda>0$, $\lambda B$ denotes the ball with the same center as $B$ whose radius is $\lambda$ times that of $B$. A weight function $\omega$ is a nonnegative locally integrable function on $\mathbb R^n$ that takes values in $(0,\infty)$ almost everywhere. For a given weight function $\omega$ and a measurable set $E$, we denote the Lebesgue measure of $E$ by $|E|$ and the weighted measure of $E$ by $\omega(E)$, where $\omega(E)=\int_E \omega(x)\,dx$.
By $A\lesssim B$ we mean that $A\leq CB$ with some positive constant $C$ independent of appropriate quantities.
By $ A \approx B$, we mean that $A\lesssim B$ and $B\lesssim A$.
	\section{Some Notation and Basic Results}\label{sec2}

First let us recall some standard definitions and notations.

The classical $A_p$ weight
theory was introduced by Muckenhoupt in the study of weighted
$L^p$ boundedness of Hardy-Littlewood maximal functions, one can see Chapter 7 in \cite{Gra1}.
\begin{defn}[\cite{Gra1}]
	We denote the ball with the center $x_0$ and radius $r$ by $B=B(x_0,r)$, we say that a weight $\omega\in A_p$,\,$1<p<\infty$, if
	$$\left(\frac1{|B|}\int_B \omega(x)\,dx\right)\left(\frac1{|B|}\int_B \omega(x)^{-\frac{1}{p-1}}\,dx\right)^{p-1}\le C \quad\mbox{for every ball}\; B\subseteq \mathbb
	R^n,$$ where $C$ is a positive constant which is independent of $B$.\\
	We say a weight $\omega\in A_1$, if
	$$\frac1{|B|}\int_B \omega(x)\,dx\le C\mathop {ess\inf }\limits_{x \in B} \omega(x)\quad\mbox{for every ball}\;B\subseteq\mathbb R^n.$$
	We denote $${A_\infty } = \bigcup\limits_{1 \le p < \infty } {{A_p}}.$$
\end{defn}

Now let us recall the definitions of multiple weights. 
\begin{defn}[\cite{Lerner}]
 Let $p_1,\ldots,p_m\in[1,\infty)$ and $p\in(0,\infty)$ with $1/p=\sum_{k=1}^m 1/{p_k}$. Given $\vec{\omega}=(\omega_1,\ldots,\omega_m)$, set $v_{\vec{\omega}}=\prod_{i=1}^m \omega_i^{p/{p_i}}$. We say that $\vec{\omega}$ satisfies the $A_{\vec{P}}$ condition if it satisfies
	\begin{equation}
	\sup_B\left(\frac{1}{|B|}\int_B v_{\vec{\omega}}(x)\,dx\right)^{1/p}\prod_{i=1}^m\left(\frac{1}{|B|}\int_B \omega_i(x)^{1-{{p_i}^\prime }}\,dx\right)^{1/{{{p_i}^\prime }}}<\infty.
	\end{equation}
	when $p_i=1,$ $\left(\frac{1}{|B|}\int_B \omega_i(x)^{1-{{p_i}^\prime }}\,dx\right)^{1/{{p_i}^\prime }}$ is understood as ${(\mathop {\inf }\limits_{x \in B} {\omega _i}(x))^{ - 1}}$.
\end{defn}

\begin{lem}[\cite{Lerner}]\label{lem4}
	Let $p_1,\ldots,p_m\in[1,+\infty)$ and $1/p=\sum_{i=1}^m 1/{p_i}$. Then $\vec{\omega}\in A_{\vec{P}}$ if and only if
	\begin{equation}\label{multi2}
	\left\{
	\begin{aligned}
	&v_{\vec{\omega}}\in A_{mp},\\
	&\omega_i^{1-{{p_i}^\prime }}\in A_{m{{p_i}^\prime }},\quad i=1,\ldots,m,
	\end{aligned}\right.
	\end{equation}
	where $v_{\vec{\omega}}=\prod_{i=1}^m \omega_i^{p/{p_i}}$ and the condition $\omega_i^{1-{{p_i}^\prime }}\in A_{m{{p_i}^\prime }}$ in the case $p_i=1$ is understood as $\omega_i^{1/m}\in A_1$.
\end{lem}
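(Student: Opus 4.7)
The plan is to establish this classical characterization of multilinear weights by combining Hölder's inequality with careful exponent bookkeeping. I will set up convenient notation: write $\sigma_i := \omega_i^{1-p_i'}$ when $p_i > 1$, so that $\omega_i = \sigma_i^{1-p_i}$ and the $A_{\vec P}$ quantity becomes
$$[\vec\omega]_{A_{\vec P}} = \sup_B \left(\frac{1}{|B|}\int_B v_{\vec\omega}\right)^{1/p} \prod_{i=1}^m \left(\frac{1}{|B|}\int_B \sigma_i\right)^{1/p_i'}.$$
Since $\sum_{i=1}^m \frac{1}{p_i'} = m - \frac{1}{p} = \frac{mp-1}{p}$, the total exponent on the $\sigma_i$ factors complements that of $v_{\vec\omega}$ in a way that precisely matches the exponents appearing in the definition of $A_{mp}$, which is the structural observation driving the whole argument.

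For the sufficiency direction, assuming $v_{\vec\omega} \in A_{mp}$ and $\sigma_i \in A_{mp_i'}$ for each $i$, I would apply Hölder's inequality with exponents $\{p_i/p\}_{i=1}^m$ (which sum to $1$) to bound
$$\frac{1}{|B|}\int_B v_{\vec\omega} \leq \prod_{i=1}^m \left(\frac{1}{|B|}\int_B \omega_i\right)^{p/p_i},$$
and then combine this with the individual $A_{mp_i'}$ conditions on the $\sigma_i$ (which control $\frac{1}{|B|}\int_B \omega_i$ against powers of $\frac{1}{|B|}\int_B \sigma_i$) to recover control of the $A_{\vec P}$ constant in terms of the product of the individual constants.

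For the necessity direction, assume $\vec\omega \in A_{\vec P}$. To derive $v_{\vec\omega} \in A_{mp}$, I would write $v_{\vec\omega}^{-1/(mp-1)} = \prod_i \omega_i^{-p/(p_i(mp-1))}$ and apply Hölder with exponents $\{(mp-1)p_i'/p\}_{i=1}^m$ (which sum to $1$ using $\sum_i 1/p_i' = (mp-1)/p$) to express the average of $v_{\vec\omega}^{-1/(mp-1)}$ as a product of averages of $\sigma_i$; multiplying by $\frac{1}{|B|}\int_B v_{\vec\omega}$ and invoking the $A_{\vec P}$ hypothesis then yields the desired $A_{mp}$ bound. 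For $\sigma_i \in A_{mp_i'}$ at a fixed index $i$, a parallel computation isolates the $i$-th factor and applies Hölder to the remaining $m-1$ factors with exponents derived from $\sum_{j \neq i} 1/p_j'$, after first rewriting $\omega_i$ in terms of $\sigma_i$ inside the $v_{\vec\omega}$ factor.

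The main obstacle I anticipate is the exponent bookkeeping: each Hölder step requires verifying that a specific collection of exponents sums to $1$, and the resulting powers on the averages must match the definition of the target $A_q$ class exactly. A secondary subtlety is the endpoint case $p_i = 1$, where $\sigma_i$ is undefined in the usual sense; here the correct interpretation becomes $\omega_i^{1/m} \in A_1$, and this case can be handled either by replacing the corresponding factor by the essential infimum of $\omega_i^{-1}$ over $B$ directly throughout the computation, or via a limiting argument that passes from $p_i > 1$ to $p_i \to 1^+$ using monotone convergence.
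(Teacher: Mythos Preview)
The paper does not prove this lemma: it is quoted with a citation to \cite{Lerner} and used as a known result, so there is no in-paper argument to compare against. Your outline is in fact the strategy of the original proof in that reference---H\"older's inequality driven by the exponent identity $\sum_{i=1}^m 1/p_i' = m - 1/p$---and your necessity direction is correctly sketched.

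Your sufficiency sketch, however, has a gap. After the H\"older step $\frac{1}{|B|}\int_B v_{\vec\omega} \le \prod_i \bigl(\frac{1}{|B|}\int_B \omega_i\bigr)^{p/p_i}$ you propose to invoke the $A_{mp_i'}$ condition on $\sigma_i$ to control $\frac{1}{|B|}\int_B \omega_i$ by powers of $\frac{1}{|B|}\int_B \sigma_i$. But the dual weight in the $A_{mp_i'}$ condition for $\sigma_i$ is $\sigma_i^{-1/(mp_i'-1)} = \omega_i^{(p_i'-1)/(mp_i'-1)}$, which for $m>1$ is not $\omega_i$, so that control is not directly available. More tellingly, your sufficiency argument never uses the hypothesis $v_{\vec\omega}\in A_{mp}$. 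In the original proof the sufficiency direction combines \emph{all} $m+1$ assumed conditions: one raises each to an appropriate power and multiplies, then applies H\"older (essentially the reverse of the factorization you used for necessity) so that the $v_{\vec\omega}\in A_{mp}$ hypothesis and the $\sigma_i\in A_{mp_i'}$ hypotheses together reproduce the $A_{\vec P}$ quantity. Revisiting the sufficiency step with this in mind will close the gap; the endpoint $p_i=1$ is then handled exactly as you describe, replacing the $\sigma_i$ average by $(\operatorname{ess\,inf}_B \omega_i)^{-1}$.
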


\begin{lem}\label{cen4}
	Let $m\in \n,$ $p_1,\ldots,p_m\in[1,\infty)$ with $1/p=\sum_{k=1}^m 1/{p_k}$. Assume that $\omega_1,\ldots,\omega_m\in A_\infty$ and $v_{\vec{\omega}}=\prod_{i=1}^m \omega_i^{p/{p_i}}\in A_\infty$, then for any ball $B,$ we have
	\begin{equation*}\label{N1}
		\prod\limits_{i = 1}^m {{{\left( {\int_B {{\omega _i}} (x){\mkern 1mu} dx} \right)}^{p/{p_i}}}}  \approx \int_B {{v _{\vec \omega }}} (x){\mkern 1mu} dx.
	\end{equation*}
\begin{proof}[Proof:]
	Using Jensen's inequality and the definition of ${A_\infty }$ which can be found in \cite[p. 12]{Gra1} and \cite[p. 525]{Gra1}, we get
	\begin{equation*}
	\left| B \right|\exp (\frac{1}{{\left| B \right|}}\int_B {\log \omega _i^{{q_i}}} ) \le \omega _i^{{q_i}}(B) \lesssim \left| B \right|\exp (\frac{1}{{\left| B \right|}}\int_B {\log \omega _i^{{q_i}}} ).
	\end{equation*}
	and then we have
	\begin{align*}
	\prod\limits_{i = 1}^m {\omega _i^{{q_i}}{{(B)}^{\frac{q}{{{q_i}}}}}}  \approx \left| B \right|\exp (\frac{1}{{\left| B \right|}}\int_B {\log u_{\vec \omega }^q} ) \approx u_{\vec \omega }^q(B).
	\end{align*}
\end{proof}
\end{lem}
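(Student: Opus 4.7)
The plan is to exploit the classical characterization of $A_\infty$ weights via the reverse Jensen inequality: for any $w \in A_\infty$ and any ball $B \subseteq \rn$,
\begin{equation*}
\frac{1}{|B|}\int_B w(x)\,dx \;\approx\; \exp\!\left(\frac{1}{|B|}\int_B \log w(x)\,dx\right).
\end{equation*}
The inequality $\ge$ is standard Jensen (concavity of $\log$), while the opposite bound is one of the equivalent definitions of the $A_\infty$ class. This two-sided estimate can be applied to each factor $\omega_i$ as well as to the product weight $v_{\vec\omega}$, thanks to the hypothesis $\omega_1,\ldots,\omega_m, v_{\vec\omega} \in A_\infty$.

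Carrying out the calculation, first I would apply the reverse Jensen equivalence to each $\omega_i$ and then raise both sides to the power $p/p_i$, which produces
\begin{equation*}
\left(\int_B \omega_i\right)^{p/p_i} \;\approx\; |B|^{p/p_i}\exp\!\left(\frac{1}{|B|}\int_B \log \omega_i^{p/p_i}\right),
\end{equation*}
since pulling the exponent inside the logarithm costs nothing. Taking the product over $i=1,\ldots,m$ collects the exponentials into a single one whose argument is $\frac{1}{|B|}\int_B \log\!\big(\prod_i \omega_i^{p/p_i}\big) = \frac{1}{|B|}\int_B \log v_{\vec\omega}$, and gathers the volume factors into $|B|^{\sum_i p/p_i} = |B|^{1}$, using the defining relation $1/p = \sum_i 1/p_i$.

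Finally, I would apply the reverse Jensen equivalence a second time, now in the opposite direction and to the weight $v_{\vec\omega} \in A_\infty$, to convert $|B|\exp\!\big(\frac{1}{|B|}\int_B \log v_{\vec\omega}\big)$ back into $\int_B v_{\vec\omega}$. Concatenating the two comparisons yields the desired two-sided bound. There is no real obstacle in the argument; the only point requiring care is that both directions of the $A_\infty$ characterization are invoked (one for each $\omega_i$, one for $v_{\vec\omega}$), and that the algebraic identity $\sum_i p/p_i = 1$ is precisely what makes the $|B|$-powers consolidate to $|B|$, so that no spurious factor of $|B|$ remains.
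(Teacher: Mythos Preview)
Your proposal is correct and follows essentially the same route as the paper's proof: both rely on the $A_\infty$ reverse-Jensen characterization $\frac{1}{|B|}\int_B w \approx \exp\!\big(\tfrac{1}{|B|}\int_B \log w\big)$, applied first to each $\omega_i$ and then to $v_{\vec\omega}$, with the exponent relation $\sum_i p/p_i = 1$ collapsing the $|B|$-powers. Your writeup is in fact cleaner than the paper's, which contains some notational slips (writing $q_i,q,u_{\vec\omega}$ in place of $p_i,p,v_{\vec\omega}$ and using the ambiguous symbol $\omega_i^{q_i}(B)$).
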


Next, we introduce some Lemmas for $BMO$ spaces. 

\begin{lem}[\cite{Gra2}]
	For all $p \in [1,\infty )$ and $b \in L_{loc}^1(\rn)$, we have
	\begin{equation*}
		\mathop {\sup }\limits_B {(\frac{1}{{\left| B \right|}}\int_B^{} {{{\left| {b(x) - {b_B}} \right|}^p}dx} )^{\frac{1}{p}}} \approx {\left\| b \right\|_{BMO}}:=\mathop {\sup }\limits_B (\frac{1}{{\left| B \right|}}\int_B^{} {\left| {b(x) - {b_B}} \right|dx} ).
	\end{equation*}
\end{lem}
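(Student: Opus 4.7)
The claim is the standard $L^p$--$L^1$ equivalence of the $BMO$ seminorm. One direction is immediate from Hölder's inequality: for any $p\in[1,\infty)$ and any ball $B$,
\[
\frac{1}{|B|}\int_B |b(x)-b_B|\,dx \;\le\; \left(\frac{1}{|B|}\int_B |b(x)-b_B|^p\,dx\right)^{1/p},
\]
so taking the supremum over $B$ yields $\|b\|_{BMO} \le \sup_B (|B|^{-1}\int_B |b-b_B|^p)^{1/p}$, which is the easy inequality $\|b\|_{BMO}\lesssim$ LHS.

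The forward-looking step is the reverse inequality. The plan is to invoke the John--Nirenberg inequality, which says there exist absolute constants $C_1,C_2>0$ such that for every ball $B$ and every $\lambda>0$,
\[
\bigl|\{x\in B:|b(x)-b_B|>\lambda\}\bigr| \;\le\; C_1\,|B|\,\exp\!\left(-\frac{C_2\lambda}{\|b\|_{BMO}}\right).
\]
From this, I would use the layer-cake representation
\[
\int_B |b(x)-b_B|^p\,dx \;=\; p\int_0^\infty \lambda^{p-1}\bigl|\{x\in B:|b(x)-b_B|>\lambda\}\bigr|\,d\lambda
\]
and substitute the John--Nirenberg bound. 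The resulting integral $\int_0^\infty \lambda^{p-1}e^{-C_2\lambda/\|b\|_{BMO}}\,d\lambda$ is a gamma integral, equal to $\Gamma(p)(\|b\|_{BMO}/C_2)^p$. Dividing by $|B|$, taking the $p$-th root, and then the supremum over $B$, one obtains
\[
\sup_B\left(\frac{1}{|B|}\int_B |b(x)-b_B|^p\,dx\right)^{1/p} \;\lesssim\; (pC_1\Gamma(p))^{1/p}\,C_2^{-1}\,\|b\|_{BMO},
\]
which is the nontrivial direction.

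The combination of the two directions gives the asserted equivalence with implicit constants depending only on $p$ and the dimension $n$. Since this is a well-known classical result and the author cites \cite{Gra2}, I would present only a brief sketch emphasizing the two steps above: Hölder for the trivial inequality and John--Nirenberg followed by the layer-cake computation for the nontrivial one. The only real obstacle is having the John--Nirenberg inequality at hand, but that is the standard tool attached to the $BMO$ space and is assumed available from the reference.
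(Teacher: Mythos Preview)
The paper does not supply a proof of this lemma; it simply cites \cite{Gra2} (Grafakos, \emph{Modern Fourier Analysis}) and states the result without argument. Your sketch is the standard and correct proof: H\"older's inequality gives the trivial direction, and the John--Nirenberg inequality combined with the layer-cake formula yields the reverse inequality, with the gamma-integral computation producing the $p$-dependent constant. This is precisely the argument found in the cited reference, so your approach is in full agreement with what the paper is implicitly invoking.
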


\begin{lem}[\cite{Gu1}]\label{Gu1}
	Let $\omega\in A_{\infty}$ and $b \in BMO$. Then for any $p \in [1,\infty )$, $r_1, r_2 >0$, we have
	\begin{equation*}
	{\left( {\frac{1}{{\omega \left( {B({x_0},{r_1})} \right)}}\int_{B({x_0},{r_1})} {{{\left| {b(x) - {b_{B({x_0},{r_2})}}} \right|}^p}\omega (x)dx} } \right)^{\frac{1}{p}}} \lesssim {\left\| b \right\|_{BMO}}\left( {1 + \left| {\log \frac{{{r_1}}}{{{r_2}}}} \right|} \right).
	\end{equation*}
\end{lem}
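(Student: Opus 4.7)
The plan is to reduce the weighted $L^p$ estimate to two pieces via the triangle inequality
\[
\left|b(x) - b_{B(x_0,r_2)}\right| \le \left|b(x) - b_{B(x_0,r_1)}\right| + \left|b_{B(x_0,r_1)} - b_{B(x_0,r_2)}\right|,
\]
and then to bound each piece separately. Plugging into the weighted integral and using Minkowski yields
\[
\left(\frac{1}{\omega(B(x_0,r_1))}\int_{B(x_0,r_1)} |b(x)-b_{B(x_0,r_2)}|^p \omega \, dx\right)^{1/p} \lesssim I + II,
\]
with $I$ the weighted $L^p$ oscillation of $b$ over $B(x_0,r_1)$ against its own average and $II = |b_{B(x_0,r_1)} - b_{B(x_0,r_2)}|$.

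For $I$, the plan is to combine the John--Nirenberg inequality with the $A_\infty$ property of $\omega$. John--Nirenberg supplies exponential decay of the Lebesgue measure of level sets,
\[
|\{x\in B : |b(x)-b_B|>\lambda\}| \le C_1 |B| \exp(-C_2\lambda/\|b\|_{BMO}).
\]
Since $\omega\in A_\infty$, there exist $C,\delta>0$ such that $\omega(E)/\omega(B) \le C(|E|/|B|)^\delta$ for every measurable $E\subseteq B$. Composing these two, the weighted level sets decay exponentially as well, and then the layer-cake formula $\int_B|b-b_B|^p \omega\,dx = p\int_0^\infty \lambda^{p-1}\omega(\{|b-b_B|>\lambda\}\cap B)\,d\lambda$ produces the bound $I \lesssim \|b\|_{BMO}$.

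For $II$, the plan is the standard telescoping-balls argument. Assume WLOG $r_1\le r_2$, pick $N$ with $2^N r_1 \le r_2 < 2^{N+1}r_1$ so $N\approx\log(r_2/r_1)$, and write
\[
|b_{B(x_0,r_1)} - b_{B(x_0,r_2)}| \le \sum_{k=0}^{N-1} |b_{B(x_0,2^k r_1)} - b_{B(x_0,2^{k+1} r_1)}| + |b_{B(x_0,2^N r_1)} - b_{B(x_0,r_2)}|.
\]
Each consecutive difference is bounded by an absolute constant times $\|b\|_{BMO}$ (two nested balls of comparable radii), so summing gives $II \lesssim \|b\|_{BMO}(1+\log(r_2/r_1))$; the case $r_1>r_2$ is symmetric, hence the $|\log(r_1/r_2)|$. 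Adding $I$ and $II$ closes the argument.

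The main obstacle is the first piece: producing the weighted $L^p$ bound on BMO oscillations against a general $A_\infty$ weight, which is exactly where the reverse-H\"older / $A_\infty$ decay for weighted measure of subsets must be matched cleanly with John--Nirenberg. Everything else (triangle inequality, telescoping, layer cake) is routine.
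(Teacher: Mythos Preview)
Your proposal is correct and follows the standard argument for this well-known inequality. Note, however, that the paper does not actually prove this lemma: it is stated with a citation to \cite{Gu1} and used as a black box, so there is no ``paper's own proof'' to compare against. Your two-step decomposition (weighted John--Nirenberg for the local oscillation, telescoping for the difference of averages) is precisely the approach one finds in the original reference and in standard treatments of weighted BMO estimates.
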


\begin{lem}[\cite{Gra2}, p.166, exercises 3.1.5]\label{Gu5}
Let $b \in BMO$. For any $l \in (1,\infty )$, if $\frac{t}{r} \ge l$, then there exist two constants ${C_{n}}, {C_{n,l}}>0$, such that
\begin{equation*}
\left| {{b_{B(x,t)}} - {b_{B(x,r)}}} \right| \le {C_n}{\left\| b \right\|_{BMO}}\log (1 + \frac{t}{r}) \le {C_{n,l}}{\left\| b \right\|_{BMO}}\log \frac{t}{r}.
\end{equation*}
\end{lem}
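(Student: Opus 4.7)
The plan is a standard dyadic telescoping argument. First I would establish the one-step estimate: for any ball $B \subset \mathbb{R}^n$,
\begin{equation*}
|b_B - b_{2B}| \le \frac{1}{|B|}\int_B |b(y) - b_{2B}|\,dy \le \frac{|2B|}{|B|} \cdot \frac{1}{|2B|}\int_{2B} |b(y) - b_{2B}|\,dy \le 2^n \|b\|_{BMO},
\end{equation*}
which uses only Jensen's inequality and $|2B|/|B|=2^n$. The same reasoning applied to any pair of concentric balls $B_1 \subset B_2$ with $|B_2|/|B_1|\le 2^n$ yields $|b_{B_1} - b_{B_2}| \le 2^n \|b\|_{BMO}$.

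Since $t/r \ge l > 1$, I would choose $N \in \mathbb{Z}_{\ge 0}$ with $2^N \le t/r < 2^{N+1}$, so $0 \le N \le \log_2(t/r)$. Telescoping along the chain $B(x,r) \subset B(x,2r) \subset \cdots \subset B(x,2^N r) \subset B(x,t)$, split
\begin{equation*}
|b_{B(x,r)} - b_{B(x,t)}| \le \sum_{k=0}^{N-1} |b_{B(x,2^k r)} - b_{B(x,2^{k+1} r)}| + |b_{B(x,2^N r)} - b_{B(x,t)}|,
\end{equation*}
bounding each of the first $N$ summands by $2^n \|b\|_{BMO}$ via the base step, and the last summand by $2^n \|b\|_{BMO}$ via the generalized comparison, noting that $|B(x,t)|/|B(x,2^N r)| = (t/(2^N r))^n < 2^n$. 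This produces $|b_{B(x,r)} - b_{B(x,t)}| \le 2^n (N+1)\|b\|_{BMO}$.

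Finally I would convert the integer count to a logarithm. Since $t/r > 1$ forces $1 + t/r > 2$, we have $\log_2(1 + t/r) \ge 1$, hence $N+1 \le 1 + \log_2(t/r) \le 2 \log_2(1 + t/r)$, giving the first inequality with $C_n = 2^{n+1}/\log 2$, a purely dimensional constant. For the second inequality I would observe that $\psi(x) := \log(1+x)/\log(x)$ is decreasing on $(1,\infty)$ with $\psi(x) \to 1$ as $x \to \infty$, so on $\{x \ge l\}$ it attains its maximum at $x = l$; thus $\log(1 + t/r) \le \psi(l)\log(t/r)$, and absorbing $\psi(l)$ yields $C_{n,l} = C_n \psi(l)$. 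The argument is essentially bookkeeping, with no analytic difficulty; the only care needed is selecting $N$ correctly to control the endpoint pair $B(x,2^N r) \subset B(x,t)$ and verifying the elementary logarithm inequalities, both of which are routine.
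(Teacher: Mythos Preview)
Your argument is correct and is precisely the standard dyadic telescoping proof of this classical BMO oscillation estimate. Note that the paper does not actually supply a proof of this lemma; it simply cites it as an exercise from Grafakos's \emph{Modern Fourier Analysis} (reference \cite{Gra2}, p.~166), so there is no ``paper's own proof'' to compare against. Your write-up is exactly the argument one would expect: the one-step doubling bound $|b_B-b_{2B}|\le 2^n\|b\|_{BMO}$, a telescoping chain of length $N+1$ with $N=\lfloor\log_2(t/r)\rfloor$, and the elementary logarithm inequalities $N+1\le 2\log_2(1+t/r)$ and $\log(1+x)\le\psi(l)\log x$ for $x\ge l$. All steps check out, including the endpoint comparison $B(x,2^Nr)\subset B(x,t)$ and the monotonicity of $\psi$.
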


\begin{defn}[\cite{Gu5}]
	We define the weighted variable exponent BMO spaces by
	\begin{equation*}
	BM{O_{p( \cdot ),\omega }} = \{ b:{\left\| b \right\|_{BM{O_{p( \cdot ),\omega }}}}: = \mathop {\sup }\limits_{x \in \rn,r > 0} \frac{{{{\left\| {(b( \cdot ) - {b_{B(x,r)}}){\chi _{B(x,r)}}} \right\|}_{{L^{p( \cdot )}}(\omega )}}}}{{{{\left\| {{\chi _{B(x,r)}}} \right\|}_{{L^{p( \cdot )}}(\omega )}}}}<\infty\}.
	\end{equation*}
\end{defn}

\begin{lem}[\cite{Gu5}]\label{Gu4}
	Let $p( \cdot ) \in LH \cap {\mathcal P}$. If $\omega  \in {A_{p( \cdot )}}$, then 
	${\left\| b \right\|_{BMO}} \approx {\left\| b \right\|_{BM{O_{p( \cdot ),\omega }}}}$.Moreover, we have $${\left\| {b( \cdot ) - {b_{B(x,r)}}} \right\|_{{L^{p( \cdot )}}(B(x,r),\omega dx)}} \lesssim {\left\| b \right\|_{BMO}}{\left\| \omega  \right\|_{{L^{p( \cdot )}}(B(x,r))}}.$$
\end{lem}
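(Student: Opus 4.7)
The plan is to prove the equivalence $\|b\|_{BMO}\approx\|b\|_{BMO_{p(\cdot),\omega}}$, from which the ``moreover'' inequality follows by unpacking definitions. The proof splits into two one-sided estimates, each keyed to a different hypothesis: the forward direction uses $[\omega]_{A_{p(\cdot)}}<\infty$, while the reverse direction uses the classical John--Nirenberg inequality together with the $LH$ regularity of $p(\cdot)$.

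For $\|b\|_{BMO}\lesssim\|b\|_{BMO_{p(\cdot),\omega}}$, I fix a ball $B$ and apply the generalized Hölder inequality in variable-exponent Lebesgue spaces:
$$\int_B|b(y)-b_B|\,dy = \int_B \omega(y)|b(y)-b_B|\cdot\omega(y)^{-1}\,dy \lesssim \|\omega(b-b_B)\chi_B\|_{L^{p(\cdot)}}\|\omega^{-1}\chi_B\|_{L^{p'(\cdot)}}.$$
Recognizing $\|\omega(b-b_B)\chi_B\|_{L^{p(\cdot)}}=\|(b-b_B)\chi_B\|_{L^{p(\cdot)}(\omega)}$ and $\|\chi_B\|_{L^{p(\cdot)}(\omega)}=\|\omega\|_{L^{p(\cdot)}(B)}$, I divide by $|B|$ and invoke the $A_{p(\cdot)}$ bound $|B|^{-1}\|\omega\|_{L^{p(\cdot)}(B)}\|\omega^{-1}\|_{L^{p'(\cdot)}(B)}\le[\omega]_{A_{p(\cdot)}}$ to conclude
$$\frac{1}{|B|}\int_B|b(y)-b_B|\,dy \lesssim [\omega]_{A_{p(\cdot)}}\,\frac{\|(b-b_B)\chi_B\|_{L^{p(\cdot)}(\omega)}}{\|\chi_B\|_{L^{p(\cdot)}(\omega)}}.$$
Taking the supremum over $B$ yields the desired inequality.

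For the reverse direction, I would appeal to the classical John--Nirenberg distributional estimate
$$|\{y\in B:|b(y)-b_B|>\lambda\}|\le C_1|B|\exp(-C_2\lambda/\|b\|_{BMO}),$$
and upgrade it to the weighted variable-exponent setting. Since $\omega\in A_{p(\cdot)}$ together with $p(\cdot)\in LH\cap\mathcal{P}$ guarantees $A_\infty$-type behavior for $\omega$ and boundedness of the Hardy--Littlewood maximal operator on $L^{p(\cdot)}(\omega)$, one can feed the exponential decay into a modular estimate for $\|\cdot\|_{L^{p(\cdot)}}$ and obtain
$$\|(b-b_B)\chi_B\|_{L^{p(\cdot)}(\omega)} \lesssim \|b\|_{BMO}\,\|\chi_B\|_{L^{p(\cdot)}(\omega)} = \|b\|_{BMO}\,\|\omega\|_{L^{p(\cdot)}(B)}.$$
Dividing by $\|\chi_B\|_{L^{p(\cdot)}(\omega)}$ and taking the supremum over $B$ gives $\|b\|_{BMO_{p(\cdot),\omega}}\lesssim\|b\|_{BMO}$. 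Note that the displayed inequality is precisely the ``moreover'' clause, so no extra work is needed for it.

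The main obstacle is transferring John--Nirenberg to the weighted variable-exponent norm. The delicate point is that the $L^{p(\cdot)}$ modular is not homogeneous, so one cannot simply integrate a monomial against the distributional bound as in the classical $L^p$ setting; instead, a careful modular computation combined with the $A_\infty$ self-improvement of $\omega$ (which requires the $LH$ regularity of $p(\cdot)$ to pass between the variable exponent and integer exponents) is required. Once this weighted John--Nirenberg bound is in hand, the rest of the argument is bookkeeping around the definitions of $L^{p(\cdot)}(\omega)$ and the $A_{p(\cdot)}$ condition.
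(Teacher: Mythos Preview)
The paper does not prove this lemma; it is quoted verbatim from \cite{Gu5} with no argument supplied, so there is no ``paper's own proof'' to compare your proposal against.

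On its own merits: your forward bound $\|b\|_{BMO}\lesssim\|b\|_{BMO_{p(\cdot),\omega}}$ is clean and correct---H\"older in $L^{p(\cdot)}$ followed by the $A_{p(\cdot)}$ condition is exactly the right move. The reverse direction is where you are honest but incomplete. You correctly identify that the crux is a weighted variable-exponent John--Nirenberg inequality, and you flag the inhomogeneity of the modular as the obstruction to the naive argument. However, the sentence ``one can feed the exponential decay into a modular estimate'' is doing all the work and is not yet a proof. The standard route (which is essentially what \cite{Gu5} does) is to use the extrapolation/duality machinery available once $\omega\in A_{p(\cdot)}$ and $p(\cdot)\in LH\cap\mathcal P$: under these hypotheses $\omega^{p(\cdot)}$ (or a suitable power) lies in a classical Muckenhoupt class, and one can either invoke a Rubio de Francia extrapolation argument or directly bound the modular by splitting level sets and using the doubling/reverse-H\"older property of $\omega$ together with the log-H\"older bound to freeze the exponent on each ball. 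Your outline is pointing at the right target, but to make it a proof you need to actually execute one of these two mechanisms rather than assert that it can be done.
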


\begin{lem}[\cite{Gu6}]\label{Gu6}
	Let $p( \cdot ), p_i( \cdot ) \in {\mathcal P}_0, i = 1, \cdots ,m$, and $\frac{1}{{p(\cdot)}} = \sum\limits_{i = 1}^m {\frac{1}{{{p_i}(\cdot)}}}$. For any ${f_i} \in {L^{{p_i}( \cdot )}}$, we have
	\begin{equation*}
	{\left\| {{f_1} \cdots {f_m}} \right\|_{{L^{p( \cdot )}}}} \lesssim \prod\limits_{i = 1}^m {{{\left\| {{f_i}} \right\|}_{{L^{{p_i}( \cdot )}}}}}.
	\end{equation*}
\end{lem}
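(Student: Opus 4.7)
The plan is to establish the inequality first for $m=2$ and then bootstrap by induction on $m$. For the base case, using homogeneity I would normalize $\|f_1\|_{L^{p_1(\cdot)}} = \|f_2\|_{L^{p_2(\cdot)}} = 1$, so that by the definition of the Luxemburg (quasi-)norm the modulars satisfy $\int_{\rn} |f_i(x)|^{p_i(x)}\,dx \leq 1$ for $i=1,2$. It then suffices to show that $\int_{\rn} |f_1(x) f_2(x)|^{p(x)}\,dx$ is bounded by an absolute constant, whence $\|f_1 f_2\|_{L^{p(\cdot)}} \lesssim 1$.

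The key observation is that since $\frac{1}{p(x)} = \frac{1}{p_1(x)} + \frac{1}{p_2(x)}$ with each $p_i(x)$ finite and positive, the exponents $\alpha(x) := p_1(x)/p(x)$ and $\beta(x) := p_2(x)/p(x)$ satisfy $\frac{1}{\alpha(x)} + \frac{1}{\beta(x)} = 1$ and both exceed $1$ pointwise. The numerical Young inequality $ab \leq a^{\alpha(x)}/\alpha(x) + b^{\beta(x)}/\beta(x)$ applied with $a = |f_1(x)|^{p(x)}$, $b = |f_2(x)|^{p(x)}$ and the identity $\alpha(x) p(x) = p_1(x)$, $\beta(x) p(x) = p_2(x)$ yields the pointwise bound
\begin{equation*}
|f_1(x) f_2(x)|^{p(x)} \leq \frac{p(x)}{p_1(x)}|f_1(x)|^{p_1(x)} + \frac{p(x)}{p_2(x)}|f_2(x)|^{p_2(x)}.
\end{equation*}
Since $p(x)/p_i(x) \leq 1$, integrating gives $\int_{\rn} |f_1 f_2|^{p(x)}\,dx \leq 2$, and rescaling by a constant depending only on $p^{-}$ and $p^{+}$ recovers the desired inequality.

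For $m \geq 3$ I would induct on $m$. Set $\frac{1}{q(\cdot)} := \sum_{i=1}^{m-1} \frac{1}{p_i(\cdot)}$, so that $\frac{1}{p(\cdot)} = \frac{1}{q(\cdot)} + \frac{1}{p_m(\cdot)}$. The inductive hypothesis applied to the first $m-1$ factors gives $\|f_1 \cdots f_{m-1}\|_{L^{q(\cdot)}} \lesssim \prod_{i=1}^{m-1} \|f_i\|_{L^{p_i(\cdot)}}$, while the $m=2$ case applied to the pair $(f_1 \cdots f_{m-1},\, f_m)$ with exponents $(q(\cdot), p_m(\cdot))$ closes the induction.

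The main obstacle I anticipate is handling the case where $p(\cdot)$ may take values below $1$ (since $p(\cdot) \in \mathcal{P}_0$), where the Luxemburg functional is only a quasi-norm and the modular lacks the usual subadditivity; this is circumvented by working directly with the modular inequality above and using that $p^{-}(\rn) > 0$ guarantees only a finite loss in the rescaling constant. All other steps are routine consequences of Young's inequality and the modular characterization of the Luxemburg norm.
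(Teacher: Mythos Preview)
Your argument is correct and is the standard route to the generalized H\"older inequality in variable exponent spaces: normalize, apply Young's inequality pointwise with conjugate exponents $p_1(x)/p(x)$ and $p_2(x)/p(x)$, integrate, and then convert the modular bound back to a norm bound using $p^- > 0$; the induction step is routine. The paper does not supply its own proof of this lemma---it is quoted from \cite{Gu6} as a known fact---so there is nothing to compare against beyond noting that your proof is the expected one.
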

Now, we give the following Proposition which is crucial to our proof of Theorem \ref{thm10}.
\begin{lem}\label{cen6}
Let $q( \cdot ) \in LH \cap {\mathcal P}$, $v  \in {A_{p( \cdot )}}$. For any $l \in (1,\infty )$, if $\frac{t}{r} \ge l$, then we have 
\begin{equation*}
{\left\| {b - {b_{B(x_0,r)}}} \right\|_{{L^{q( \cdot )}}(B({x_0},t),vdx)}} \lesssim \left( {1 + \log \frac{t}{r}} \right){\left\| b \right\|_{BMO}}{\left\| v \right\|_{{L^{q( \cdot )}}(B({x_0},t))}}.
\end{equation*}
\begin{proof}[Proof:]
Combining Lemma \ref{Gu5} with Lemma \ref{Gu4}, we have
\begin{align*}
&{\left\| {b - {b_B}} \right\|_{{L^{q( \cdot )}}(B({x_0},t),vdx)}}\\
\le& {\left\| {b - {b_{B({x_0},t)}}} \right\|_{{L^{q( \cdot )}}(B({x_0},t),vdx)}} + {\left\| {{b_B} - {b_{B({x_0},t)}}} \right\|_{{L^{q( \cdot )}}(B({x_0},t),vdx)}}\\
\lesssim& {\left\| b \right\|_{BMO}}{\left\| v \right\|_{{L^{q( \cdot )}}(B({x_0},t))}} + {\left\| b \right\|_{BMO}}{\left\| v \right\|_{{L^{q( \cdot )}}(B({x_0},t))}}\log \frac{t}{r}\\
=& \left( {1 + \log \frac{t}{r}} \right){\left\| b \right\|_{BMO}}{\left\| v \right\|_{{L^{q( \cdot )}}(B({x_0},t))}}
\end{align*}
\end{proof}
\end{lem}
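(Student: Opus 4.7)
The plan is to apply the triangle inequality to reduce the estimate to two pieces that each match one of the available lemmas. Concretely, I would write
\begin{equation*}
b(x) - b_{B(x_0,r)} = \bigl(b(x) - b_{B(x_0,t)}\bigr) + \bigl(b_{B(x_0,t)} - b_{B(x_0,r)}\bigr),
\end{equation*}
and then apply the triangle inequality of the weighted variable exponent Lebesgue norm ${\left\| \cdot \right\|_{L^{q(\cdot)}(B(x_0,t), v\,dx)}}$ to get two terms $I+II$.

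For the term $I={\left\| b(\cdot) - b_{B(x_0,t)} \right\|_{L^{q(\cdot)}(B(x_0,t), v\,dx)}}$, the center and the averaging ball coincide, so Lemma \ref{Gu4} applies directly and gives $I \lesssim {\left\| b \right\|_{BMO}} {\left\| v \right\|_{L^{q(\cdot)}(B(x_0,t))}}$. For the term $II$, the quantity $b_{B(x_0,t)} - b_{B(x_0,r)}$ is a real constant independent of $x$, so it factors out of the Luxemburg norm, leaving $II = \bigl|b_{B(x_0,t)} - b_{B(x_0,r)}\bigr| \cdot {\left\| v \right\|_{L^{q(\cdot)}(B(x_0,t))}}$. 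The scalar factor is then controlled by Lemma \ref{Gu5}, whose hypothesis $t/r \ge l > 1$ is exactly what we have assumed, producing $\bigl|b_{B(x_0,t)} - b_{B(x_0,r)}\bigr| \lesssim {\left\| b \right\|_{BMO}} \log(t/r)$.

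Summing $I$ and $II$ and pulling out ${\left\| b \right\|_{BMO}} {\left\| v \right\|_{L^{q(\cdot)}(B(x_0,t))}}$ yields the claimed prefactor $1 + \log(t/r)$. The only slightly delicate step is the second one: I want to be sure that constants truly pull out of the weighted variable exponent Lebesgue norm in the form $\|c \cdot v \chi_{B(x_0,t)}\|_{L^{q(\cdot)}} = |c| \cdot \|v\|_{L^{q(\cdot)}(B(x_0,t))}$, which follows immediately from the definition of the Luxemburg norm together with the convention ${\left\| f \right\|_{L^{q(\cdot)}(E, v\,dx)}} = {\left\| vf \right\|_{L^{q(\cdot)}(E)}}$ recorded in the paper. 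I do not foresee any serious obstacle; the argument is essentially a short triangle-inequality splitting, with Lemmas \ref{Gu4} and \ref{Gu5} doing all of the substantive work.
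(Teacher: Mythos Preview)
Your proposal is correct and follows essentially the same approach as the paper: split via the triangle inequality into $\|b - b_{B(x_0,t)}\|$ and the constant term $|b_{B(x_0,t)} - b_{B(x_0,r)}|\cdot\|v\|$, then invoke Lemma~\ref{Gu4} and Lemma~\ref{Gu5} respectively. Your explicit remark about constants factoring out of the Luxemburg norm is a helpful clarification of a step the paper leaves implicit.
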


We will use the following statement on the boundedness of the weighted Hardy operators
\begin{equation*}
H_w^{}g(r): = \int_r^\infty  {g(t)w(t)\frac{{dt}}{t}}, H_w^*g(r): = \int_r^\infty  {{{\left( {1 + \log \frac{t}{r}} \right)}^m}g(t)w(t)\frac{{dt}}{t}}, t>0,
\end{equation*}
where $w$ is a weight. The following Lemmas are also important to prove the main results.

\begin{lem}[\cite{Gu1}]\label{Gu2}
Let $v_1, v_2$ and $w$ be weights on $\left( {0,\infty } \right)$ and $v_1$ be bounded outside a neighborhood of the origin. The inequality
\begin{equation}\label{Gu21}
\mathop {\sup }\limits_{r > 0} {v_2}(r){H_w}g(r) \lesssim \mathop {\sup }\limits_{r > 0} {v_1}(r)g(r)
\end{equation}
holds for all non-negative and non-decreasing $g$ on $\left( {0,\infty } \right)$ if and only if
\begin{equation*}
B: = \mathop {\sup }\limits_{r > 0} {v_2}(r)\int_r^\infty  {\mathop {{\rm{inf}}}\limits_{t < \eta  < \infty } \left( {{v_1}{{(\eta )}^{ - 1}}} \right)w\left( t \right)dt}  < \infty.
\end{equation*}
\end{lem}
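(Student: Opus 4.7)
The plan is to establish the equivalence as a standard Hardy-type characterization, handling the two directions separately by exploiting (for sufficiency) the monotonicity of $g$ and (for necessity) testing against an extremal choice of $g$.

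For sufficiency, assuming $B < \infty$, I would set $M := \sup_{r > 0} v_1(r) g(r)$. Since $g$ is non-decreasing, for each $t > 0$ and a.e.\ $\eta \in (t, \infty)$ one has $g(t) \le g(\eta) \le M \, v_1(\eta)^{-1}$; taking the essential infimum over $\eta$ yields the key pointwise bound
\begin{equation*}
g(t) \le M \cdot \mathop{{\rm essinf}}\limits_{t < \eta < \infty} v_1(\eta)^{-1}.
\end{equation*}
Substituting this into $H_w g(r)$, multiplying by $v_2(r)$, and taking $\sup_{r > 0}$ then immediately delivers the Hardy inequality (\ref{Gu21}) with constant comparable to $B \cdot M$.

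For necessity, I would test the assumed Hardy inequality against the extremal non-decreasing function $g_*(t) := \mathop{{\rm essinf}}\limits_{t < \eta < \infty} v_1(\eta)^{-1}$, which is monotone because the essential infimum runs over a shrinking set as $t$ grows. The crucial step is to verify $\sup_{r > 0} v_1(r) g_*(r) \lesssim 1$; I would obtain this for a.e.\ $r$ via the one-sided Lebesgue differentiation estimate
\begin{equation*}
\mathop{{\rm essinf}}\limits_{r < \eta < \infty} v_1(\eta)^{-1} \le \frac{1}{\epsilon} \int_r^{r+\epsilon} v_1(\eta)^{-1}\, d\eta \longrightarrow v_1(r)^{-1} \quad (\epsilon \to 0^+),
\end{equation*}
whose validity is ensured by the hypothesis that $v_1$ is bounded outside a neighborhood of the origin (so $v_1^{-1}$ is locally integrable away from $0$). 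Feeding $g_*$ into the Hardy inequality then forces $B \lesssim 1$.

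The main technical obstacle is this a.e.\ pointwise bound $v_1(r) \cdot \mathop{{\rm essinf}}\limits_{\eta > r} v_1(\eta)^{-1} \lesssim 1$ in the necessity part: since weights are defined only modulo null sets, pointwise pathologies of $v_1$ must be bypassed through a right-sided Lebesgue differentiation argument, and the hypothesis on $v_1$ near the origin is precisely what makes this step go through cleanly.
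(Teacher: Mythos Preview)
The paper does not prove this lemma at all; it is quoted verbatim from \cite{Gu1} and used as a black box. So there is no ``paper's own proof'' to compare against, and your sketch is in fact the standard argument for this Hardy-type characterization.

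Your sufficiency direction is correct as written. In the necessity direction your test function $g_*$ and the overall strategy are right, but the justification of $v_1(r)\,g_*(r)\lesssim 1$ contains a small slip: boundedness of $v_1$ from above gives $v_1^{-1}$ bounded \emph{below}, which does not by itself yield $v_1^{-1}\in L^1_{\mathrm{loc}}$, so your one-sided Lebesgue differentiation of $v_1^{-1}$ is not justified by that hypothesis. The clean fix is to run the same idea on $v_1$ rather than $v_1^{-1}$: since
\[
\mathop{\mathrm{essinf}}_{\eta>r} v_1(\eta)^{-1}=\Bigl(\mathop{\mathrm{esssup}}_{\eta>r} v_1(\eta)\Bigr)^{-1}
\quad\text{and}\quad
\mathop{\mathrm{esssup}}_{\eta>r} v_1(\eta)\ge \frac{1}{\epsilon}\int_r^{r+\epsilon} v_1(\eta)\,d\eta\ \longrightarrow\ v_1(r)\ \text{a.e.},
\]
you get $g_*(r)\le v_1(r)^{-1}$ a.e.\ using only $v_1\in L^1_{\mathrm{loc}}$, which holds for any weight. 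The hypothesis that $v_1$ is bounded outside a neighborhood of the origin is really there to guarantee that $g_*$ is not identically zero (equivalently, that $\mathop{\mathrm{esssup}}_{\eta>t} v_1(\eta)<\infty$ for large $t$), so that the test is nonvacuous---not for the differentiation step you invoked.
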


\begin{lem}[\cite{Gu1}]\label{Gu3}
	Let $v_1, v_2$ and $w$ be weights on $\left( {0,\infty } \right)$ and $v_1$ be bounded outside a neighborhood of the origin. The inequality
	\begin{equation}\label{Gu31}
\mathop {\sup }\limits_{r > 0} {v_2}(r)H_w^*g(r) \lesssim \mathop {\sup }\limits_{r > 0} {v_1}(r)g(r)
	\end{equation}
	holds for all non-negative and non-decreasing $g$ on $\left( {0,\infty } \right)$ if and only if
	\begin{equation*}
B: = \mathop {\sup }\limits_{r > 0} {v_2}(r)\int_r^\infty  {{{\left( {1 + \log \frac{t}{r}} \right)}^m}\mathop {{\rm{inf}}}\limits_{t < \eta  < \infty } \left( {{v_1}{{(\eta )}^{ - 1}}} \right)w\left( t \right)dt}  < \infty.
	\end{equation*}
\end{lem}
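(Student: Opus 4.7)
The statement is a weighted Hardy-type supremum inequality differing from Lemma \ref{Gu2} only by the insertion of the logarithmic factor $(1+\log(t/r))^m$. Since this factor depends on $t,r$ alone and not on $g$, the whole argument parallels the proof of Lemma \ref{Gu2}; the two directions (sufficiency and necessity) are established independently, with the non-decreasing character of $g$ driving the sufficiency and an explicit extremal $g$ driving the necessity.

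For the \emph{sufficiency} direction, assume $B<\infty$ and set $A:=\sup_{s>0} v_1(s)g(s)$, which we may assume finite. Because $g$ is non-decreasing, for every $t>0$ and every $\eta>t$ one has $g(t)\le g(\eta)\le v_1(\eta)^{-1} A$, and taking the infimum in $\eta$ gives the pointwise majorization
\begin{equation*}
g(t)\le A\cdot\inf_{t<\eta<\infty} v_1(\eta)^{-1}.
\end{equation*}
Plugging this into $H_w^{\ast}g(r)$, multiplying by $v_2(r)$, and taking the supremum in $r$ yields
\begin{equation*}
\sup_{r>0} v_2(r) H_w^{\ast} g(r)
\le A\cdot\sup_{r>0} v_2(r)\int_r^\infty\!\!\Big(1+\log\tfrac{t}{r}\Big)^{\!m}\inf_{t<\eta<\infty}\!v_1(\eta)^{-1}\,w(t)\,dt
= A\cdot B,
\end{equation*}
which is exactly \eqref{Gu31}.

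For the \emph{necessity} direction, assume \eqref{Gu31} holds with some constant $C$. The plan is to feed the optimal non-decreasing test function into the inequality. Define
\begin{equation*}
g_0(t):=\inf_{t<\eta<\infty} v_1(\eta)^{-1}, \qquad t>0.
\end{equation*}
As $t$ increases the set $\{\eta:\eta>t\}$ shrinks, hence $g_0$ is non-decreasing; and since $v_1$ is bounded outside a neighborhood of the origin, $v_1(\eta)^{-1}$ is bounded below by a positive constant for large $\eta$, so $g_0$ is finite and non-negative. Moreover, for every $r>0$, $v_1(r) g_0(r)=v_1(r)\inf_{\eta>r} v_1(\eta)^{-1}\le 1$ (this is where a mild regularity argument on $v_1$, or a standard monotone approximation $g_N:=\min(g_0,N)$ followed by $N\to\infty$, is used to handle possible discontinuities). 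Substituting $g_0$ into \eqref{Gu31} with this bound on the right-hand side gives exactly
\begin{equation*}
\sup_{r>0} v_2(r)\int_r^\infty\!\!\Big(1+\log\tfrac{t}{r}\Big)^{\!m}\inf_{t<\eta<\infty}\!v_1(\eta)^{-1}\, w(t)\,dt\le C,
\end{equation*}
i.e.\ $B<\infty$.

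The routine parts are the pointwise monotone majorization and the verification that $g_0$ saturates the constraint $v_1 g\le 1$. The only delicate point—and the main obstacle—is the measurability/finiteness of $g_0$ when $v_1$ is merely a weight with possibly bad behavior near $0$ or $\infty$; the hypothesis that $v_1$ is bounded outside a neighborhood of the origin is exactly what lets one justify taking the extremal $g_0$ (or its truncations $g_N$) in the supremum inequality, and it is the same hypothesis that makes the analogous Lemma \ref{Gu2} work. The logarithmic kernel $(1+\log(t/r))^m$ plays no role beyond being carried along in both directions.
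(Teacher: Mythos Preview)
The paper does not supply its own proof of this lemma; it is quoted verbatim from \cite{Gu1} as a known result, so there is nothing in the present paper to compare your argument against.

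Your proof is the standard one and is essentially correct. The sufficiency direction is clean: the monotonicity of $g$ gives $g(t)\le g(\eta)\le A\,v_1(\eta)^{-1}$ for every $\eta>t$, and taking the infimum yields exactly the integrand of $B$. For the necessity direction your choice of extremal $g_0(t)=\inf_{\eta>t}v_1(\eta)^{-1}$ is the right one, and you correctly flag the only delicate point, namely that $v_1(r)g_0(r)\le 1$ is not literally automatic because the infimum is over $\eta>r$ rather than $\eta\ge r$. The cleanest fix is to test instead with $\tilde g_0(t)=\inf_{\eta\ge t}v_1(\eta)^{-1}$, which is still non-decreasing, satisfies $v_1(r)\tilde g_0(r)\le 1$ trivially, and differs from $g_0$ only at jump points of $v_1$; since $B$ involves an integral in $t$, the two test functions give the same value of $H_w^*$. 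Your truncation $g_N=\min(g_0,N)$ alone does not resolve this particular issue (it controls the size of $g_0$, not the product $v_1 g_0$), so the $\tilde g_0$ variant is the more direct route.

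One incidental remark: the paper's statement has a typographical mismatch---$H_w^*$ is defined with measure $dt/t$ while $B$ is written with $dt$. Your argument tacitly assumes these agree, which is the intended reading (the applications in conditions \eqref{con4}, \eqref{con6} all carry $dt/t$).
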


\vspace{0.1cm}

\vspace{0.1cm}

\section{Proofs of Main Results}\label{sec3}

\subsection{Proofs of Theorem \ref{thm1}}
The proof of (ii) is similar to the proof of (i), so we merely conside the proof of (i).
\begin{proof}[Proof:]
Firstly, we use the piecewise integration technique to perform the following estimates. For any ${f_i} \in {M^{{p_i},{\varphi _{1i}}}}\left( {{\omega _i}} \right)$, $B=B(x_0,r)$, we have
\begin{align}\label{ieq3.1}
&\sum\limits_{j = 1}^\infty  {\prod\limits_{i = 1}^m {\frac{1}{{\left| {{2^{j + 1}}B} \right|}}} \int_{{2^{j + 1}}B} {\left| {{f_i}({y_i})} \right|d{y_i}} }\notag\\
\lesssim&\sum\limits_{j = 1}^\infty  {{{\left( {{2^{j + 1}}r} \right)}^{ - mn}}\prod\limits_{i = 1}^m {{{\left\| {{f_i}} \right\|}_{{L^{{p_i}}}({2^{j + 1}}B,{\omega _i}dx)}}{{\left\| {{\omega _i}^{ - \frac{1}{{{p_i}}}}} \right\|}_{{L^{{p_i}'}}({2^{j + 1}}B)}}} }\notag\\
\le& \sum\limits_{j = 1}^\infty  {{{\left( {{2^{j + 1}}r} \right)}^{ - mn - 1}}\int_{{2^{j + 1}}r}^{{2^{j + 2}}r} {\prod\limits_{i = 1}^m {{{\left\| {{f_i}} \right\|}_{{L^{{p_i}}}(B({x_0},t),{\omega _i}dx)}}{{\left\| {{\omega _i}^{ - \frac{1}{{{p_i}}}}} \right\|}_{{L^{{p_i}'}}(B({x_0},t))}}} dt} }\notag\\
\lesssim&\sum\limits_{j = 1}^\infty  {\int_{{2^{j + 1}}r}^{{2^{j + 2}}r} {\prod\limits_{i = 1}^m {{{\left\| {{f_i}} \right\|}_{{L^{{p_i}}}(B({x_0},t),{\omega _i}dx)}}{{\left\| {{\omega _i}^{ - \frac{1}{{{p_i}}}}} \right\|}_{{L^{{p_i}'}}(B({x_0},t))}}} \frac{{dt}}{{{t^{mn + 1}}}}} }\notag\\
\le&\int_{2r}^\infty  {\prod\limits_{i = 1}^m {{{\left\| {{f_i}} \right\|}_{{L^{{p_i}}}(B({x_0},t),{\omega _i}dx)}}{{\left\| {{\omega _i}^{ - \frac{1}{{{p_i}}}}} \right\|}_{{L^{{p_i}'}}(B({x_0},t))}}} \frac{{dt}}{{{t^{mn + 1}}}}}\notag\\
\lesssim&\int_{2r}^\infty  {\prod\limits_{i = 1}^m {{{\left\| {{f_i}} \right\|}_{{L^{{p_i}}}(B({x_0},t),{\omega _i}dx)}}{\omega _i}{{(B({x_0},t))}^{ - \frac{1}{{{p_i}}}}}} \frac{{dt}}{t}},
\end{align}
where the last step holds because of the definition of $\vec{\omega} \in {A_{\vec P}}$ and Lemma \ref{cen4}. Thus we obtain 
\begin{align}\label{ieq3.2}
&\mathop {\sup }\limits_{{x_0} \in \rn,r > 0} {\varphi _2}{({x_0},r)^{ - 1}}v{\left( B \right)^{ - \frac{1}{p}}}{\left\| {\sum\limits_{j = 1}^\infty  {\prod\limits_{i = 1}^m {\frac{1}{{\left| {{2^{j + 1}}B} \right|}}} \int_{{2^{j + 1}}B} {\left| {{f_i}({y_i})} \right|d{y_i}} } } \right\|_{{L^p}(B,vdx)}}\notag\\
\lesssim& \mathop {\sup }\limits_{{x_0} \in \rn,r > 0} {\varphi _2}{\left( {{x_0},r} \right)^{ - 1}}\int_r^\infty  {\prod\limits_{i = 1}^m {\left( {{{\left\| {{f_i}} \right\|}_{{L^{{p_i}}}(B(x_0,t),{\omega _i}dx)}}{\omega _i}{{\left( {B({x_0},t)} \right)}^{ - \frac{1}{{{p_i}}}}}} \right)\frac{{dt}}{t}}}\notag\\
\lesssim& \mathop {\sup }\limits_{{x_0} \in \rn,r > 0} \prod\limits_{i = 1}^m {\left( {{\varphi _{1i}}{{\left( {{x_0},r} \right)}^{ - 1}}{{\left\| {{f_i}} \right\|}_{{L^{{p_i}}}(B({x_0},r),{\omega _i}dx)}}{\omega _i}{{\left( {B({x_0},r)} \right)}^{ - \frac{1}{{{p_i}}}}}} \right)}\notag\\
\le& \prod\limits_{i = 1}^m {{{\left\| {{f_i}} \right\|}_{{M^{{p_i},{\varphi _{1i}}}}({\omega _i})}}},
\end{align}
where the second inequality holds since we used (\ref{con3}) and (\ref{Gu21}) in Lemma \ref{Gu2}.	

For any ${f_i} \in {M^{{p_i},{\varphi _{1i}}}}\left( {{\omega _i}} \right)$, let $f_i=f^0_i+f^{\infty}_i$, where $f^0_i=f_i\chi_{2B}$, $i=1,\ldots,m$ and $\chi_{2B}$ denotes the characteristic function of $2B$. For any $z \in \rn$, we have
\begin{equation*}
\left| {T({\vec f})(z)} \right| \le |T(f_1^0, \ldots ,f_m^0)(z)| + \sum\limits_{({\alpha _1}, \ldots ,{\alpha _m}) \ne 0} {\left| {T(f_1^{{\alpha _1}}, \cdots ,f_m^{{\alpha _m}})(z)} \right|}.
\end{equation*}

Combining $(\ref{cen2})$ with $(\ref{ieq3.2})$, we can easy to get the boundedness of $T$ as follows
\begin{align*}
&{\left\| {T(\vec f)} \right\|_{{M^{p,{\varphi _2}}}\left( {{v}} \right)}}\\
\le& {\left\| {T(f_1^0, \ldots ,f_m^0)} \right\|_{{M^{p,{\varphi _2}}}\left( {{v}} \right)}} + \sum\limits_{({\alpha _1}, \ldots ,{\alpha _m}) \ne 0} {{{\left\| {T(f_1^{{\alpha _1}}, \cdots ,f_m^{{\alpha _m}})} \right\|}_{{M^{p,{\varphi _2}}}\left( {{v}} \right)}}}\\
\le& {\left\| {T(f_1^0, \ldots ,f_m^0)} \right\|_{{M^{p,{\varphi _2}}}\left( {{v}} \right)}}\\
+& \sum\limits_{({\alpha _1}, \ldots ,{\alpha _m}) \ne 0} \mathop {\sup }\limits_{{x_0} \in \rn,r > 0}{\varphi _2}{(x_0,r)^{ - 1}}{v}{\left( B \right)^{ - \frac{1}{p}}}{\left\| {\sum\limits_{j = 1}^\infty  {\prod\limits_{i = 1}^m {\frac{1}{{\left| {{2^{j + 1}}B} \right|}}} \int_{{2^{j + 1}}B} {\left| {{f_i}({y_i})} \right|d{y_i}} } } \right\|_{{L^p}(B,vdx)}}\\
\lesssim& \prod\limits_{i = 1}^m {{{\left\| {{f_i}} \right\|}_{{M^{{p_i},{\varphi _{1i}}}}\left( {{\omega _i}} \right)}}}.
\end{align*}
\end{proof}
\subsection{Proof of Theorem \ref{thm2}}
Without loss of generality, for the sake of simplicity, we only consider the case when $m=2$.
\begin{proof}[Proof:]
For any ball $B=B(x_0,r)$, let $f_i=f^0_i+f^{\infty}_i$, where $f^0_i=f_i\chi_{2B}$, $i=1,\ldots,m$ and $\chi_{2B}$ denotes the characteristic function of $2B$. Then, we have
\begin{equation*}
\begin{split}
&{v}{\left( B \right)^{-{\frac{1}{p}}}}{\left\| {{T_{\prod {\vec b} }}({f_1},{f_2})} \right\|_{{L^p}(B,{v}dx)}}\\
\le& {v}{\left( B \right)^{ - \frac{1}{p}}}{\left\| {{T_{\prod {\vec b} }}(f_1^0,f_2^0)} \right\|_{{L^p}\left( {B,{v}dx} \right)}} + {v}{\left( B \right)^{ - \frac{1}{p}}}{\left\| {{T_{\prod {\vec b} }}(f_1^0,f_2^\infty )} \right\|_{{L^p}\left( {B,{v}dx} \right)}}\\
+&{v}{\left( B \right)^{ - \frac{1}{p}}}{\left\| {{T_{\prod {\vec b} }}(f_1^\infty ,f_2^0)} \right\|_{{L^p}\left( {B,{v}dx} \right)}} + {v}{\left( B \right)^{ - \frac{1}{p}}}{\left\| {{T_{\prod {\vec b} }}(f_1^\infty ,f_2^\infty )} \right\|_{{L^p}\left( {B,{v}dx} \right)}}\\
:=&{J_1}({x_0},r) + {J_2}({x_0},r) + {J_3}({x_0},r) + {J_4}({x_0},r).
\end{split}
\end{equation*}
We first claim that
\begin{equation}\label{eq3.5}
{J_i}({x_0},r) \le C\int_{2r}^\infty  {{{\left( {1 + \log \frac{t}{r}} \right)}^2}\prod\limits_{i = 1}^2 {\left( {{{\left\| {{f_i}} \right\|}_{{L^{{p_i}}}(B({x_0},t),{\omega _i}dx)}}{\omega _i}{{\left( {B({x_0},t)} \right)}^{ - \frac{1}{{{p_i}}}}}} \right)\frac{{dt}}{t}} }, i=1, 2, 3, 4.,
\end{equation}
where $C$ is independent of $r$, $x_0$ and $\vec f$. 

When $(\ref{eq3.5})$ are valid, the proofs of boundedness are similar to the proof ideas in Theorem \ref{thm1}, which are given as follows
\begin{align*}
&{\left\| {{{T_{\prod {\vec b} }}}(\vec f)} \right\|_{{M^{p,{\varphi _2}}}\left( {{v}} \right)}}\\
\le& \mathop {\sup }\limits_{{x_0} \in \rn,r > 0} {\varphi _2}{\left( {{x_0},r} \right)^{ - 1}}\sum {J_i\left( {x_0},r \right)}\\
\lesssim& \mathop {\sup }\limits_{{x_0} \in {\rm{\rn}},r > 0} {\varphi _2}{\left( {{x_0},r} \right)^{ - 1}}\int_r^\infty  {{{\left( {1 + {\log \frac{t}{r}}} \right)}^2}\prod\limits_{i = 1}^2 {\left( {{{\left\| {{f_i}} \right\|}_{{L^{{p_i}}}(B({x_0},t),{\omega _i}dx)}}{\omega _i}{{\left( {B({x_0},t)} \right)}^{ - \frac{1}{{{p_i}}}}}} \right)\frac{{dt}}{t}} }\\
\lesssim& \mathop {\sup }\limits_{{x_0} \in \rn,r > 0} \prod\limits_{i = 1}^2 {\left( {{\varphi _{1i}}{{\left( {{x_0},r} \right)}^{ - 1}}{{\left\| {{f_i}} \right\|}_{{L^{{p_i}}}(B({x_0},r),{\omega _i}dx)}}{\omega _i}{{\left( {B({x_0},r)} \right)}^{ - \frac{1}{{{p_i}}}}}} \right)}\\
\le& \prod\limits_{i = 1}^2 {{{\left\| {{f_i}} \right\|}_{{M^{{p_i},{\varphi _{1i}}}}({\omega _i})}}},
\end{align*}
where the third inequality holds since we used Lemma \ref{Gu3} and $(\ref{con4})$ to make (\ref{Gu31}) holds.

From the above proofs, we only need to verify the correctness of (\ref{eq3.5}).

Due to ${T_{\prod \vec b }} \in LB\left( {\prod\limits_{i = 1}^m {{M^{{p_i},{\varphi _{1i}}}}\left( {{\omega _i}} \right)}  \to {M^{p,{\varphi _2}}}\left( {{v}} \right)} \right)$, we do not need to estimate $J_1$ anymore. Note that ${J_{3}}$ is similar to ${J_{2}}$, so we merely consider to estimate ${J_{2}}$ and ${J_{4}}$.
\begin{align}\label{eq3.7}
& \left| {{T_{\prod {\vec b} }}(f_1^0,f_2^\infty )\left( z \right)} \right|\notag\\
\le& \left| {\left( {{b_1}\left( z \right) - {\mu _1}} \right)\left( {{b_2}\left( z \right) - {\mu _2}} \right)T(f_1^0,f_2^\infty )\left( z \right)} \right| + \left| {\left( {{b_1}\left( z \right) - {\mu _1}} \right)T(f_1^0,\left( {{b_2} - {\mu _2}} \right)f_2^\infty )\left( z \right)} \right|\notag\\
+& \left| {\left( {{b_2}\left( z \right) - {\mu _2}} \right)T(\left( {{b_1} - {\mu _1}} \right)f_1^0,f_2^\infty )\left( z \right)} \right| + \left| {T(\left( {{b_1} - {\mu _1}} \right)f_1^0,\left( {{b_2} - {\mu _2}} \right)f_2^\infty )\left( z \right)} \right|\notag\\ 
:=& {J_{21}}\left( z \right) + {J_{22}}\left( z \right) + {J_{23}}\left( z \right) + {J_{24}}\left( z \right),
\end{align}
where ${\mu _j} = {\left( {{b_j}} \right)_B}$.

Using H\"older's inequality and Lemma \ref{Gu1}, we have
\begin{align}\label{eq3.8}
&{\left\| {{J_{21}}} \right\|_{{L^p}\left( {B,{v}dx} \right)}}\notag\\
\le& {\left\| {\left( {{b_1} - {\mu _1}} \right)\left( {{b_2} - {\mu _2}} \right)} \right\|_{{L^p}\left( {B,{v}dx} \right)}}{\left\| {T(f_1^0,f_2^\infty )} \right\|_{{L^\infty }\left( B \right)}}\notag\\
\lesssim& \prod\limits_{i = 1}^2 {\left( {{{\left\| {{b_i} - {\mu _i}} \right\|}_{{L^{2p}}(B,{{{v}}}dx)}}} \right)} \int_{2r}^\infty  {\left( {\prod\limits_{i = 1}^m {\left( {{{\left\| {{f_i}} \right\|}_{{L^{{p_i}}}(B\left( {{x_0},t} \right),{\omega _i}dx)}}{\omega _i}{{\left( {B\left( {{x_0},t} \right)} \right)}^{ - \frac{1}{{{p_i}}}}}} \right)} } \right)\frac{{dt}}{t}}\notag\\ 
\lesssim&\left( {\prod\limits_{i = 1}^2 {{{\left\| {{b_i}} \right\|}_{BMO}}} } \right){v}{\left( B \right)^{\frac{1}{p}}}\int_{2r}^\infty  {\left( {\prod\limits_{i = 1}^m {\left( {{{\left\| {{f_i}} \right\|}_{{L^{{p_i}}}(B\left( {{x_0},t} \right),{\omega _i}dx)}}{\omega _i}{{\left( {B\left( {{x_0},t} \right)} \right)}^{ - \frac{1}{{{p_i}}}}}} \right)} } \right)\frac{{dt}}{t}}.
\end{align}
For estimating ${J_{22}}$, note that $T \in LS\left( {\prod\limits_{i = 1}^m {{M^{{p_i},{\varphi _{1i}}}}\left( {{\omega _i}} \right)} } \right)$ and we use the piecewise integration technique again to get
\begin{align}\label{eq3.9}
&\left| {T(f_1^0,\left( {{b_2} - {\mu _2}} \right)f_2^\infty )\left( z \right)} \right|\notag\\
\lesssim&\sum\limits_{j = 1}^\infty  {{{\left( {{2^{j + 1}}r} \right)}^{ - 2n}}\int_{{2^{j + 1}}B} {\int_{{{2^{j + 1}}B}} {\left| {{f_1}\left( {{y_1}} \right)\left( {{b_2}\left( {{y_2}} \right) - {\mu _2}} \right){f_2}\left( {{y_2}} \right)} \right|d{y_1}} d{y_2}} }\notag\\ 
\le&\sum\limits_{j = 1}^\infty  {{{\left( {{2^{j + 1}}r} \right)}^{ - 2n}}{{\left\| {{f_1}} \right\|}_{{L^{{p_1}}}({{2^{j + 1}}B},{\omega _1}dx)}}{{\left\| {{\omega _1}^{ - \frac{1}{{{p_1}}}}} \right\|}_{{L^{{p_1}'}}({{2^{j + 1}}B})}}{{\left\| {{f_2}} \right\|}_{{L^{{p_2}}}({2^{j + 1}}B,{\omega _2}dx)}}{{\left\| {{b_2} - {\mu _2}} \right\|}_{{L^{{p_2}'}}({2^{j + 1}}B,{\omega _2}^{1 - {p_2}'}dx)}}}\notag\\
\le& \sum\limits_{j = 1}^\infty  {{{\left( {{2^{j + 1}}r} \right)}^{ - 2n - 1}}\int_{{2^{j + 1}}r}^{{2^{j + 2}}r} {{{\left\| {{\omega _1}^{ - \frac{1}{{{p_1}}}}} \right\|}_{{L^{{p_1}'}}(B\left( {{x_0},t} \right))}}{{\left\| {{b_2} - {\mu _2}} \right\|}_{{L^{{p_2}'}}(B\left( {{x_0},t} \right),{\omega _2}^{1 - {p_2}'}dx)}}\prod\limits_{i = 1}^2 {{{\left\| {{f_i}} \right\|}_{{L^{{p_i}}}(B\left( {{x_0},t} \right),{\omega _i}dx)}}} dt} }\notag\\
\lesssim&\sum\limits_{j = 1}^\infty  {\int_{{2^{j + 1}}r}^{{2^{j + 2}}r} {{{\left\| {{\omega _1}^{ - \frac{1}{{{p_1}}}}} \right\|}_{{L^{{p_1}'}}(B\left( {{x_0},t} \right))}}{{\left\| {{b_2} - {\mu _2}} \right\|}_{{L^{{p_2}'}}(B\left( {{x_0},t} \right),{\omega _2}^{1 - {p_2}'}dx)}}\prod\limits_{i = 1}^2 {{{\left\| {{f_i}} \right\|}_{{L^{{p_i}}}(B\left( {{x_0},t} \right),{\omega _i}dx)}}} \frac{{dt}}{{{t^{2n + 1}}}}} }\notag\\
\le& \int_{2r}^\infty  {{{\left\| {{\omega _1}^{ - \frac{1}{{{p_1}}}}} \right\|}_{{L^{{p_1}'}}(B\left( {{x_0},t} \right))}}{{\left\| {{b_2} - {\mu _2}} \right\|}_{{L^{{p_2}'}}(B\left( {{x_0},t} \right),{\omega _2}^{1 - {p_2}'}dx)}}\prod\limits_{i = 1}^2 {{{\left\| {{f_i}} \right\|}_{{L^{{p_i}}}(B\left( {{x_0},t} \right),{\omega _i}dx)}}} \frac{{dt}}{{{t^{2n + 1}}}}}\notag\\
\lesssim& \left( {{{{\left\| {{b_2}} \right\|}_{BMO}}} } \right)\int_{2r}^\infty  {\left( {1 + {\log \frac{t}{r}}} \right)\left( {\prod\limits_{i = 1}^2 {\left( {{{\left\| {{f_i}} \right\|}_{{L^{{p_i}}}(B\left( {{x_0},t} \right),{\omega _i}dx)}}{\omega _i}{{\left( {B\left( {{x_0},t} \right)} \right)}^{ - \frac{1}{{{p_i}}}}}} \right)} } \right)\frac{{dt}}{t}}.
\end{align}
where the last step holds because of the definition of $\vec{\omega} \in {A_{\vec P}}$ and Lemma \ref{cen4}.
Combining with Lemma \ref{Gu1}, we can easily see that
\begin{align}\label{eq3.10}
&{\left\| {{J_{22}}} \right\|_{{L^p}\left( {B,{v}dx} \right)}}\notag\\
\le& {\left\| {\left( {{b_1} - {\mu _1}} \right)} \right\|_{{L^p}\left( {B,{v}dx} \right)}}{\left\| {T(f_1^0,\left( {{b_2} - {\mu _2}} \right)f_2^\infty )} \right\|_{{L^\infty }\left( B \right)}}\notag\\
\lesssim& {v}{\left( B \right)^{\frac{1}{p}}}\left( {\prod\limits_{i = 1}^2 {{{\left\| {{b_i}} \right\|}_{BMO}}} } \right)\int_{2r}^\infty  {\left( {1 + {\log \frac{t}{r}}} \right)\left( {\prod\limits_{i = 1}^m {\left( {{{\left\| {{f_i}} \right\|}_{{L^{{p_i}}}(B\left( {{x_0},t} \right),{\omega _i}dx)}}{\omega _i}{{\left( {B\left( {{x_0},t} \right)} \right)}^{ - \frac{1}{{{p_i}}}}}} \right)} } \right)\frac{{dt}}{t}}.
\end{align}
Similarly, we also have
\begin{align}\label{eq3.11}
&{\left\| {{J_{23}}} \right\|_{{L^p}\left( {B,{v}dx} \right)}}\notag\\
\lesssim& {v}{\left( B \right)^{\frac{1}{p}}}\left( {\prod\limits_{i = 1}^2 {{{\left\| {{b_i}} \right\|}_{BMO}}} } \right)\int_{2r}^\infty  {\left( {1 +{\log \frac{t}{r}}} \right)\left( {\prod\limits_{i = 1}^m {\left( {{{\left\| {{f_i}} \right\|}_{{L^{{p_i}}}(B\left( {{x_0},t} \right),{\omega _i}dx)}}{\omega _i}{{\left( {B\left( {{x_0},t} \right)} \right)}^{ - \frac{1}{{{p_i}}}}}} \right)} } \right)\frac{{dt}}{t}}.
\end{align}
For estimating ${J_{24}}$, we use the methods similar to getting $(\ref{eq3.9})$ to obtain
\begin{align}\label{eq3.12}
&{{J_{24}}}(z)\notag\\ 
\lesssim&\sum\limits_{j = 1}^\infty  {{{\left( {{2^{j + 1}}r} \right)}^{ - 2n}}\prod\limits_{i = 1}^2 {\int_{{2^{j + 1}}B} {\left| {\left( {{b_i}\left( {{y_i}} \right) - {\mu _i}} \right){f_i}\left( {{y_i}} \right)} \right|d{y_i}} } }\notag\\ 
\lesssim& \int_{2r}^\infty  {\prod\limits_{i = 1}^2 {{{\left\| {{f_i}} \right\|}_{{L^{{p_i}}}(B\left( {{x_0},t} \right),{\omega _i}dx)}}} {{\left\| {{b_i} - {\mu _i}} \right\|}_{{L^{{p_i}^\prime }}(B\left( {{x_0},t} \right),{\omega _i}^{1 - {p_i}^\prime }dx)}}\frac{{dt}}{{{t^{2n + 1}}}}}\notag\\
\lesssim& \left( {\prod\limits_{i = 1}^2 {{{\left\| {{b_i}} \right\|}_{BMO}}} } \right)\int_{2r}^\infty  {{{\left( {1 + {\log \frac{t}{r}}} \right)}^2}\left( {\prod\limits_{i = 1}^m {\left( {{{\left\| {{f_i}} \right\|}_{{L^{{p_i}}}(B\left( {{x_0},t} \right),{\omega _i}dx)}}{\omega _i}{{\left( {B\left( {{x_0},t} \right)} \right)}^{ - \frac{1}{{{p_i}}}}}} \right)} } \right)\frac{{dt}}{t}}.
\end{align}
The estimates of ${J_{24}}$ are given as follows
\begin{align}\label{eq3.13}
&{\left\| {{J_{24}}} \right\|_{{L^p}\left( {B,{v}dx} \right)}}\notag\\ 
\lesssim&v{\left( B \right)^{\frac{1}{p}}}\left( {\prod\limits_{i = 1}^2 {{{\left\| {{b_i}} \right\|}_{BMO}}} } \right)\int_{2r}^\infty  {{{\left( {1 + \log \frac{t}{r}} \right)}^2}\left( {\prod\limits_{i = 1}^m {\left( {{{\left\| {{f_i}} \right\|}_{{L^{{p_i}}}(B\left( {{x_0},t} \right),{\omega _i}dx)}}{\omega _i}{{\left( {B\left( {{x_0},t} \right)} \right)}^{ - \frac{1}{{{p_i}}}}}} \right)} } \right)\frac{{dt}}{t}}.
\end{align}
By using $(\ref{eq3.8})$, $(\ref{eq3.10})$, $(\ref{eq3.11})$ and $(\ref{eq3.13})$, we can obtain the estimates of $J_{2}$:
\begin{align}\label{eq3.14}
&{J_2}({x_0},r)\notag\\ 
\lesssim& \left( {\prod\limits_{i = 1}^2 {{{\left\| {{b_i}} \right\|}_{BMO}}} } \right)\int_{2r}^\infty  {{{\left( {1 + {\log \frac{t}{r}}} \right)}^2}\left( {\prod\limits_{i = 1}^m {\left( {{{\left\| {{f_i}} \right\|}_{{L^{{p_i}}}(B\left( {{x_0},t} \right),{\omega _i}dx)}}{\omega _i}{{\left( {B\left( {{x_0},t} \right)} \right)}^{ - \frac{1}{{{p_i}}}}}} \right)} } \right)\frac{{dt}}{t}}.
\end{align}
As for the estimates of ${J_{4}}$, we can first use a decomposition similar to $(\ref{eq3.7})$, and then we can make estimates similar to the above for each part separately. Last, we can obtain (\ref{eq3.5}).
\end{proof}

\subsection{Proof of Theorem \ref{thm9}}
\begin{proof}[Proof:]
	Similar to the proofs of Theorem \ref{thm1}, we still use the piecewise integration technique to perform the following estimates. For any ${f_i} \in {M^{{{{p_i}( \cdot )}},{\varphi _{1i}}}}\left( {{\omega _i}} \right)$, $B=B(x_0,r)$, by using the Lemma \ref{Gu6} multiple times, we have
	\begin{align}
	&\sum\limits_{j = 1}^\infty  {\prod\limits_{i = 1}^m {\frac{1}{{\left| {{2^{j + 1}}B} \right|}}} \int_{{2^{j + 1}}B} {\left| {{f_i}({y_i})} \right|d{y_i}} }\notag\\
	\lesssim&\sum\limits_{j = 1}^\infty  {{{\left( {{2^{j + 1}}r} \right)}^{ - mn}}\prod\limits_{i = 1}^m {{{\left\| {{f_i}} \right\|}_{{L^{{p_i}( \cdot )}}({2^{j + 1}}B,{\omega _i}dx)}}{{\left\| {{\omega _i}^{ - 1}} \right\|}_{{L^{{p_i}^\prime ( \cdot )}}({2^{j + 1}}B)}}} }\notag\\
	\le&\sum\limits_{j = 1}^\infty  {{{\left( {{2^{j + 1}}r} \right)}^{ - mn - 1}}\int_{{2^{j + 1}}r}^{{2^{j + 2}}r} {\prod\limits_{i = 1}^m {{{\left\| {{f_i}} \right\|}_{{L^{{p_i}( \cdot )}}(B({x_0},t),{\omega _i}dx)}}{{\left\| {{\omega _i}^{ - 1}} \right\|}_{{L^{{p_i}^\prime ( \cdot )}}(B({x_0},t))}}} dt} }\notag\\
	\lesssim&\sum\limits_{j = 1}^\infty  {\int_{{2^{j + 1}}r}^{{2^{j + 2}}r} {\prod\limits_{i = 1}^m {{{\left\| {{f_i}} \right\|}_{{L^{{p_i}( \cdot )}}(B({x_0},t),{\omega _i}dx)}}{{\left\| {{\omega _i}^{ - 1}} \right\|}_{{L^{{p_i}^\prime ( \cdot )}}(B({x_0},t))}}} \frac{{dt}}{{{t^{mn + 1}}}}} }\notag\\
	\le&\int_{2r}^\infty  {\prod\limits_{i = 1}^m {{{\left\| {{f_i}} \right\|}_{{L^{{p_i}( \cdot )}}(B({x_0},t),{\omega _i}dx)}}{{\left\| {{\omega _i}^{ - 1}} \right\|}_{{L^{{p_i}^\prime ( \cdot )}}(B({x_0},t))}}} \frac{{dt}}{{{t^{mn + 1}}}}}\notag\\
	\lesssim&\int_{2r}^\infty  {\prod\limits_{i = 1}^m {{{\left\| {{f_i}} \right\|}_{{L^{{p_i}( \cdot )}}(B({x_0},t),{\omega _i}dx)}}\left\| {{\omega _i}} \right\|_{{L^{{p_i}( \cdot )}}(B({x_0},t))}^{{\rm{ - }}1}} \frac{{dt}}{t}} ,
	\end{align}
	where the last step holds because of the definition of ${\omega _i} \in {A_{{p_i}( \cdot )}}$. Thus we obtain 
	\begin{align}
	&\mathop {\sup }\limits_{x_0 \in {\rn},r > 0} {\varphi _2}{({x_0},r)^{ - 1}}\left\| {{v}} \right\|_{{L^{{p_i}^\prime ( \cdot )}}(B({x_0},t))}^{{\rm{ - }}1}{\left\| {\sum\limits_{j = 1}^\infty  {\prod\limits_{i = 1}^m {\frac{1}{{\left| {{2^{j + 1}}B} \right|}}} \int_{{2^{j + 1}}B} {\left| {{f_i}({y_i})} \right|d{y_i}} } } \right\|_{{L^{{p_i}( \cdot )}}(B,{v}dx)}}\notag\\
	\lesssim&\mathop {\sup }\limits_{{x_0} \in {\rn},r > 0} {\varphi _2}{\left( {{x_0},r} \right)^{ - 1}}\int_r^\infty  {\prod\limits_{i = 1}^m {\left( {{{\left\| {{f_i}} \right\|}_{{L^{{p_i}( \cdot )}}(B({x_0},t),{\omega _i}dx)}}\left\| {{\omega _i}} \right\|_{{L^{{p_i}( \cdot )}}(B({x_0},t))}^{{\rm{ - }}1}} \right)\frac{{dt}}{t}} }\notag\\
	\lesssim&\mathop {\sup }\limits_{{x_0} \in {\rn},r > 0} \prod\limits_{i = 1}^m {\left( {{\varphi _{1i}}{{\left( {{x_0},r} \right)}^{ - 1}}{{\left\| {{f_i}} \right\|}_{{L^{{p_i}( \cdot )}}(B({x_0},r),{\omega _i}dx)}}\left\| {{\omega _i}} \right\|_{{L^{{p_i}( \cdot )}}(B({x_0},t))}^{{\rm{ - }}1}} \right)} \notag\\
	\le& \prod\limits_{i = 1}^m {{{\left\| {{f_i}} \right\|}_{{M^{{p_i}( \cdot ),{\varphi _{1i}}}}({\omega _i})}}},
	\end{align}
	where the second inequality holds since we use (\ref{con5}) and (\ref{Gu21}) in Lemma \ref{Gu2}.	
	
	The operations after that are the same as in the proofs of Theorem \ref{thm1} which we omit here.
\end{proof}

\subsection{Proof of Theorem \ref{thm10}}
Similar to proof of Theorem \ref{thm2}, for the sake of simplicity, we only consider the case when $m=2$.
\begin{proof}[Proof:]
	For any ball $B=B(x_0,r)$, let $f_i=f^0_i+f^{\infty}_i$, where $f^0_i=f_i\chi_{2B}$, $i=1,\ldots,m$ and $\chi_{2B}$ denotes the characteristic function of $2B$. Then, we have
	\begin{equation*}
	\begin{split}
	&\left\| v \right\|_{{L^{p( \cdot )}}(B({x_0},t))}^{ - 1}{\left\| {T({f_1},{f_2})} \right\|_{{L^{p( \cdot )}}(B,{v}dx)}}\\
	\le&\left\| v \right\|_{{L^{p( \cdot )}}(B({x_0},t))}^{ - 1}{\left\| {{T_{\prod {\vec b} }}(f_1^0,f_2^0)} \right\|_{{L^{p( \cdot )}}\left( {B,{v}dx} \right)}} + \left\| v \right\|_{{L^{p( \cdot )}}(B({x_0},t))}^{ - 1}{\left\| {{T_{\prod {\vec b} }}(f_1^0,f_2^\infty )} \right\|_{{L^{p( \cdot )}}\left( {B,{v}dx} \right)}}\\
	+&\left\| v \right\|_{{L^{p( \cdot )}}(B({x_0},t))}^{ - 1}{\left\| {{T_{\prod {\vec b} }}(f_1^\infty ,f_2^0)} \right\|_{{L^{p( \cdot )}}\left( {B,{v}dx} \right)}} + \left\| v \right\|_{{L^{p( \cdot )}}(B({x_0},t))}^{ - 1}{\left\| {{T_{\prod {\vec b} }}(f_1^\infty ,f_2^\infty )} \right\|_{{L^{p( \cdot )}}\left( {B,{v}dx} \right)}}\\
	:=&{L_1}({x_0},r) + {L_2}({x_0},r) + {L_3}({x_0},r) + {L_4}({x_0},r).
	\end{split}
	\end{equation*}
	We first claim that
	\begin{equation}\label{eq3.6}
	{L_i}({x_0},r) \le C\int_{2r}^\infty  {{{\left( {1 + \log \frac{t}{r}} \right)}^2}\prod\limits_{i = 1}^2 {\left( {{{\left\| {{f_i}} \right\|}_{{L^{{p_i}( \cdot )}}(B({x_0},t),{\omega _i}dx)}}\left\| {{\omega _i}} \right\|_{{L^{{p_i}( \cdot )}}(B({x_0},t))}^{ - 1}} \right)\frac{{dt}}{t}} }, i=1, 2, 3, 4.,
	\end{equation}
	where $C$ is independent of $r$, $x_0$ and $\vec f$. 
	
	When $(\ref{eq3.6})$ are valid, we have
	\begin{align*}
	&{\left\| {{{T_{\prod {\vec b} }}}(\vec f)} \right\|_{{M^{{p( \cdot )},{\varphi _2}}}\left( {{v}} \right)}}\\
	\le& \mathop {\sup }\limits_{{x_0} \in \rn,r > 0} {\varphi _2}{\left( {{x_0},r} \right)^{ - 1}}\sum {L_i\left( {x_0},r \right)}\\
	\lesssim& \mathop {\sup }\limits_{{x_0} \in {\rm{\rn}},r > 0} {\varphi _2}{\left( {{x_0},r} \right)^{ - 1}}\int_r^\infty  {{{\left( {1 + {\log \frac{t}{r}}} \right)}^2}\prod\limits_{i = 1}^2 {\left( {{{\left\| {{f_i}} \right\|}_{{L^{{p_i(\cdot)}}}(B({x_0},t),{\omega _i}dx)}}\left\| {{\omega _i}} \right\|_{{L^{{p_i}( \cdot )}}(B({x_0},t))}^{ - 1}}\right)\frac{{dt}}{t}} }\\
	\lesssim& \mathop {\sup }\limits_{{x_0} \in \rn,r > 0} \prod\limits_{i = 1}^2 {\left( {{\varphi _{1i}}{{\left( {{x_0},r} \right)}^{ - 1}}{{\left\| {{f_i}} \right\|}_{{L^{{p_i(\cdot)}}}(B({x_0},r),{\omega _i}dx)}}\left\| {{\omega _i}} \right\|_{{L^{{p_i}( \cdot )}}(B({x_0},t))}^{ - 1}}\right)}\\
	\le& \prod\limits_{i = 1}^2 {{{\left\| {{f_i}} \right\|}_{{M^{{p_i(\cdot)},{\varphi _{1i}}}}({\omega _i})}}},
	\end{align*}
	where the third inequality holds since we use Lemma \ref{Gu3} and $(\ref{con6})$ to make (\ref{Gu31}) holds.
	
	From the above proofs, we only need to verify the correctness of (\ref{eq3.6}).
	
	Due to ${T_{\prod \vec b }} \in LB\left( {\prod\limits_{i = 1}^m {{M^{{p_i(\cdot)},{\varphi _{1i}}}}\left( {{\omega _i}} \right)}  \to {M^{p(\cdot),{\varphi _2}}}\left( {{v}} \right)} \right)$, we do not need to estimate $L_1$ anymore. Note that ${L_{3}}$ is similar to ${L_{2}}$, so we merely consider to estimate ${L_{2}}$ and ${L_{4}}$.
	\begin{align}\label{ieq3.10}
	& \left| {{T_{\prod {\vec b} }}(f_1^0,f_2^\infty )\left( z \right)} \right|\notag\\
	\le& \left| {\left( {{b_1}\left( z \right) - {\mu _1}} \right)\left( {{b_2}\left( z \right) - {\mu _2}} \right)T(f_1^0,f_2^\infty )\left( z \right)} \right| + \left| {\left( {{b_1}\left( z \right) - {\mu _1}} \right)T(f_1^0,\left( {{b_2} - {\mu _2}} \right)f_2^\infty )\left( z \right)} \right|\notag\\
	+& \left| {\left( {{b_2}\left( z \right) - {\mu _2}} \right)T(\left( {{b_1} - {\mu _1}} \right)f_1^0,f_2^\infty )\left( z \right)} \right| + \left| {T(\left( {{b_1} - {\mu _1}} \right)f_1^0,\left( {{b_2} - {\mu _2}} \right)f_2^\infty )\left( z \right)} \right|\notag\\ 
	:=& {L_{21}}\left( z \right) + {L_{22}}\left( z \right) + {L_{23}}\left( z \right) + {L_{24}}\left( z \right),
	\end{align}
	where ${\mu _j} = {\left( {{b_j}} \right)_B}$.
	
	Using Lemma \ref{Gu6}, \ref{cen6} and ${\left\| {{v^{\frac{1}{2}}}} \right\|_{{L^{2p( \cdot )}}(B\left( {{x_0},t} \right))}} = \left\| v \right\|_{{L^{p( \cdot )}}(B\left( {{x_0},t} \right))}^{\frac{1}{2}}$, we have
	\begin{align}\label{ieq3.3}
	&{\left\| {{L_{21}}} \right\|_{{L^{p( \cdot )}}\left( {B,{v}dx} \right)}}\notag\\
	\le& {\left\| {\left( {{b_1} - {\mu _1}} \right)\left( {{b_2} - {\mu _2}} \right)} \right\|_{{L^{p( \cdot )}}\left( {B,{v}dx} \right)}}{\left\| {T(f_1^0,f_2^\infty )} \right\|_{{L^\infty }\left( B \right)}}\notag\\
	\lesssim&\prod\limits_{i = 1}^2 {\left( {{{\left\| {{b_i} - {\mu _i}} \right\|}_{{L^{2p(\cdot)}}(B,{v^{\frac{1}{2}}}dx)}}} \right)} \int_{2r}^\infty  {\left( {\prod\limits_{i = 1}^m {\left( {{{\left\| {{f_i}} \right\|}_{{L^{{p_i}}}(B\left( {{x_0},t} \right),{\omega _i}dx)}}\left\| {{\omega _i}} \right\|_{{L^{{p_i}( \cdot )}}(B({x_0},t))}^{ - 1}} \right)} } \right)\frac{{dt}}{t}}\notag\\ 
	\lesssim&\left( {\prod\limits_{i = 1}^2 {{{\left\| {{b_i}} \right\|}_{BMO}}} } \right){\left\| v \right\|_{{L^{p( \cdot )}}(B)}}\int_{2r}^\infty  {\left( {\prod\limits_{i = 1}^m {\left( {{{\left\| {{f_i}} \right\|}_{{L^{{p_i}}}(B\left( {{x_0},t} \right),{\omega _i}dx)}}\left\| {{\omega _i}} \right\|_{{L^{{p_i}( \cdot )}}(B({x_0},t))}^{ - 1}} \right)} } \right)\frac{{dt}}{t}}.
	\end{align}
	For estimating ${L_{22}}$, note that $T \in LS\left( {\prod\limits_{i = 1}^m {{M^{{p_i(\cdot)},{\varphi _{1i}}}}\left( {{\omega _i}} \right)} } \right)$ and we use the piecewise integration technique again to get
	\begin{align}\label{ieq3.4}
	&\left| {T(f_1^0,\left( {{b_2} - {\mu _2}} \right)f_2^\infty )\left( z \right)} \right|\notag\\
	\lesssim&\sum\limits_{j = 1}^\infty  {{{\left( {{2^{j + 1}}r} \right)}^{ - 2n}}\int_{{2^{j + 1}}B} {\int_{{{2^{j + 1}}B}} {\left| {{f_1}\left( {{y_1}} \right)\left( {{b_2}\left( {{y_2}} \right) - {\mu _2}} \right){f_2}\left( {{y_2}} \right)} \right|d{y_1}} d{y_2}} }\notag\\ 
	\lesssim&\sum\limits_{j = 1}^\infty  {{{\left( {{2^{j + 1}}r} \right)}^{ - 2n}}{{\left\| {{f_1}} \right\|}_{{L^{{p_1}( \cdot )}}({2^{j + 1}}B,{\omega _1}dx)}}{{\left\| {{\omega _1}^{ - 1}} \right\|}_{{L^{{p_1}^\prime ( \cdot )}}({2^{j + 1}}B)}}{{\left\| {{f_2}} \right\|}_{{L^{{p_2}( \cdot )}}({2^{j + 1}}B,{\omega _2}dx)}}{{\left\| {{b_2} - {\mu _2}} \right\|}_{{L^{{p_2}^\prime ( \cdot )}}({2^{j + 1}}B,{\omega _2}^{ - 1}dx)}}}\notag\\
	\le&\sum\limits_{j = 1}^\infty  {{{\left( {{2^{j + 1}}r} \right)}^{ - 2n - 1}}\int_{{2^{j + 1}}r}^{{2^{j + 2}}r} {{{\left\| {{\omega _1}^{ - 1}} \right\|}_{{L^{{p_1}^\prime ( \cdot )}}(B\left( {{x_0},t} \right))}}{{\left\| {{b_2} - {\mu _2}} \right\|}_{{L^{{p_2}^\prime ( \cdot )}}(B\left( {{x_0},t} \right),{\omega _2}^{ - 1}dx)}}\prod\limits_{i = 1}^2 {{{\left\| {{f_i}} \right\|}_{{L^{{p_i}( \cdot )}}(B\left( {{x_0},t} \right),{\omega _i}dx)}}} dt} } \notag\\
	\lesssim&\sum\limits_{j = 1}^\infty  {\int_{{2^{j + 1}}r}^{{2^{j + 2}}r} {{{\left\| {{\omega _1}^{ - 1}} \right\|}_{{L^{{p_1}^\prime ( \cdot )}}(B\left( {{x_0},t} \right))}}{{\left\| {{b_2} - {\mu _2}} \right\|}_{{L^{{p_2}^\prime ( \cdot )}}(B\left( {{x_0},t} \right),{\omega _2}^{ - 1}dx)}}\prod\limits_{i = 1}^2 {{{\left\| {{f_i}} \right\|}_{{L^{{p_i}( \cdot )}}(B\left( {{x_0},t} \right),{\omega _i}dx)}}} \frac{{dt}}{{{t^{2n + 1}}}}} }\notag\\
	\le&\int_{2r}^\infty  {{{\left\| {{\omega _1}^{ - 1}} \right\|}_{{L^{{p_1}^\prime ( \cdot )}}(B\left( {{x_0},t} \right))}}{{\left\| {{b_2} - {\mu _2}} \right\|}_{{L^{{p_2}^\prime ( \cdot )}}(B\left( {{x_0},t} \right),{\omega _2}^{ - 1}dx)}}\prod\limits_{i = 1}^2 {{{\left\| {{f_i}} \right\|}_{{L^{{p_i}( \cdot )}}(B\left( {{x_0},t} \right),{\omega _i}dx)}}} \frac{{dt}}{{{t^{2n + 1}}}}}\notag\\
	\lesssim&\left( {{{\left\| {{b_2}} \right\|}_{BMO}}} \right)\int_{2r}^\infty  {\left( {1 + \log \frac{t}{r}} \right)\left( {\prod\limits_{i = 1}^m {\left( {{{\left\| {{f_i}} \right\|}_{{L^{{p_i}( \cdot )}}(B\left( {{x_0},t} \right),{\omega _i}dx)}}\left\| {{\omega _i}} \right\|_{{L^{{p_i}^\prime ( \cdot )}}(B\left( {{x_0},t} \right))}^{ - 1}} \right)} } \right)\frac{{dt}}{t}}.
	\end{align}
	where the last step holds because of the definition of ${\omega _i} \in {A_{{p_i}( \cdot )}}$.
	Combining with Lemma \ref{Gu4}, we can easily see that
	\begin{align}\label{ieq3.5}
	&{\left\| {{L_{22}}} \right\|_{{L^{p( \cdot )}}\left( {B,{v}dx} \right)}}\notag\\
	\le& {\left\| {\left( {{b_1} - {\mu _1}} \right)} \right\|_{{L^{p( \cdot )}}\left( {B,{v}dx} \right)}}{\left\| {T(f_1^0,\left( {{b_2} - {\mu _2}} \right)f_2^\infty )} \right\|_{{L^\infty }\left( B \right)}}\notag\\
	\lesssim&{\left\| v \right\|_{{L^{p( \cdot )}}(B)}}\left( {\prod\limits_{i = 1}^2 {{{\left\| {{b_i}} \right\|}_{BMO}}} } \right)\int_{2r}^\infty  {\left( {1 + {\log \frac{t}{r}}} \right)\left( {\prod\limits_{i = 1}^m {\left( {{{\left\| {{f_i}} \right\|}_{{L^{{p_i( \cdot )}}}(B\left( {{x_0},t} \right),{\omega _i}dx)}}\left\| {{\omega _i}} \right\|_{{L^{{p_i}( \cdot )}}(B({x_0},t))}^{ - 1}} \right)} } \right)\frac{{dt}}{t}}.
	\end{align}
	Similarly, we also have
	\begin{align}\label{ieq3.6}
	&{\left\| {{L_{23}}} \right\|_{{L^{p( \cdot )}}\left( {B,{v}dx} \right)}}\notag\\
	\lesssim&{\left\| v \right\|_{{L^{p( \cdot )}}(B(x,r))}}\left( {\prod\limits_{i = 1}^2 {{{\left\| {{b_i}} \right\|}_{BMO}}} } \right)\int_{2r}^\infty  {\left( {1 + \log \frac{t}{r}} \right)\left( {\prod\limits_{i = 1}^m {\left( {{{\left\| {{f_i}} \right\|}_{{L^{{p_i( \cdot )}}}(B\left( {{x_0},t} \right),{\omega _i}dx)}}\left\| {{\omega _i}} \right\|_{{L^{{p_i}^\prime ( \cdot )}}(B\left( {{x_0},t} \right))}^{ - 1}} \right)} } \right)\frac{{dt}}{t}}.
	\end{align}
	For estimating ${L_{24}}$, combining with Lemma \ref{cen6}, we use the methods similar to getting $(\ref{ieq3.4})$ to obtain
	\begin{align}\label{ieq3.7}
	&{{L_{24}}}(z)\notag\\ 
	\lesssim&\sum\limits_{j = 1}^\infty  {{{\left( {{2^{j + 1}}r} \right)}^{ - 2n}}\prod\limits_{i = 1}^2 {\int_{{2^{j + 1}}B} {\left| {\left( {{b_i}\left( {{y_i}} \right) - {\mu _i}} \right){f_i}\left( {{y_i}} \right)} \right|d{y_i}} } }\notag\\ 
	\lesssim& \int_{2r}^\infty  {\prod\limits_{i = 1}^2 {{{\left\| {{f_i}} \right\|}_{{L^{{p_i(\cdot)}}}(B\left( {{x_0},t} \right),{\omega _i}dx)}}} {{\left\| {{b_i} - {\mu _i}} \right\|}_{{L^{{p_i}^\prime (\cdot)}}(B\left( {{x_0},t} \right),{\omega _i}^{-1}dx)}}\frac{{dt}}{{{t^{2n + 1}}}}}\notag\\
	\lesssim& \left( {\prod\limits_{i = 1}^2 {{{\left\| {{b_i}} \right\|}_{BMO}}} } \right)\int_{2r}^\infty  {{{\left( {1 + \log \frac{t}{r}} \right)}^2}\left( {\prod\limits_{i = 1}^m {\left( {{{\left\| {{f_i}} \right\|}_{{L^{{p_i}( \cdot )}}(B\left( {{x_0},t} \right),{\omega _i}dx)}}\left\| {{\omega _i}} \right\|_{{L^{{p_i}^\prime ( \cdot )}}(B\left( {{x_0},t} \right))}^{ - 1}} \right)} } \right)\frac{{dt}}{t}}.
	\end{align}
	The estimates of ${L_{24}}$ are given as follows
	\begin{align}\label{ieq3.8}
	&{\left\| {{L_{24}}} \right\|_{{L^p}\left( {B,{v}dx} \right)}}\notag\\ 
	\lesssim&{\left\| v \right\|_{{L^{p( \cdot )}}(B(x,r))}}\left( {\prod\limits_{i = 1}^2 {{{\left\| {{b_i}} \right\|}_{BMO}}} } \right)\int_{2r}^\infty  {{{\left( {1 + \log \frac{t}{r}} \right)}^2}\left( {\prod\limits_{i = 1}^m {\left( {{{\left\| {{f_i}} \right\|}_{{L^{{p_i}( \cdot )}}(B\left( {{x_0},t} \right),{\omega _i}dx)}}\left\| {{\omega _i}} \right\|_{{L^{{p_i}^\prime ( \cdot )}}(B\left( {{x_0},t} \right))}^{ - 1}} \right)} } \right)\frac{{dt}}{t}}.
	\end{align}
	By using $(\ref{ieq3.3})$, $(\ref{ieq3.5})$, $(\ref{ieq3.6})$ and $(\ref{ieq3.8})$, we can obtain the estimates of $L_{2}$:
	\begin{align}\label{ieq3.9}
	&{L_2}({x_0},r)\notag\\ 
	\lesssim&\left( {\prod\limits_{i = 1}^2 {{{\left\| {{b_i}} \right\|}_{BMO}}} } \right)\int_{2r}^\infty  {{{\left( {1 + \log \frac{t}{r}} \right)}^2}\left( {\prod\limits_{i = 1}^m {\left( {{{\left\| {{f_i}} \right\|}_{{L^{{p_i}( \cdot )}}(B\left( {{x_0},t} \right),{\omega _i}dx)}}\left\| {{\omega _i}} \right\|_{{L^{{p_i}^\prime ( \cdot )}}(B\left( {{x_0},t} \right))}^{ - 1}} \right)} } \right)\frac{{dt}}{t}}.
	\end{align}
	As for the estimates of ${L_{4}}$, we can first use a decomposition similar to $(\ref{ieq3.10})$, and then we can make estimates similar to the above for each part separately. Thus, we can obtain (\ref{eq3.6}).
\end{proof}
\section{Some Applications}\label{sec4}
To solve question \ref{que2}, in this section, we give the boundedness of some classical multilinear operators and their commutators on generalized weighted Morrey spaces as some applications of the main theorems.

\begin{center}
\textbf{I: Multilinear vector-valued Calder\'on-Zygmund operators}
\end{center}
Let $\mathscr B$ is a quasi-Banach space, $0 < p < \infty$. The $\mathscr B$-valued strongly measurable
weighted function spaces are defined by
\begin{align*}
{L^p}(\omega ,\mathscr B) =& \{ f:{\left\| f \right\|_{{L^p}(\omega ,\mathscr B)}}: = {\left\| {{{\left\| f \right\|}_\mathscr B}} \right\|_{{L^p}(\omega )}} < \infty \};\\
{M^{p,\varphi }}(\omega ,\mathscr B) =& \{ f:{\left\| f \right\|_{{L^p}(\omega ,\mathscr B)}}: = {\left\| {{{\left\| f \right\|}_\mathscr B}} \right\|_{{M^{p,\varphi }}(\omega )}} < \infty \}.
\end{align*}
Let an operator-valued function $Q:(\mm \backslash E) \to B(\cc,\mathscr B),$ $E = \{ (x,\vec y) \in \mm:x = {y_1} =  \cdots  = {y_m}\},$ we define the $m$-linear $\mathscr B$-valued Calder\'on-Zygmund operator $T$ by
\begin{equation}
T(\vec f)(x) = \int_{{\nm}} {(Q(x,\vec y))(\prod\limits_{j = 1}^m {{f_j}({y_j}))d{y_1} \cdots d{y_m}} }.
\end{equation}
for any $\vec f \in {C_c^\infty}({\rn}) \times  \cdots  \times {C_c^\infty}({\rn})$ and all $x \notin \bigcap\limits_{j = 1}^m {{\rm{supp}}{f_j}},$
and assume that $T$ can be extended to be a bounded operator from ${L^{{q_1}}} \times  \cdots {L^{{q_m}}}$ to ${L^q(\rn,\mathscr B)}$, for some $1 \le {q_1} \cdots , {q_m} \le \infty, \frac{1}{q} = \sum\limits_{k = 1}^m {\frac{1}{{{q_k}}}}>0$. The kernel $Q$ is called $m$-linear $\mathscr B$-valued Calder\'on-Zygmund kernel which satisfies, for some $\varepsilon,C>0$,
\begin{enumerate}
	\item[\emph{(i)}]${\left\| {Q(x,{y_1}, \ldots ,{y_m})} \right\|_{B\left( {\cc,\mathscr B} \right)}} \le \frac{C}{{{{(\sum\limits_{j = 1}^m | x - {y_j}|)}^{mn}}}};$
	\item[\emph{(ii)}]
	${\left\| {Q(x,{y_1}, \ldots ,{y_i}, \ldots ,{y_m}) - Q(x,{y_1}, \ldots ,{y_{i'}}, \ldots ,{y_m})} \right\|_{B\left( {\cc,\mathscr B} \right)}} \le \frac{{C|{y_i} - {y_{i'}}{|^\varepsilon }}}{{{{(\sum\limits_{j = 1}^m | x - {y_j}|)}^{mn + \varepsilon }}}}$\\
	whenever $|x-x'|\leq\frac{1}{2}\max_{1\leq j \leq m}|x-y_j|$;
	\item[\emph{(iii)}]
	${\left\| {Q(x,{y_1}, \ldots ,{y_m}) - Q(x',{y_1}, \ldots ,{y_m})} \right\|_{B\left( {\cc,\mathscr B} \right)}} \le \frac{{C|x - x'{|^\varepsilon }}}{{{{(\sum\limits_{j = 1}^m | x - {y_j}|)}^{mn + \varepsilon }}}}$\\
	whenever
	$|x-x'|\leq\frac{1}{2}\sum_{j=1}^m|x-y_j|$.
\end{enumerate}

\begin{lem}[\cite{xue6}]\label{xue6}
Let $m\in \n$ and $T$ be an $m$-linear $\mathscr B$-valued Calder\'on-Zygmund operator. Set $p_1,\ldots,p_m\in[1,\infty)$, $1/p=\sum_{k=1}^m 1/{p_k}$ and $\vec{\omega}=(\omega_1,\ldots,\omega_m)\in A_{\vec{P}}$. The following results hold:
\begin{enumerate}[(i)]
\item If $\mathop {\min }\limits_{1 \le i \le m} \{ {p_i}\}  > 1,$ then there exists a constant ${C}$, independent of $\vec f$, such that 
\begin{align*}
{\left\| {T(\vec f)} \right\|_{{L^{p}}({v_{\vec \omega }},\mathscr B)}} \le& C{\prod\limits_{i = 1}^m {\left\| {{f_i}} \right\|} _{{L^{{p_i}}}({\omega _i})}};\\
{\left\| {{T_{\prod {\vec b} }}(\vec f)} \right\|_{{L^p}({v_{\vec \omega }},\mathscr B)}} \le& C\prod\limits_{i = 1}^m {{{\left\| {{b_i}} \right\|}_{BMO}}{{\left\| {{f_i}} \right\|}_{{L^{{p_i}}}({\omega _i})}}};\\
{\left\| {{T_{\sum {\vec b} }}(\vec f)} \right\|_{{L^p}({v_{\vec \omega }},\mathscr B)}} \le& C(\mathop {\max }\limits_{1 \le i \le n} {\left\| {{b_i}} \right\|_{BMO}})\prod\limits_{i = 1}^m {{{\left\| {{f_i}} \right\|}_{{L^{{p_i}}}({\omega _i})}}}.
\end{align*}
\item If $\mathop {\min }\limits_{1 \le i \le m} \{ {p_i}\}  = 1,$ then there exists a constant ${C}$, independent of $\vec f$, such that 
\begin{equation*}
{\left\| {{T}(\vec f)} \right\|_{W{L^{p}}({v_{\vec \omega }},\mathscr B)}} \le C{\prod\limits_{i = 1}^m {\left\| {{f_i}} \right\|} _{{L^{{p_i}}}({\omega _i})}}.
\end{equation*}
\end{enumerate}
\end{lem}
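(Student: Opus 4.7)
The plan is to establish Lemma \ref{xue6} as a vector-valued extension of the classical multilinear Calder\'on-Zygmund theory of Lerner et al.\ \cite{Lerner}, by treating the operator-valued kernel $Q$ in exactly the same way one treats scalar kernels, with $|\cdot|$ systematically replaced by the operator norm $\|\cdot\|_{B(\cc,\mathscr B)}$. The first step is to verify an unweighted weak-type endpoint bound: if $T$ is bounded from $L^{q_1}\times\cdots\times L^{q_m}$ to $L^q(\rn,\mathscr B)$ for some initial indices, then $T$ is of multilinear weak type $(1,\ldots,1;1/m)$ in the Banach-valued sense. The argument runs via a Calder\'on-Zygmund decomposition: fix $\lambda>0$, decompose each $f_i=g_i+b_i$ at level $\lambda^{p/p_i}$, bound the good-part contribution by the initial $L^{q_i}\to L^q(\mathscr B)$ estimate, and dominate the bad-part $\mathscr B$-norm pointwise by the regularity condition (ii) on $Q$ via the usual cancellation against the mean.

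Once the unweighted endpoint is in hand, the weighted strong-type bounds of part (i) and the weak-type bound of part (ii) follow from a pointwise sparse domination principle. I would adapt the stopping-time argument of Lerner--Nazarov (or Conde-Alonso--Rey) to show: for any admissible $\vec f$, there exists a sparse family $\mathcal S$ of cubes such that
\begin{equation*}
\|T(\vec f)(x)\|_{\mathscr B}\,\lesssim\,\mathcal A_{\mathcal S}(\vec f)(x):=\sum_{Q\in\mathcal S}\prod_{i=1}^m\langle|f_i|\rangle_{Q}\,\chi_Q(x).
\end{equation*}
Since $\mathcal A_{\mathcal S}$ is known to satisfy $\|\mathcal A_{\mathcal S}(\vec f)\|_{L^p(v_{\vec\omega})}\lesssim\prod_i\|f_i\|_{L^{p_i}(\omega_i)}$ whenever $\vec\omega\in A_{\vec P}$ (by the Lerner--Moen theorem combined with Lemma \ref{lem4} and Lemma \ref{cen4}), the strong-type claim follows immediately; the weak-type claim at $\min p_i=1$ comes from the corresponding weak $L^{p,\infty}$ estimate for $\mathcal A_{\mathcal S}$ under the same weight hypothesis.

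For the commutator estimates in part (i), I would employ the multilinear Cauchy integral trick of Coifman--Rochberg--Weiss. Setting
\begin{equation*}
F(\vec z)(\vec f)(x)\,:=\,e^{-\sum_j z_j b_j(x)}\,T\bigl(e^{z_1 b_1}f_1,\ldots,e^{z_m b_m}f_m\bigr)(x),
\end{equation*}
one writes ${T_{\prod \vec b}}(\vec f)$ as an $m$-fold contour integral of $F(\vec z)(\vec f)$ around the origin, reducing to weighted norm inequalities for $T$ with modified weights of the form $\prod_j e^{p_j\operatorname{Re}(z_j)b_j}\omega_j$. Since $b_j\in BMO$, the John--Nirenberg inequality guarantees these perturbed weights still lie in $A_{\vec P}$ for sufficiently small $|\vec z|$, so the already-established strong-type bound for $T$ applies uniformly and Minkowski's inequality for the contour integral closes the argument. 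The sum commutator ${T_{\sum \vec b}}$ is handled by linearizing, yielding a single $BMO$ factor in front.

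The main obstacle is the sparse domination step, because in the Banach-valued setting one needs a grand maximal truncation $\mathcal M_T^\#$, constructed from $\|\cdot\|_{\mathscr B}$-valued local oscillations, to be of weak type $(1/m,\ldots,1/m)$; this rests crucially on the operator-norm form of the kernel bounds (i)--(iii) and on a Kolmogorov-type inequality in $\mathscr B$-norm, which must be justified for a general quasi-Banach $\mathscr B$. Once this ingredient is in place, every subsequent step mirrors the scalar multilinear theory line by line and the three claimed estimates follow uniformly.
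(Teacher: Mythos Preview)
Your proposal is sound, and in fact it is more detailed than what the paper offers: the paper does not prove Lemma~\ref{xue6} at all but cites it from \cite{xue6} and remarks only that ``the proofs will be mostly copies of scalar-valued ones.'' So at the meta-level you and the paper agree entirely --- run the scalar multilinear Calder\'on--Zygmund machinery with $|\cdot|$ replaced by $\|\cdot\|_{B(\cc,\mathscr B)}$ throughout --- but the specific scalar template you choose is different. The paper implicitly points to the original arguments of \cite{Lerner} (endpoint Calder\'on--Zygmund decomposition, then the multilinear maximal function $\mathcal M$ and a Fefferman--Stein sharp-function / good-$\lambda$ route to the weighted strong bounds, with commutators handled through the sharp-function estimate $M^\#(T_{\Pi\vec b}(\vec f))\lesssim \prod\|b_i\|_{BMO}\,\mathcal M_r(\vec f)$). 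Your route via pointwise sparse domination of $\|T(\vec f)(x)\|_{\mathscr B}$ is more modern and arguably cleaner, since once the sparse bound is in hand all three weighted inequalities for $T$ (strong, weak, and commutator) are read off from known scalar sparse-form estimates; it also gives quantitative weight-constant dependence for free, which the classical argument does not.

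One technical caution: you invoke Minkowski's inequality on the Cauchy contour integral and a John--Nirenberg perturbation of $A_{\vec P}$, but the paper allows $\mathscr B$ to be merely \emph{quasi}-Banach and $p=(\sum 1/p_k)^{-1}$ may be less than $1$, so both the Bochner-integral triangle inequality and the holomorphic-family argument pick up quasi-norm constants that must be tracked. This is harmless here because the sparse form already dominates the commutator pointwise (there is a sparse operator $\mathcal A_{\mathcal S,\vec b}$ with logarithmic averages that controls $\|T_{\Pi\vec b}(\vec f)\|_{\mathscr B}$ directly), so you could bypass the Cauchy trick altogether and stay entirely within the sparse framework; that would also remove the need to check stability of $A_{\vec P}$ under $e^{zb}$-perturbations, which you correctly flag but which is not completely routine in the multilinear setting.
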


\begin{rem}
The proofs of Lemma $\ref{xue6}$ are similar to the version of the scalar-valued multilinear operators, and the proofs will be mostly copies of scalar-valued ones. We omit them.
\end{rem}
For any $\vec f \in \prod\limits_{i = 1}^m {{M^{{p_1},{\varphi _{1i}}}}({\omega _i})}$, $x \in \rn$, for every ball $D \ni x$, we define the $m$-linear $\mathscr B$-valued Calder\'on-Zygmund operator $T$ by
\begin{align}\label{BCZ}
T(\vec f)(x): = T(f_1^0, \ldots ,f_m^0)(x) + \sum\limits_{({\alpha _1}, \cdots ,{\alpha _m}) \ne 0} {T(f_1^{{\alpha _1}}, \ldots ,f_m^{{\alpha _m}})(x)},
\end{align} 
where ${\alpha _j} \in \{ 0,\infty \}$, $f_i^0 = {f_i}{\chi _{2D}}$ and $f_i^\infty  = {f_i}{\chi _{{{\left( {2D} \right)}^c}}}$.

It can be confirmed by the following lemma that the above definition are well-defined and independent of the choice of $D$, whose proofs are similar to Lemma 4.1 in \cite{Gu2}, which we omit here.
\begin{lem}[\cite{Gu2}]\label{Gu7}
	Let $\vec f \in \prod\limits_{i = 1}^m {{M^{{p_1},{\varphi _{1i}}}}({\omega _i})}$ be such that the left-hand side of $(\ref{BCZ})$ converges in $\mathscr B$ for any ball $D \subseteq \rn$. If $D_1, D_2 \subseteq \rn$ and $x \in D_1 \cap D_2$, then we have
	\begin{align*}
	{T_1}(\vec f)(x) = {T_2}(\vec f)(x),
	\end{align*}
	where ${T_i}$ denote the $m$-linear $\mathscr B$-valued Calder\'on-Zygmund operator corresponding to the ball $D_i$, respectively, $i=1,2$.
\end{lem}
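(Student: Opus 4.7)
The plan is to prove that for any two balls $D_1, D_2$ containing $x$, both $T_1(\vec f)(x)$ and $T_2(\vec f)(x)$ agree by comparing each with the decomposition based on a sufficiently large ball. Concretely, I would choose an auxiliary ball $D_3$ with $x \in D_3$ and $2D_3 \supseteq 2D_1 \cup 2D_2$, and prove $T_i(\vec f)(x) = T_3(\vec f)(x)$ for $i = 1, 2$; transitivity then delivers the lemma. So the whole problem reduces to the one-sided comparison $T_1(\vec f)(x) = T_3(\vec f)(x)$, the $D_2$ case being identical.

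For this comparison I would decompose each $f_j$ into three disjoint pieces $f_j = u_j + v_j + w_j$ where $u_j = f_j \chi_{2D_1}$, $v_j = f_j \chi_{2D_3 \setminus 2D_1}$ and $w_j = f_j \chi_{(2D_3)^c}$. The crucial geometric observation is that $x \in D_1 \subseteq 2D_1$, hence $x$ lies outside the supports of every $v_j$ and every $w_j$. Under this partition the truncations read $f_j^{0, D_1} = u_j$, $f_j^{\infty, D_1} = v_j + w_j$, $f_j^{0, D_3} = u_j + v_j$ and $f_j^{\infty, D_3} = w_j$, so both $T_1(\vec f)(x)$ and $T_3(\vec f)(x)$ are naturally expressible as sums over $\vec\alpha \in \{0,\infty\}^m$ of terms whose arguments are sums of the $u_j, v_j, w_j$.

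The core step is to expand both sides, by multilinearity, into the full sum over tuples $\vec\beta \in \{u, v, w\}^m$ of $m$ pieces, and then to observe that each of the $3^m$ matching terms coincides across the two decompositions. Multilinearity holds in two regimes: (a) for the all-$u$ tuple, the $u_j$'s are compactly supported functions in $L^{q_j}(\omega_j)$, so Lemma \ref{xue6} gives $T(u_1,\dots,u_m)(x)$ as a genuine $m$-linear extension; (b) for any tuple with at least one coordinate in $\{v, w\}$, the point $x$ is away from that coordinate's support, and $T$ admits the absolutely convergent kernel representation $T(g_1,\ldots,g_m)(x) = \int_{(\rn)^m} Q(x,\vec y)\prod_{j} g_j(y_j)\, d\vec y$, in which multilinearity is nothing but linearity of the integral in each slot. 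Each $\vec\beta$ appears exactly once on each side with the same value, so the two expansions are equal termwise.

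The main technical obstacle is justifying the absolute convergence of the kernel integrals in $\mathscr B$ (which both underwrites the multilinear expansion in regime (b) and allows one to interchange the finite sums with the integrals). This relies on the size estimate $\|Q(x, \vec y)\|_{B(\mathbb C,\mathscr B)}\lesssim (\sum_j |x - y_j|)^{-mn}$ coupled with integrability of $f_j$ on annular regions, by the same piecewise-integration bound used in \eqref{ieq3.1} in the proof of Theorem \ref{thm1}; the hypothesis $\vec f \in \prod_i M^{p_i,\varphi_{1i}}(\omega_i)$ together with \eqref{con3} guarantees finiteness. Once this is in place, the expansion and termwise identification are purely algebraic, and the same argument verbatim with $D_1$ replaced by $D_2$ closes the proof.
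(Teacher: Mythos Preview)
The paper does not actually give its own proof of this lemma: it states that the argument is ``similar to Lemma~4.1 in \cite{Gu2}'' and omits it. Your proposal follows the standard route one finds in such references --- reducing to the nested case $2D_1 \subseteq 2D_3$ via an auxiliary ball, splitting each $f_j$ into three pieces $u_j, v_j, w_j$, and then expanding both decompositions multilinearly into the common $3^m$-term sum --- and is correct.

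One point you should make more explicit when writing it up: in passing from the expansion of $T(u_1+v_1,\ldots,u_m+v_m)(x)$ (computed via the bounded $L^{q_1}\times\cdots\times L^{q_m}\to L^q$ extension) to the individual terms with at least one $v_j$, you are implicitly using that the bounded extension agrees with the kernel integral whenever $x\notin\bigcap_j\mathrm{supp}\,g_j$, even for non-smooth $g_j$. This is true but requires a short approximation argument (smooth truncation plus dominated convergence via the kernel size bound), since the kernel representation is a~priori stated only for $C_c^\infty$ inputs. Once that consistency is in hand, your termwise identification goes through exactly as you describe.
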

By Theorem \ref{thm1}, \ref{thm2} and the above arguments, our main results in this topic can be expressed as follows.
\begin{thm}\label{thm13}
	Let $m\in \mathbb{N}$, $T$ be an $m$-linear $\mathscr B$-valued Calder\'on-Zygmund operator, $1\leq p_k<\infty$, $k=1,2,\ldots,m$ with $1/p=\sum_{k=1}^m 1/{p_k}$, $\vec{\omega}=(\omega_1,\ldots,\omega_m)\in {A_{\vec P}} \cap {\left( {{A_\infty }}\right)^m}$ and a group of non-negative measurable functions $({{\vec \varphi }_1},{\varphi _2})$ satisfy the condition $(\ref{con3})$.
	\begin{enumerate}[(i)]
		\item If $\mathop {\min }\limits_{1 \le k \le m} \{ {p_k}\}  > 1$, then T is bounded from ${M^{{p_1},{\varphi _{11}}}}({\omega _1}) \times  \cdots  \times {M^{{p_m},{\varphi _{1m}}}}({\omega _m})$ to ${M^{p,{\varphi _2}}}({v_{\vec \omega }},\mathscr B)$.
		\item If $\mathop {\min }\limits_{1 \le k \le m} \{ {p_k}\}  = 1$, then T is bounded from ${M^{{p_1},{\varphi _{11}}}}({\omega _1}) \times  \cdots  \times {M^{{p_m},{\varphi _{1m}}}}({\omega _m})$ to ${WM^{p,{\varphi _2}}}({v_{\vec \omega }},\mathscr B)$.
	\end{enumerate}
\end{thm}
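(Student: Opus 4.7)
The plan is to deduce Theorem~\ref{thm13} by applying Theorem~\ref{thm1} to the vector-valued Calderón--Zygmund operator $T$; this requires verifying the two structural conditions $T \in LS\bigl(\prod_{i=1}^m M^{p_i,\varphi_{1i}}(\omega_i)\bigr)$ and $T \in LB\bigl(\prod_{i=1}^m M^{p_i,\varphi_{1i}}(\omega_i) \to M^{p,\varphi_2}(v_{\vec\omega}, \mathscr B)\bigr)$. Well-definedness of $T(\vec f)(x)$ through the decomposition~\eqref{BCZ}, and its independence from the choice of containing ball $D\ni x$, follows from Lemma~\ref{Gu7}; convergence of the tail series in $\mathscr B$ comes for free from the same dyadic estimate that will be used to verify LS below.

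To verify LS, fix a ball $B=B(x_0,r)$, a point $x\in B$, and a multi-index $(\alpha_1,\ldots,\alpha_m)\ne 0$ with at least one $\alpha_i=\infty$. The kernel size bound (i) on $Q$ gives $\|Q(x,\vec y)\|_{B(\mathbb C,\mathscr B)} \lesssim (\sum_j |x-y_j|)^{-mn}$, so that $\|T(f_1^{\alpha_1},\ldots,f_m^{\alpha_m})(x)\|_{\mathscr B}$ is dominated by a scalar multilinear integral. Partitioning each factor $f_i^\infty$ into the dyadic shells $2^{j+1}B\setminus 2^jB$ for $j\ge 1$ and using $|x-y_i|\approx 2^j r$ inside these shells, a standard dyadic summation produces the desired bound~\eqref{cen1} with constant uniform in $x_0,r,x$.

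The main technical obstacle lies in verifying LB, since it requires upgrading the weighted $L^p$ bound of Lemma~\ref{xue6}, namely $\|T(f_1^0,\ldots,f_m^0)\|_{L^p(v_{\vec\omega},\mathscr B)} \lesssim \prod_i \|f_i^0\|_{L^{p_i}(\omega_i)}$, to the Morrey-space inequality $\|T(f_1^0,\ldots,f_m^0)\|_{M^{p,\varphi_2}(v_{\vec\omega},\mathscr B)} \lesssim \prod_i \|f_i\|_{M^{p_i,\varphi_{1i}}(\omega_i)}$. The plan is to estimate, for each probe ball $B'=B(z_0,s)$, the quantity $\varphi_2(z_0,s)^{-1} v_{\vec\omega}(B')^{-1/p}\|T(f_1^0,\ldots,f_m^0)\|_{L^p(B',v_{\vec\omega},\mathscr B)}$ by splitting into the regime where $B'$ is comparable with or contained in a dilate of $B$ (where Lemma~\ref{xue6} together with the definition of the Morrey norm absorb the weighted factors) and the regime where $B'$ lies far from $2B$ (where kernel-type tail estimates analogous to the LS step, combined with condition~\eqref{con3}, take over). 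Matching the weighted factors across these two regimes is the delicate point of the verification.

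Once LS and LB are in hand, part (i) of Theorem~\ref{thm13} follows directly from Theorem~\ref{thm1}(i). Part (ii) follows analogously, by replacing the strong-type $L^p$ bound with the weak-type bound in Lemma~\ref{xue6}(ii) so that $T\in LB\bigl(\prod_{i=1}^m M^{p_i,\varphi_{1i}}(\omega_i)\to WM^{p,\varphi_2}(v_{\vec\omega},\mathscr B)\bigr)$, and then invoking Theorem~\ref{thm1}(ii).
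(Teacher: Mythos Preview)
Your verification of the $LS$ condition is essentially identical to the paper's. For the $LB$ condition, however, you are making the task harder than necessary. You read $LB$ as requiring control of the full Morrey norm $\|T(f_1^0,\ldots,f_m^0)\|_{M^{p,\varphi_2}(v_{\vec\omega},\mathscr B)}$ over \emph{all} probe balls $B'=B(z_0,s)$, and therefore propose a two-regime splitting (``$B'$ near $2B$'' versus ``$B'$ far from $2B$''). But if you look at how $LB$ is actually used in the proof of Theorem~\ref{thm1}, the decomposition ball $B$ \emph{is} the ball over which the Morrey supremum is being taken: one only ever needs
\[
\varphi_2(x_0,r)^{-1}\,v_{\vec\omega}(B)^{-1/p}\,\bigl\|\,\|T(f_1\chi_{2B},\ldots,f_m\chi_{2B})\|_{\mathscr B}\,\bigr\|_{L^p(B,v_{\vec\omega}dx)}
\lesssim \prod_{i=1}^m \|f_i\|_{M^{p_i,\varphi_{1i}}(\omega_i)}
\]
uniformly in $B=B(x_0,r)$, i.e.\ the ``diagonal'' case $B'=B$. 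Your near/far case analysis is therefore superfluous.

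The paper's route to this diagonal bound is quick: apply Lemma~\ref{xue6} to get $\|T(f_1^0,\ldots,f_m^0)\|_{L^p(v_{\vec\omega},\mathscr B)}\lesssim\prod_i\|f_i\|_{L^{p_i}(2B,\omega_idx)}$, insert the trivial identity $1\approx |B|^m\int_{2r}^\infty t^{-mn-1}\,dt$, enlarge each $\|f_i\|_{L^{p_i}(2B,\omega_idx)}$ to $\|f_i\|_{L^{p_i}(B(x_0,t),\omega_idx)}$ inside the integral, and then use the $A_{\vec P}$ condition together with Lemma~\ref{cen4} to convert $|B|^m / t^{mn}$ into $v_{\vec\omega}(B)^{1/p}\prod_i\omega_i(B(x_0,t))^{-1/p_i}$. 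This recasts the local term in exactly the same integral form as the tail terms from $LS$, so condition~\eqref{con3} and Lemma~\ref{Gu2} finish the job. Your proposal would eventually arrive at the same place, but via a detour that the structure of Theorem~\ref{thm1} does not demand.
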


\begin{thm}\label{thm14}
	Let $m\in \mathbb{N}$, $T$ be an $m$-linear $\mathscr B$-valued Calder\'on-Zygmund operator, $1< p_k<\infty$, $k=1,2,\ldots,m$ with $1/p=\sum_{k=1}^m 1/{p_k}$, $\vec{\omega}=(\omega_1,\ldots,\omega_m)\in {A_{\vec P}} \cap {\left( {{A_\infty }}\right)^m}$ and a group of non-negative measurable functions $({{\vec \varphi }_1},{\varphi _2})$ satisfy the condition $(\ref{con4})$. Set ${T_{\prod \vec b }}$ be a iterated commutator of $\vec b$ and $T$.
	If $\vec b \in {\left( {BMO} \right)^m}$, then ${T_{\prod \vec b }}$ is bounded from ${M^{{p_1},{\varphi _{11}}}}({\omega _1}) \times  \cdots  \times {M^{{p_m},{\varphi _{1m}}}}({\omega _m})$ to ${M^{p,{\varphi _2}}}({v_{\vec \omega }},\mathscr B)$.
\end{thm}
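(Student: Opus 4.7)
The plan is to deduce Theorem \ref{thm14} directly from Theorem \ref{thm2}, applied with the base operator chosen to be the $m$-linear $\mathscr{B}$-valued Calder\'on--Zygmund operator $T$ and its iterated commutator $T_{\prod \vec b}$. This reduces matters to verifying two ingredients: the local size condition $T \in LS\bigl(\prod_{i=1}^{m} M^{p_i,\varphi_{1i}}(\omega_i)\bigr)$ for the base operator, and the quantitative local boundedness of the commutator, namely
\begin{equation*}
\|T_{\prod \vec b}(f_1^0,\ldots,f_m^0)\|_{M^{p,\varphi_2}(v_{\vec\omega}, \mathscr{B})} \lesssim \prod_{j=1}^{m} \|b_j\|_{BMO} \prod_{i=1}^{m} \|f_i\|_{M^{p_i,\varphi_{1i}}(\omega_i)},
\end{equation*}
uniformly in the ball $B$ defining $f_i^0 = f_i\chi_{2B}$. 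Once both are in hand, Theorem \ref{thm2} yields exactly the Morrey boundedness of $T_{\prod \vec b}$ together with the BMO-quantitative operator norm asserted in Theorem \ref{thm14}.

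The local size condition is verified exactly as in the scalar Calder\'on--Zygmund case, with the $\mathscr{B}$-valued operator-norm size bound on the kernel $Q$ playing the role of the scalar size bound $(\ref{con1})$. For a ball $B = B(x_0,r)$, a tuple $(\alpha_1,\ldots,\alpha_m) \in \{0,\infty\}^m$ with at least one $\alpha_\ell = \infty$, and $z \in B$, the bound (i) in the definition of the $\mathscr{B}$-valued kernel yields
\begin{equation*}
\|T(f_1^{\alpha_1},\ldots,f_m^{\alpha_m})(z)\|_{\mathscr{B}} \lesssim \int_{(\rn)^m} \frac{\prod_{i=1}^{m} |f_i^{\alpha_i}(y_i)|}{(\sum_{j=1}^{m} |z-y_j|)^{mn}}\,d\vec y.
\end{equation*}
Decomposing $(\rn)^m$ into products of dyadic shells $2^{j+1}B \setminus 2^jB$ and using $|z - y_\ell| \gtrsim 2^j r$ on the shell of level $j$ reproduces the right-hand side of $(\ref{cen1})$, uniformly in $z \in B$ and in $(\alpha_1,\ldots,\alpha_m)$.

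The local boundedness of $T_{\prod \vec b}$ is the technical core. Fix a ball $B$ and set $f_i^0 = f_i\chi_{2B}$. Lemma \ref{xue6}(i) applied to the iterated commutator delivers the global weighted $L^p$ estimate
\begin{equation*}
\|T_{\prod \vec b}(f_1^0,\ldots,f_m^0)\|_{L^p(v_{\vec\omega}, \mathscr{B})} \lesssim \prod_{j=1}^{m} \|b_j\|_{BMO} \prod_{i=1}^{m} \|f_i\|_{L^{p_i}(2B, \omega_i dx)}.
\end{equation*}
To upgrade this to the Morrey bound displayed above, I would imitate the $J_1,\ldots,J_4$ decomposition of Theorem \ref{thm2}: for a target ball $D$ meeting $4B$, the global $L^p$-bound combined with $A_\infty$-doubling controls the contribution, while for $D$ disjoint from $4B$ one unfolds the commutator into the $2^m$ standard BMO pieces $\prod_{j \in S}(b_j - \mu_j)\cdot T\bigl(\prod_{j \notin S}(b_j - \mu_j) f_j^0\bigr)$ with $\mu_j = (b_j)_B$, applies the $\mathscr{B}$-valued kernel size bound pointwise, and controls the resulting BMO factors through Lemma \ref{Gu1} and the Hardy-operator inequality of Lemma \ref{Gu3}, with the $(1+\log(t/r))^m$ weight in condition $(\ref{con4})$ absorbing the accumulated logarithmic growth. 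This passage from the global $L^p$-estimate to the Morrey bound, with BMO log-loss tracked against $(\ref{con4})$, is the main obstacle; the Hardy-operator machinery invoked is precisely that of Theorem \ref{thm2}, read with $\|\cdot\|_{\mathscr{B}}$ in place of the scalar modulus throughout.
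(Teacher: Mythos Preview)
Your overall strategy is exactly the paper's: reduce Theorem \ref{thm14} to Theorem \ref{thm2} by checking that the $\mathscr{B}$-norm $G(\vec f):=\|T(\vec f)\|_{\mathscr{B}}$ satisfies the local size condition and that $\|T_{\prod\vec b}(\cdot)\|_{\mathscr{B}}$ satisfies the local boundedness condition, both inputs coming from Lemma \ref{xue6}. The paper states this explicitly after Theorem \ref{thm15}: only Theorem \ref{thm13} is proved in detail, and the commutator case is declared analogous.

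Where you diverge is in the verification of $LB$. You propose to control the full Morrey norm of $T_{\prod\vec b}(f_1^0,\ldots,f_m^0)$ by splitting into ``near'' target balls $D$ (handled by Lemma \ref{xue6} plus doubling) and ``far'' balls $D$ (handled by unfolding the commutator into $2^m$ BMO pieces and tracking log growth). This is unnecessary. In the paper's framework the supremum defining the $LB$ condition runs over the \emph{same} ball $B=B(x_0,r)$ that determines $f_i^0=f_i\chi_{2B}$; one never needs to test $T_{\prod\vec b}(f_1^0,\ldots,f_m^0)$ against an unrelated ball $D$. The paper's route, visible in display (\ref{ieq1}) of the proof of Theorem \ref{thm13}, is simply: apply the commutator bound of Lemma \ref{xue6}(i) globally to obtain $\|T_{\prod\vec b}(f_1^0,\ldots,f_m^0)\|_{L^p(B,v_{\vec\omega}dx)}\lesssim \prod_j\|b_j\|_{BMO}\prod_i\|f_i\|_{L^{p_i}(2B,\omega_idx)}$, then run the chain of inequalities in (\ref{ieq1}) verbatim (the $A_{\vec P}$ and Lemma \ref{cen4} steps are identical), arriving at $v_{\vec\omega}(B)^{1/p}$ times the Hardy-type integral. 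Dividing by $\varphi_2(x_0,r)v_{\vec\omega}(B)^{1/p}$ and applying Lemma \ref{Gu2} with condition (\ref{con4}) (which dominates condition (\ref{con3})) finishes the $LB$ check in two lines, without any near/far dichotomy or additional commutator unfolding.
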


\begin{thm}\label{thm15}
	Let $m\in \mathbb{N}$, $T$ be an $m$-linear $\mathscr B$-valued Calder\'on-Zygmund operator, $1< p_k<\infty$, $k=1,2,\ldots,m$ with $1/p=\sum_{k=1}^m 1/{p_k}$, $\vec{\omega}=(\omega_1,\ldots,\omega_m)\in {A_{\vec P}} \cap {\left( {{A_\infty }}\right)^m}$ and a group of non-negative measurable functions $({{\vec \varphi }_1},{\varphi _2})$ satisfy the condition:
	\begin{equation}\label{con7}
	{\left[ {{{\vec \varphi }_1},{\varphi _2}} \right]_3}: = \mathop {\sup }\limits_{x \in \rn,r > 0} {\varphi _2}{(x,r)^{{\rm{ - }}1}}\int_r^\infty  {{{\left( {1 + \log \frac{t}{r}} \right)}}\frac{{\mathop {{\rm{essinf}}}\limits_{t < \eta  < \infty } \prod\limits_{i = 1}^m {{\varphi _{1i}}(x,\eta ){\omega _i}{{(B(x,\eta ))}^{\frac{1}{{{p_i}}}}}} }}{{\prod\limits_{i = 1}^m {\omega {{(B(x,t))}^{\frac{1}{{{p_i}}}}}} }}\frac{{dt}}{t}}  < \infty.
	\end{equation}
	Set ${T_{\sum {\vec b} }}$ be a multilinear commutator of $\vec b$ and $T$. 
	If $\vec b \in {\left( {BMO} \right)^m}$, then ${T_{\sum {\vec b} }}$ is bounded from ${M^{{p_1},{\varphi _{11}}}}({\omega _1}) \times  \cdots  \times {M^{{p_m},{\varphi _{1m}}}}({\omega _m})$ to ${M^{p,{\varphi _2}}}({v_{\vec \omega }},\mathscr B)$.
\end{thm}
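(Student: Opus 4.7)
My plan is to invoke the $T_{\sum\vec b}$-analogue of Theorem $\ref{thm2}$ promised in the Remark following Theorem $\ref{thm10}$, applied here in the $\mathscr B$-valued setting: that is, to verify the two hypotheses $T\in LS$ and $T_{\sum\vec b}\in LB$ and invoke the analogue with the auxiliary condition $(\ref{con4})$ replaced by $(\ref{con7})$. Note that the logarithmic power in $(\ref{con7})$ is $(1+\log(t/r))^{1}$ rather than $(1+\log(t/r))^{m}$ precisely because each summand of $T_{\sum\vec b}=\sum_{j=1}^m T_{\vec b}^j$ carries only a single BMO-function $b_j$, so only one BMO-absorption via Lemma $\ref{Gu1}$ is needed per summand.

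For the $LS$ condition on $T$, I would fix a ball $B=B(x_0,r)$, $x\in B$, and a multi-index $(\alpha_1,\ldots,\alpha_m)\neq 0$; then at least one $y_i$ lies outside $2B$, so $\sum_\ell|x-y_\ell|\gtrsim 2^{j}r$ on the $j$-th dyadic shell $2^{j+1}B\setminus 2^jB$. Combined with the kernel size bound $(i)$ for the $\mathscr B$-valued kernel $Q$, this yields the dyadic sum $\sum_{j\geq 1}\prod_i|2^{j+1}B|^{-1}\int_{2^{j+1}B}|f_i(y_i)|\,dy_i$ required by $(\ref{cen1})$, the $\mathscr B$-norm being absorbed in the operator norm on the right-hand side of $(i)$.

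For the $LB$ condition on $T_{\sum\vec b}$, I would start from Lemma $\ref{xue6}$(i) in its sum-commutator form, $\|T_{\sum\vec b}(\vec g)\|_{L^p(v_{\vec\omega},\mathscr B)}\lesssim(\max_j\|b_j\|_{BMO})\prod_i\|g_i\|_{L^{p_i}(\omega_i)}$, apply it to $g_i=f_i^0=f_i\chi_{2B}$, and then recast the $L^p$-bound as a Morrey-norm bound via the piecewise-integration trick that produced $(\ref{ieq3.1})$-$(\ref{ieq3.2})$. The rest mimics the proof of Theorem $\ref{thm2}$ verbatim, modulo the single-vs.-multiple BMO-absorption: decompose $f_i=f_i^0+f_i^\infty$, split $T_{\sum\vec b}(\vec f)=\sum_j T_{\vec b}^j(\vec f)$ with $T_{\vec b}^j(\vec f)(z)=(b_j(z)-\mu_j)T(\vec f)(z)-T(f_1,\ldots,(b_j-\mu_j)f_j,\ldots,f_m)(z)$ where $\mu_j=(b_j)_B$, estimate every resulting piece by H\"older's inequality and Lemma $\ref{Gu1}$ (picking up at most one factor $(1+\log(t/r))$ per summand since only one BMO-function is present), and finally apply Lemma $\ref{Gu3}$ with condition $(\ref{con7})$ to close the iteration.

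The main obstacle is the verification of the $LB$ condition: the $L^p$-bound from Lemma $\ref{xue6}$ controls only the global $L^p$-norm of $T_{\sum\vec b}(f_1^0,\ldots,f_m^0)$, whereas $LB$ demands the full Morrey norm of a globally defined, non-compactly-supported output. The remedy is to distinguish the Morrey test ball according to whether it lies "near" $2B$ (where the global $L^p$-bound is essentially tight, after the trick of $(\ref{ieq3.1})$-$(\ref{ieq3.2})$ is used to pass to Morrey norms) or "far" from $2B$ (where the $LS$ estimate from the first step, together with a single BMO-absorption from the outer factor $(b_j(x)-\mu_j)$, yields exactly the single $(1+\log(t/r))$-weight demanded by $(\ref{con7})$).
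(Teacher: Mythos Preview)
Your approach is correct and essentially coincides with the paper's implicit method: the paper does not prove Theorem~\ref{thm15} separately but states that it follows from ``similar methods'' to Theorem~\ref{thm13}, and your outline (verify $LS$ for $T$, verify $LB$ for $T_{\sum\vec b}$ via Lemma~\ref{xue6}, then invoke the $T_{\sum\vec b}$-analogue of Theorem~\ref{thm2} with condition~(\ref{con7}) in place of~(\ref{con4})) is exactly what ``similar methods'' means here. Your observation that each summand $T_{\vec b}^j$ carries a single BMO factor, hence produces only $(1+\log(t/r))^1$, is precisely the modification alluded to in the Remark after Theorem~\ref{thm10}.

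The one point where you diverge from the paper is the ``main obstacle'' paragraph. What you flag as an obstacle is an artifact of reading the $LB$ definition literally as demanding the full Morrey norm of $T_{\sum\vec b}(f_1^0,\ldots,f_m^0)$ over arbitrary test balls $B'$, with the decomposition ball $B$ held fixed. But if you inspect how $LB$ is actually \emph{used} in the proofs of Theorems~\ref{thm1} and~\ref{thm2} (see the treatment of $J_1(x_0,r)$), only the \emph{diagonal} version is ever invoked: the test ball in the Morrey supremum is always the same ball $B$ used to define $f_i^0=f_i\chi_{2B}$. Likewise, in the proof of Theorem~\ref{thm13}, the chain~(\ref{ieq1}) bounds $\|T(f_1^0,\ldots,f_m^0)\|_{L^p(B,v_{\vec\omega}dx)}$ only for this diagonal ball, and that is all that is required. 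The same computation, with Lemma~\ref{xue6} supplying the bound $\|T_{\sum\vec b}(f_1^0,\ldots,f_m^0)\|_{L^p(v_{\vec\omega},\mathscr B)}\lesssim(\max_j\|b_j\|_{BMO})\prod_i\|f_i\|_{L^{p_i}(2B,\omega_i dx)}$, yields the diagonal $LB$ for $T_{\sum\vec b}$ directly. Your near/far remedy is therefore correct but superfluous: no separate analysis of far test balls is needed, and the paper's verification in~(\ref{ieq1}) already sets the template you should follow.
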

For the above three theorems, we only need to prove Theorem \ref{thm13} and the remaining proofs are easily obtained by the similarly methods.
\begin{proof}[Proof of Theorem \ref{thm13}:]
	We just need to prove the case of $1< p_k<\infty$, $k=1,\cdots,m$. For any ${f_i} \in {M^{{p_i},{\varphi _{1i}}}}\left( {{\omega _i}} \right)$, let $f_i=f^0_i+f^{\infty}_i$, where $f^0_i=f_i\chi_{2B}$, $i=1,\ldots,m$ and $\chi_{2B}$ denotes the characteristic function of $2B$.
	By applying Lemma \ref{xue6}, $\vec{\omega} \in {A_{\vec P}}$ and Lemma \ref{cen4}, we get
	\begin{align}\label{ieq1}
	&{\left\| {{{\left\| {T(f_1^0, \ldots ,f_m^0)} \right\|}_\mathscr B}} \right\|_{{L^p}(B,{v_{\vec \omega }}dx)}}\notag\\
	\le& {\left\| {T(f_1^0, \ldots ,f_m^0)} \right\|_{{L^p}({v_{\vec \omega }},\mathscr B)}} \lesssim \prod\limits_{i = 1}^m {{{\left\| {f_i^0} \right\|}_{{L^{{p_i}}}({\omega _i})}}} \approx {\left| B \right|^m}\int_{2r}^\infty  {\frac{{dt}}{{{t^{mn + 1}}}}} \prod\limits_{i = 1}^m {{{\left\| {{f_i}} \right\|}_{{L^{{p_i}}}(2B,{\omega _i}dx)}}}\notag\\
	\le &{\left| B \right|^m}\int_{2r}^\infty  {\prod\limits_{i = 1}^m {{{\left\| {{f_i}} \right\|}_{{L^{{p_i}}}(B\left( {{x_0},t} \right),{\omega _i}dx)}}} \frac{{dt}}{{{t^{mn + 1}}}}}\notag\\
	\lesssim &\int_{2r}^\infty  {\prod\limits_{i = 1}^m {{{\left\| {{f_i}} \right\|}_{{L^{{p_i}}}(B\left( {{x_0},t} \right),{\omega _i}dx)}}} \frac{{dt}}{{{t^{mn + 1}}}}} \prod\limits_{i = 1}^m {{\omega _i}{{\left( B \right)}^{\frac{1}{{{p_i}}}}}{{\left\| {{\omega _i}^{ - \frac{1}{{{p_i}}}}} \right\|}_{{L^{_{{p_i}'}}}(B)}}}\notag\\
	\lesssim &\int_{2r}^\infty  {\prod\limits_{i = 1}^m {\left( {{{\left\| {{f_i}} \right\|}_{{L^{{p_i}}}(B\left( {{x_0},t} \right),{\omega _i}dx)}}{{\left\| {{\omega _i}^{ - \frac{1}{{{p_i}}}}} \right\|}_{{L^{_{{p_i}'}}}(B({x_0},t))}}{{\left| B({x_0},t) \right|}^{-1}}} \right)} \frac{{dt}}{t}} \prod\limits_{i = 1}^m {{\omega _i}{{\left( B \right)}^{\frac{1}{{{p_i}}}}}}\notag\\
	\lesssim &\int_{2r}^\infty  {\prod\limits_{i = 1}^m {\left( {{{\left\| {{f_i}} \right\|}_{{L^{{p_i}}}(B\left( {{x_0},t} \right),{\omega _i}dx)}}{\omega _i}{{\left( {B\left( {{x_0},t} \right)} \right)}^{ - \frac{1}{{{p_i}}}}}} \right)} \frac{{dt}}{t}}   {v_{\vec \omega }}{\left( B \right)^{\frac{1}{p}}}.
	\end{align}
	By using (\ref{con3}) and (\ref{Gu21}) in Lemma \ref{Gu2}, we get 
	\begin{align*}
	{\left\| {T(f_1^0, \ldots ,f_m^0)} \right\|_{{M^{p,{\varphi _2}}}({v_{\vec \omega }},\mathscr B)}} \lesssim \prod\limits_{i = 1}^m {{{\left\| {{f_i}} \right\|}_{{M^{{p_i},{\varphi _{1i}}}}({\omega _i})}}}< \infty.
	\end{align*}
	
	Next, for any $1 \le l \le m$, we assume that ${\alpha _1} =  \cdots  = {\alpha _\ell } = \infty $ and ${\alpha _{l + 1}} =  \cdots  = {\alpha _m} = 0$. For any $x\in B$, we have 
	\begin{align}\label{ieq2}
	&{\left\| {T(f_1^{{\alpha _1}}, \ldots ,f_m^{{\alpha _m}})(x)} \right\|_\mathscr B}\notag\\
	\lesssim&\int_{(\mathbb R^n)^{\ell}\backslash(2B)^{\ell}}\int_{(2B)^{m-\ell}}\frac{|f_1(y_1)\cdots f_m(y_m)|}{(|x-y_1|+\cdots+|x-y_m|)^{mn}}dy_1\cdots dy_m\notag\\
	\lesssim&(\prod_{i=\ell+1}^m\int_{2B}\big|f_i(y_i)\big|\,dy_i)\times\sum_{j=1}^\infty\frac{1}{|2^{j+1}B|^m}\int_{(2^{j+1}B)^\ell\backslash(2^{j}B)^\ell}
	\big|f_1(y_1)\cdots f_{\ell}(y_\ell)\big|\,dy_1\cdots dy_\ell\notag\\
	\le&\sum_{j=1}^\infty\prod_{i=1}^m\frac{1}{|2^{j+1}B|}\int_{2^{j+1}B}\big|f_i(y_i)\big|\,dy_i,
	\end{align}
	where we used the geometric relationships: 
	if $x\in B$, $y \in {2^{j + 1}}B\backslash {2^j}B$, $j \in \n,$ then ${\left| {x - y} \right|^n} \approx \left| {{2^{j + 1}}B} \right|$
	and the sets relations: ${\left( {{{\left( {2B} \right)}^c}} \right)^l} \subseteq {\left( {{{\left( {2B} \right)}^l}} \right)^c}$. 
	
	By the above calculations, in fact, we have already proved
	\begin{equation*}
	G \in LS\left( {\prod\limits_{i = 1}^m {( {M^{{p_i},{\varphi _{1i}}}}({\omega _i}))}} \right)\cap LB\left( {\prod\limits_{i = 1}^m {({M^{{p_i},{\varphi _{1i}}}}({\omega _i}))}  \to {M^{p,{\varphi _2}}}\left( {{v_{\vec \omega }}} \right)} \right),
	\end{equation*}
	where $G(\vec f)(x): = {\left\| {T(\vec f)(x)} \right\|_\mathscr B}$. 
	
	By applying Theorem \ref{thm1}, we have 
	\begin{align*}
{\left\| {T(\vec f)} \right\|_{{M^{p,{\varphi _2}}}({v_{\vec \omega }},\mathscr B)}} = {\left\| {G(\vec f)} \right\|_{{M^{p,{\varphi _2}}}({v_{\vec \omega }})}} \lesssim \prod\limits_{i = 1}^m {{{\left\| {{f_i}} \right\|}_{{M^{p,{\varphi _{1i}}}}({\omega _i})}}}  < \infty.
	\end{align*}
	
The above estimates present that for any $\vec f \in \prod\limits_{i = 1}^m {{M^{{p_1},{\varphi _{1i}}}}({\omega _i})}$, we have ${\left\| {T(\vec f)(x)} \right\|_\mathscr B} < \infty,$ a.e. $x \in \rn$. Combining Lemma $\ref{Gu7}$, we can see that $T$ is well-defined on $\prod\limits_{i = 1}^m {{M^{{p_i},{\varphi _{1i}}}}({\omega _i})}$, which is also bounded from ${M^{{p_1},{\varphi _{11}}}}({\omega _1}) \times  \cdots  \times {M^{{p_m},{\varphi _{1m}}}}({\omega _m})$ to ${M^{p,{\varphi _2}}}({v_{\vec \omega }},\mathscr B)$. This completes the proof.
\end{proof}

\begin{center}
	\textbf{II: Multilinear Littlewood-Paley square operators}
\end{center}

\begin{defn}[\cite{xue1}]
	Let $K$ be a function defined on $\mathbb{R}^n\times \mathbb{R}^{mn}$ with $supp K\subseteq
	\mathcal{B}:=\{(x,y_1,\dots,y_m):\sum_{j=1}^m|x-y_j|^2\leq 1\}$. $K$ is called a multilinear Marcinkiewicz kernel if for some $0<\delta<mn$ and some positive constants $A$, $\gamma_0$, and $B_1$,
	\begin{enumerate}
		\item[\emph{(a)}]$
		|K(x,\vec{y})|\leq\frac{A}
		{(\sum_{j=1}^{m}|x-y_{j}|)^{mn-\delta}};
		$	\item[\emph{(b)}]$
		|K(x,\vec{y})-K(x,y_{1},\dots,y_i',\dots,y_{m})|
		\leq\frac{A|y_i-y_i'|^{\gamma_0}}
		{(\sum_{j=1}^{m}|x-y_{j}|)^{mn-\delta+\gamma_0}};$
		\item[\emph{(c)}]$
		|K(x,\vec{y})-K(x',y_{1},\dots,y_{m})|
		\leq\frac{A|x-x'|^{\gamma_0}}
		{(\sum_{j=1}^{m}|x-y_{j}|)^{mn-\delta+\gamma_0}},
		$\end{enumerate}
	where (b) holds whenever $(x,y_1,\dots,y_m)\in \mathcal{B}$ and
	$|y_i-y_i'|\leq\frac{1}{B_1}|x-y_i|$ for all $0\leq i\leq m$,
	and (c) holds whenever $(x,y_1,\dots,y_m)\in \mathcal{B}$ and
	$|x-x'|\leq\frac{1}{B_1}\max_{1\leq j \leq m}|x-y_{j}|$.
\end{defn}

\begin{defn}[\cite{xue1}]
	Let $K(x,y_1,\dots,y_m)$ be a locally integrable function defined away from the diagonal
	$x=y_1=\dots=y_m$ in $(\mathbb{R}^n)^{m+1}$.
	$K$ is called a multilinear Littlewood-Paley kernel if for some positive constants $A$, $\gamma_0$, $\delta$, and $B_1$, it holds that 
	\begin{enumerate}
		\item[\emph{(d)}]$
		|K(x,\vec{y})|\leq\frac{A}
		{(1+\sum_{j=1}^{m}|x-y_{j}|)^{mn+\delta}};
		$\item[\emph{(e)}]$
		|K(x,\vec{y})-K(x,y_{1},\dots,y_i',\dots,y_{m})|
		\leq\frac{A|y_i-y_i'|^{\gamma_0}}
		{(1+\sum_{j=1}^{m}|x-y_{j}|)^{mn+\delta+\gamma_0}}
		;$
		\item[\emph{(f)}]$
		|K(x,\vec{y})-K(x',y_{1},\dots,y_{m})|
		\leq\frac{A|x-x'|^{\gamma_0}}
		{(1+\sum_{j=1}^{m}|x-y_{j}|)^{mn+\delta+\gamma_0}},
		$\end{enumerate}
	where (e) holds whenever
	$|y_i-y_i'|\leq\frac{1}{B_1}|x-y_i|$ and for all $1\leq i\leq m$,
	and (f) holds whenever
	$|x-x'|\leq\frac{1}{B_1}\max\limits_{1\leq j \leq m}|x-y_{j}|$.
\end{defn}
\begin{defn}[\cite{xue2}]
	For any $t \in (0,\infty ),$ let ${K}(x,{y_1}, \cdots ,{y_m})$ be a locally integrable function defined away from the diagonal $x = {y_1} =  \cdots  = {y_m}$ in $(\mathbb R^n)^{m+1}$ and denote $(x,\vec y) = (x,{y_1}, \cdots ,{y_m})$, ${K_t}(x,\vec y) = \frac{1}{{{t^{mn}}}}{K}(\frac{x}{t},\frac{{{y_1}}}{t}, \ldots ,\frac{{{y_m}}}{t})$ (we will always use this notation throughout this paper). We say $K$ is a kernel of type $\theta$ if for some constants $0<\tau<1$, there exists a constant $A>0,$ such that 
	\begin{enumerate}
		\item[\emph{(g)}]$
		{(\int_0^\infty  {{{\left| {{K_t}(x,\vec y)} \right|}^2}\frac{{dt}}{t}} )^{\frac{1}{2}}} \le \frac{A}{{{{(\sum\limits_{j = 1}^m {\left| {x - {y_j}} \right|} )}^{mn}}}};$
		\item[\emph{(h)}]$
		{(\int_0^\infty  {{{\left| {{K_t}(x,\vec y) - {K_t}(x,{y_1}, \cdots ,{{y_i}'}, \cdots ,{y_m})} \right|}^2}} \frac{{dt}}{t})^{\frac{1}{2}}} \le \frac{A}{{{{(\sum\limits_{j = 1}^m {\left| {x - {y_j}} \right|} )}^{mn}}}} \cdot \theta (\frac{{{{\left| {{y_i} - {{y'}_i}} \right|} }}}{{\sum\limits_{j = 1}^m {\left| {x - {y_j}} \right|} }});$
		\item[\emph{(i)}]$
		{(\int_0^\infty  {{{\left| {{K_t}(z,\vec y) - {K_t}(x,\vec y)} \right|}^2}} \frac{{dt}}{t})^{\frac{1}{2}}} \le \frac{A}{{{{(\sum\limits_{j = 1}^m {\left| {x - {y_j}} \right|} )}^{mn}}}} \cdot \theta (\frac{{\left| {z - x} \right|}}{{\sum\limits_{j = 1}^m {\left| {x - {y_j}} \right|} }})$,
	\end{enumerate}
	where (h) holds for any $i \in \{ 1, \cdots ,m\}$, whenever $\left| {{y_i} - {y_i}^\prime } \right| \le  \frac{1}{2}\mathop {\max }\limits_{1 \le j \le m} \{ \left| {x - {y_j}} \right|\}$
	and (i) holds whenever $\left| {z - x} \right| \le \frac{1}{2} \mathop {\max }\limits_{1 \le j \le m} \{ \left| {x - {y_j}} \right|\}$.
\end{defn}
When $\theta \left( t \right) = {t^\gamma }$ for some $\gamma  > 0$, we say $K$ is a kernel of C-Z type $I$.

The multilinear square operator $T$ with kernel $K$ is defined by
\begin{equation*}\
T(\vec f)(x) = {({\int_0^\infty  {\left| {\int_{\nm} {{K_t}(x,\vec y)\prod\limits_{j = 1}^m {{f_j}({y_j})d{y_1} \cdots d{y_m}} } } \right|} ^2}\frac{{dt}}{t})^{\frac{1}{2}}},
\end{equation*}
for any $\vec f \in {C_c^\infty}({\rn}) \times  \cdots  \times {C_c^\infty}({\rn})$ and any $x \notin \bigcap\limits_{j = 1}^m {{\rm{supp}}{f_j}}$. Assume that $T$ can be extended to be a bounded operator from ${L^{{q_1}}} \times  \cdots {L^{{q_m}}}$ to ${L^q}$, for some $1 < {q_1} \cdots , {q_m} < \infty, \frac{1}{q} = \sum\limits_{k = 1}^m {\frac{1}{{{q_k}}}}$.

$T$ is called a multilinear Littlewood-Paley square operator with Dini kernel when $K$ is a kernel of type $\theta \in Dini(1)$.

$T$ is called a multilinear Marcinkiewicz integral when $K$ is a  multilinear Marcinkiewicz kernel.

$T$ is called a multilinear Littlewood-Paley $g$-function when $K$ is a multilinear Littlewood-Paley kernel.

The multilinear Littlewood-Paley $g_\lambda ^*$-function is defined by
\begin{align*}
T_{\lambda}(\vec{f})(x)=\bigg(\iint_{\mathbb{R}^{n+1}_+}\big(\frac{t}{|x-z|+t}\big)^{n\lambda}
|\int_{\mathbb{R}^{nm}}K_t(z,\vec{y})
\prod_{j=1}^mf_j(y_j)d\vec{y}|^2\frac{dzdt}{t^{n+1}}\bigg)^{\frac12},
\end{align*}
for any $\vec f \in {C_c^\infty}({\rn}) \times  \cdots  \times {C_c^\infty}({\rn})$ and any $x \notin \bigcap\limits_{j = 1}^m {{\rm{supp}}{f_j}}$, where $K$ is a multilinear Littlewood-Paley kernel. Assume that $T$ can be extended to be a bounded operator from ${L^{{q_1}}} \times  \cdots {L^{{q_m}}}$ to ${L^q}$, for some $1 < {q_1} \cdots , {q_m} < \infty, \frac{1}{q} = \sum\limits_{k = 1}^m {\frac{1}{{{q_k}}}}$.

The following Lemmas are crucial for understanding.
\begin{lem}[\cite{xue1}]
	If $K$ is either a multilinear Littlewood-Paley kernel or multilinear Marcinkiewicz kernel, then $K$ is a C-Z type $I$ kernel, furthermore, it is a Dini kernel.
\end{lem}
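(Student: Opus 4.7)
The plan is to verify conditions (g), (h), (i) with the polynomial modulus $\theta(t)=t^{\gamma_0}$ for both kernel classes; since $\int_0^1 t^{\gamma_0-1}\,dt<\infty$, the C-Z type $I$ property will then automatically entail the Dini property. The key tool is the scaling identity $K_t(x,\vec y)=t^{-mn}K(x/t,\vec y/t)$ combined with the pointwise bounds satisfied by $K$; in both settings the $L^2(dt/t)$ norms reduce, after the substitution $u=t/R$ with $R:=\sum_{j=1}^m|x-y_j|$, to convergent Beta-type integrals whose values are independent of $R$.

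For the Littlewood-Paley case, substituting $K_t$ into (d) yields
\begin{equation*}
|K_t(x,\vec y)|\lesssim\frac{t^\delta}{(t+R)^{mn+\delta}},
\end{equation*}
so that $\int_0^\infty|K_t(x,\vec y)|^2\,dt/t\lesssim R^{-2mn}$, proving (g). Applying the same scaling to (e) introduces an additional factor $(|y_i-y_i'|/R)^{\gamma_0}$, yielding the $L^2(dt/t)$-bound $\lesssim R^{-mn}(|y_i-y_i'|/R)^{\gamma_0}$ for the $y_i$-increment of $K_t$; this is exactly (h) with $\theta(t)=t^{\gamma_0}$. Condition (i) follows from (f) by the same computation.

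For the Marcinkiewicz case, the support condition $\operatorname{supp}K\subseteq\mathcal{B}$ forces $K_t(x,\vec y)=0$ whenever $R>\sqrt{m}\,t$, so the $dt/t$ integration is effectively restricted to $t\gtrsim R$. Applying (a) to $K_t$ gives $|K_t(x,\vec y)|\lesssim t^{-\delta}R^{-(mn-\delta)}$ on this range, and then
\begin{equation*}
R^{-2(mn-\delta)}\int_{R/\sqrt m}^\infty t^{-2\delta-1}\,dt\lesssim R^{-2mn}
\end{equation*}
yields (g). The smoothness conditions (h), (i) follow analogously from (b), (c), each producing an extra factor $(|y_i-y_i'|/R)^{\gamma_0}$ (respectively $(|z-x|/R)^{\gamma_0}$), so the modulus is again $\theta(t)=t^{\gamma_0}$.

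The main obstacle I anticipate is aligning the hypotheses: the pointwise smoothness estimates (b), (c), (e), (f) hold only under $|y_i-y_i'|\le|x-y_i|/B_1$ or $|x-x'|\le\max_j|x-y_j|/B_1$, whereas (h), (i) are required under the weaker condition $|y_i-y_i'|\le\tfrac12\max_j|x-y_j|$. This gap will be bridged by interpolating with the trivial size bound from (a)/(d): one splits the $dt/t$-integral at a threshold in $t$ depending on the ratio $|y_i-y_i'|/R$, using the smoothness estimate where it applies and the size estimate elsewhere. Once this splitting is in place, the remaining $t$-integrals are standard Beta integrals that evaluate, after scaling, to the expected $R^{-mn}(|y_i-y_i'|/R)^{\gamma_0}$-type bound, completing the verification that $K$ is a C-Z type $I$ (hence Dini) kernel.
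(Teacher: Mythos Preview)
The paper does not prove this lemma; it is quoted from \cite{xue1} with no argument supplied, so there is no in-paper proof to compare yours against. Your overall scheme---insert the scaling $K_t(x,\vec y)=t^{-mn}K(x/t,\vec y/t)$ into (a)--(f) and reduce the resulting $L^2(dt/t)$ norms to Beta-type integrals via $u=t/R$---is the natural route, and your verification of (g) is correct in both the Littlewood--Paley and Marcinkiewicz settings.

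There is, however, a genuine gap in your handling of (h) and (i). Your proposed bridge for the hypothesis mismatch (``split the $dt/t$-integral at a threshold in $t$'') cannot work as stated: after scaling, the condition under which (e) or (b) applies to $K_t$ becomes $|y_i/t-y_i'/t|\le|x/t-y_i/t|/B_1$, that is, $|y_i-y_i'|\le|x-y_i|/B_1$, which is \emph{independent of $t$}. Thus for any fixed spatial configuration $(x,\vec y,y_i')$ in the bad regime (say $|x-y_i|$ tiny while $\max_j|x-y_j|$ is large), the smoothness hypothesis fails for \emph{every} $t$ simultaneously; no $t$-threshold can put you back in the range where (e)/(b) applies. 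Using the size bound (d)/(a) alone on both terms and splitting in $t$ yields at best $R^{-mn}$ times a factor that cannot be made $\lesssim(|y_i-y_i'|/R)^{\gamma_0}$ for both halves of the split at once. The mismatch is spatial, not in $t$, and must be resolved by a spatial device (for instance, exploiting that in the bad regime both $|x-y_i|$ and $|x-y_i'|$ are $\lesssim|y_i-y_i'|$, or telescoping through intermediate points where the smoothness hypothesis holds). That is the piece of the argument your sketch does not supply.
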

\begin{lem}[\cite{xue2}]\label{xue1}
	Let $m\in \n$ and $T$ be an $m$-linear square operator with Dini kernel. If $p_1,\ldots,p_m\in[1,\infty)$ with $1/p=\sum_{k=1}^m 1/{p_k}$, and $\vec{\omega}=(\omega_1,\ldots,\omega_m)\in A_{\vec{P}}$, the following results hold:
	\begin{enumerate}[(i)]
		\item If $\mathop {\min }\limits_{1 \le i \le m} \{ {p_i}\}  > 1,$ then there exists a constant ${C}$, independent of $\vec f$, such that 
		\begin{equation*}
		{\left\| {T(\vec f)} \right\|_{{L^{p}}({v_{\vec \omega }})}} \le C{\prod\limits_{i = 1}^m {\left\| {{f_i}} \right\|} _{{L^{{p_i}}}({\omega _i})}};
		\end{equation*}
		\item If $\mathop {\min }\limits_{1 \le i \le m} \{ {p_i}\}  = 1,$ then there exists a constant ${C}$, independent of $\vec f$, such that 
		\begin{equation*}
		{\left\| {T(\vec f)} \right\|_{W{L^{p}}({v_{\vec \omega }})}} \le C{\prod\limits_{i = 1}^m {\left\| {{f_i}} \right\|} _{{L^{{p_i}}}({\omega _i})}}.
		\end{equation*}
	\end{enumerate}
\end{lem}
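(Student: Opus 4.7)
The plan is to deduce Lemma~\ref{xue1} from the weighted boundedness of the multilinear Hardy--Littlewood maximal operator
\[
\mathcal{M}(\vec f)(x) := \sup_{B \ni x} \prod_{i=1}^m \frac{1}{|B|}\int_B |f_i(y_i)|\,dy_i.
\]
By Lemma~\ref{lem4}, the hypothesis $\vec\omega \in A_{\vec P}$ forces $v_{\vec\omega} \in A_{mp} \subseteq A_\infty$ and $\omega_i^{1-p_i'} \in A_{mp_i'}$, and the theorem of Lerner--Ombrosi--P\'erez--Torres--Trujillo-Gonz\'alez \cite{Lerner} then yields that $\mathcal M$ is bounded from $\prod_i L^{p_i}(\omega_i)$ to $L^p(v_{\vec\omega})$, and to the weak space $WL^p(v_{\vec\omega})$ whenever $\min_i p_i = 1$. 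Thus the task reduces to dominating $T$ by $\mathcal M$ pointwise in a suitable sense.

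For part~(i), where $\min_i p_i > 1$, I would establish the pointwise sharp maximal estimate
\[
M^{\#}_\delta\bigl(T(\vec f)\bigr)(x) \lesssim \mathcal{M}(\vec f)(x), \qquad 0 < \delta < 1/m,
\]
where $M^{\#}_\delta g := M^{\#}(|g|^\delta)^{1/\delta}$; combining this with the Fefferman--Stein inequality (available since $v_{\vec\omega}\in A_\infty$) and the weighted bound on $\mathcal M$ finishes the argument. To prove the sharp maximal estimate, fix a ball $B \ni x$, split $f_i = f_i^0 + f_i^\infty$ with $f_i^0 := f_i\chi_{2B}$, and expand $T(\vec f)$ into $2^m$ terms. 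The fully local term $T(f_1^0,\dots,f_m^0)$ is handled by Kolmogorov's inequality combined with the assumed $L^{q_1}\times\cdots\times L^{q_m}\to L^q$ boundedness with $q\delta<1$. Each remaining piece contains at least one $f_j^\infty$: Minkowski's inequality in $L^2(dt/t)$ reduces matters to estimating kernel differences via the Dini smoothness conditions $(h)$ and $(i)$, and after summing over dyadic annuli $2^{j+1}B \setminus 2^j B$ one obtains a bound of the form $\sum_{j \ge 1}\theta(2^{-j})\,\mathcal M(\vec f)(x)$, which converges precisely because $\theta \in Dini(1)$.

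For part~(ii), when some $p_i = 1$, I would perform a multilinear Calder\'on--Zygmund decomposition on each $f_i$ at height $\alpha$, writing $f_i = g_i + \sum_k b_{i,k}$ with $\|g_i\|_\infty \lesssim \alpha$, $\operatorname{supp} b_{i,k}$ contained in disjoint cubes $Q_{i,k}$, and $\int b_{i,k} = 0$. Expanding $T(\vec f)$ into $2^m$ multilinear pieces and estimating the purely good term by part~(i) applied with $p_i = 2$ reduces matters to the mixed terms. For any mixed term containing at least one bad component $b_{i,k}$, I would shrink its support to $2Q_{i,k}$, exploit the vanishing mean together with the Dini smoothness of $K_t$, and sum over cubes; the resulting series in the scales $r_{Q_{i,k}}$ converges by the $Dini(1)$ hypothesis. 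Standard level-set bookkeeping then delivers the weak-type $(1,\dots,1)$ bound, after which interpolation/extrapolation with part~(i) handles the remaining configurations of mixed indices.

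The main obstacle is the Dini sharp maximal estimate in part~(i). Unlike the H\"older/Lipschitz kernel case, where the geometric factor $2^{-j\gamma}$ sums trivially, here one can only afford $\sum_{j\ge 1}\theta(2^{-j})$, whose convergence is exactly the content of $\int_0^1 \theta(t)/t\,dt < \infty$. An additional subtlety beyond the ordinary multilinear Calder\'on--Zygmund setting is the extra integration in $t$ intrinsic to a square function: Minkowski's inequality in $L^2(dt/t)$ must be applied with care so that the kernel-difference estimates remain uniform in $t$ while still being summable in the dyadic index $j$ across all $2^m - 1$ non-local pieces simultaneously.
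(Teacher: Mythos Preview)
The paper does not supply a proof of this lemma: it is quoted verbatim from \cite{xue2} (Si--Xue) and used as a black box in the applications section. There is therefore no ``paper's own proof'' to compare against.

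That said, your outline is precisely the standard route taken in \cite{xue2} and its predecessors: a sharp maximal estimate $M^{\#}_\delta(T(\vec f)) \lesssim \mathcal{M}(\vec f)$ (via the local/non-local splitting, Kolmogorov on the local piece, and Dini smoothness summed over dyadic annuli on the rest), followed by Fefferman--Stein and the Lerner--Ombrosi--P\'erez--Torres--Trujillo-Gonz\'alez bounds for $\mathcal{M}$. Your identification of the key technical points---Minkowski in $L^2(dt/t)$ to handle the square-function structure, and the fact that $\sum_j \theta(2^{-j}) < \infty$ is exactly the $Dini(1)$ hypothesis---is accurate. The one place where your sketch is thin is part~(ii): in the multiple-weight setting the endpoint argument in \cite{xue2} does not proceed by a direct Calder\'on--Zygmund decomposition on each $f_i$ separately, but rather through the sharp maximal control combined with the weak-type bound for $\mathcal{M}$ already established in \cite{Lerner}; the decomposition you describe works in the unweighted case but does not immediately give the $A_{\vec P}$-weighted weak bound without additional extrapolation machinery.
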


\begin{lem}[\cite{xue5}]\label{xue2}
	Let $m\in \n$ and $T_\lambda$ be an $m$-linear Littlewood-Paley $g_\lambda ^*$-function. If $p_1,\ldots,p_m\in[1,\infty)$ with $1/p=\sum_{k=1}^m 1/{p_k}$, and $\vec{\omega}=(\omega_1,\ldots,\omega_m)\in A_{\vec{P}}$, then for any $\lambda  > 2m$, the following results hold:
	\begin{enumerate}[(i)]
		\item If $\mathop {\min }\limits_{1 \le i \le m} \{ {p_i}\}  > 1,$ then there exists a constant ${C}$, independent of $\vec f$, such that 
		\begin{equation*}\label{LEQ2}
		{\left\| {{T_\lambda }(\vec f)} \right\|_{{L^{p}}({v_{\vec \omega }})}} \le C{\prod\limits_{i = 1}^m {\left\| {{f_i}} \right\|} _{{L^{{p_i}}}({\omega _i})}};
		\end{equation*}
		\item If $\mathop {\min }\limits_{1 \le i \le m} \{ {p_i}\}  = 1,$ then there exists a constant ${C}$, independent of $\vec f$, such that 
		\begin{equation*}\label{LEQ3}
		{\left\| {{T_\lambda }(\vec f)} \right\|_{W{L^{p}}({v_{\vec \omega }})}} \le C{\prod\limits_{i = 1}^m {\left\| {{f_i}} \right\|} _{{L^{{p_i}}}({\omega _i})}}.
		\end{equation*}
	\end{enumerate}
\end{lem}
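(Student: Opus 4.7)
My plan is to dominate $T_\lambda$ pointwise by a geometrically decaying series of multilinear area-type square functions, and then to apply Lemma \ref{xue1} termwise. Setting $F_t(z) := \int_{\mathbb{R}^{nm}} K_t(z,\vec y)\prod_{j=1}^m f_j(y_j)\,d\vec y$, I would split the upper half-space $\mathbb{R}^{n+1}_+$ into the Luzin cone $E_0 = \{(z,t):|x-z|\le t\}$ and the annular cones $E_k = \{(z,t):2^{k-1}t<|x-z|\le 2^k t\}$ for $k\ge 1$. On $E_k$ the Littlewood-Paley weight satisfies $\bigl(t/(|x-z|+t)\bigr)^{n\lambda}\lesssim 2^{-kn\lambda}$; enlarging each annulus to the full cone $\{|x-z|\le 2^k t\}$ costs only an absolute constant, so that the resulting pointwise bound reads
\begin{equation*}
T_\lambda(\vec f)(x)\lesssim \sum_{k\ge 0}2^{-kn\lambda/2}\,S_{2^k}(\vec f)(x),
\end{equation*}
where $S_{2^k}$ denotes the multilinear area square function with aperture $2^k$ built from the same kernel $K$.

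The next step is to bound $\|S_{2^k}(\vec f)\|_{L^p(v_{\vec\omega})}$ by a constant that is polynomial in $2^k$. The standard aperture-change argument tiles the wider cone by $O(2^{kn})$ translates of the aperture-$1$ cone and dominates the resulting averages by the multilinear Hardy-Littlewood maximal operator applied to $S_1(\vec f)$. Since the aperture-$1$ area function is $L^p(v_{\vec\omega})$-equivalent to the $g$-type operator $T$ of Lemma \ref{xue1} (the $g$-function is itself a multilinear square operator with Dini kernel), and since the multilinear maximal operator is bounded from $L^{p_1}(\omega_1)\times\cdots\times L^{p_m}(\omega_m)$ to $L^p(v_{\vec\omega})$ whenever $\vec\omega\in A_{\vec P}$, these two ingredients give $\|S_{2^k}(\vec f)\|_{L^p(v_{\vec\omega})}\lesssim 2^{knm}\prod_{i=1}^m\|f_i\|_{L^{p_i}(\omega_i)}$ in case (i). Substituting into the pointwise bound, the geometric series converges precisely when $-n\lambda/2+nm<0$, i.e.\ $\lambda>2m$, which delivers (i). For (ii), the same scheme works if each $S_{2^k}$ is estimated in the weak quasi-norm $WL^p(v_{\vec\omega})$ through Lemma \ref{xue1}(ii), and the quasi-triangle inequality in $WL^p$ is used to sum the geometric series with only bounded loss.

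The main obstacle is the polynomial aperture-change estimate on $L^p(v_{\vec\omega})$: classical linear change-of-aperture results do not apply directly because $v_{\vec\omega}=\prod_i\omega_i^{p/p_i}$ lies in $A_{mp}$ rather than in any $A_p$ attached to the individual inputs (see Lemma \ref{lem4}). The remedy is to route the aperture change through the multilinear maximal operator against the full tuple $\vec\omega\in A_{\vec P}$, rather than through a single-weight Fefferman-Stein inequality. Keeping the growth in $2^k$ strictly polynomial (bounded by $2^{knm}$) is the technical crux, and it is precisely this exponent that pins the threshold $\lambda>2m$ as both necessary for the geometric series to converge and sufficient for the desired conclusion.
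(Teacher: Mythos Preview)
The paper does not prove this lemma; it is quoted verbatim from Shi--Xue--Yabuta \cite{xue5} as a known result, so there is no proof in the paper to compare against. That said, here is an assessment of your sketch.

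Your dyadic-cone decomposition $T_\lambda(\vec f)(x)\lesssim \sum_{k\ge 0}2^{-kn\lambda/2}S_{2^k}(\vec f)(x)$ is correct and is indeed the standard opening move. The gap is in the next step. You assert $\|S_{2^k}(\vec f)\|_{L^p(v_{\vec\omega})}\lesssim 2^{knm}\prod_i\|f_i\|_{L^{p_i}(\omega_i)}$, but the justification you give does not produce that exponent: tiling the aperture-$2^k$ cone by $O(2^{kn})$ translates of the unit cone yields a factor $2^{kn}$, not $2^{knm}$, and with $2^{kn}$ the series would already converge for $\lambda>2$, which is too strong. The phrase ``the multilinear Hardy--Littlewood maximal operator applied to $S_1(\vec f)$'' is also not meaningful, since $S_1(\vec f)$ is a single scalar function; only the \emph{linear} maximal operator acts on it, and for that you would need $v_{\vec\omega}\in A_p$, whereas Lemma~\ref{lem4} only gives $v_{\vec\omega}\in A_{mp}$. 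Finally, the claimed $L^p(v_{\vec\omega})$-equivalence between the aperture-$1$ area function $S_1$ and the $g$-type operator of Lemma~\ref{xue1} is not available off the shelf in the multilinear weighted setting and cannot simply be invoked.

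The route in \cite{xue5} avoids generic aperture change entirely: after the cone decomposition, each piece is controlled \emph{directly} from the decay of the multilinear Littlewood--Paley kernel $|K(x,\vec y)|\lesssim(1+\sum_j|x-y_j|)^{-mn-\delta}$ (and its smoothness analogue), and it is precisely the exponent $mn$ in that decay that forces the threshold $\lambda>2m$. Replacing your aperture-change paragraph with this direct kernel estimate on $S_{2^k}$ would close the gap.
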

In combination with the above Lemmas, we only need to consider the multilinear square operator $T$ with Dini kernel and the multilinear Littlewood-Paley $g_\lambda ^*$-function $T_\lambda$. 
It is well known that the multilinear Littlewood-Paley square operators are important applications of the multilinear $\mathscr B$-valued Calder\'on-Zygmund operator.
By Theorem \ref{thm13}, \ref{thm14}, \ref{thm15} and the above arguments, we can formulate the main results of this topic as follows.
\begin{thm}\label{thm4}
	Let $m\in \mathbb{N}$, $T$ be an $m$-linear Littlewood-Paley square operator with Dini kernel $(T_\lambda$ be an $m$-linear Littlewood-Paley $g_\lambda ^*$-function, for any $\lambda  > 2m)$, $1\leq p_k<\infty$, $k=1,2,\ldots,m$ with $1/p=\sum_{k=1}^m 1/{p_k}$, $\vec{\omega}=(\omega_1,\ldots,\omega_m)\in {A_{\vec P}} \cap {\left( {{A_\infty }}\right)^m}$ and a group of non-negative measurable functions $({{\vec \varphi }_1},{\varphi _2})$ satisfy the condition $(\ref{con3})$.
	\begin{enumerate}[(i)]
		\item If $\mathop {\min }\limits_{1 \le k \le m} \{ {p_k}\}  > 1$, then T $\left(T_\lambda \right)$ is bounded from $\prod\limits_{i = 1}^m {M^{{p_i},{\varphi _{1i}}}({\omega _i})}$ to ${M^{p,{\varphi _2}}}({v_{\vec \omega }});$
		\item If $\mathop {\min }\limits_{1 \le k \le m} \{ {p_k}\}  = 1$, then T $\left(T_\lambda \right)$ is bounded from $\prod\limits_{i = 1}^m {M^{{p_i},{\varphi _{1i}}}({\omega _i})}$ to ${WM^{p,{\varphi _2}}}({v_{\vec \omega }}).$
	\end{enumerate}
\end{thm}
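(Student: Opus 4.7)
The plan is to realize both operators as multilinear $\mathscr{B}$-valued Calderón–Zygmund operators for an appropriate Banach space $\mathscr{B}$ and then invoke Theorem \ref{thm13} directly. For the multilinear Littlewood–Paley square operator $T$ with Dini kernel I take $\mathscr{B}=L^2((0,\infty),dt/t)$; then the operator-valued kernel $Q(x,\vec y)=(K_t(x,\vec y))_{t>0}$ makes $T(\vec f)(x)$ literally the $\mathscr{B}$-norm of the vector-valued integral. For the $g_\lambda^\ast$-function $T_\lambda$ I take $\mathscr{B}=L^2(\mathbb R^{n+1}_+,\bigl(\tfrac{t}{|x-z|+t}\bigr)^{n\lambda}\tfrac{dz\,dt}{t^{n+1}})$ so that $\|T_\lambda(\vec f)(x)\|_{\mathscr{B}}$ reproduces the definition pointwise. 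With these identifications, Theorem \ref{thm4} reduces to checking the two inputs required by Theorem \ref{thm13}: (a) weighted multilinear $L^p$ boundedness, and (b) a $\mathscr{B}$-valued size estimate on the kernel.

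For (a), I would quote Lemma \ref{xue1} for the Dini-kernel square operator and Lemma \ref{xue2} (using the hypothesis $\lambda>2m$) for $T_\lambda$; both give exactly the hypothesis of Lemma \ref{xue6} with the chosen $\mathscr{B}$. For (b), in the Dini case the size estimate (g) is already a $\mathscr{B}$-norm inequality and gives directly
\[
\|Q(x,\vec y)\|_{B(\mathbb C,\mathscr{B})}=\Bigl(\int_0^\infty|K_t(x,\vec y)|^2\tfrac{dt}{t}\Bigr)^{1/2}\lesssim\frac{1}{(\sum_j|x-y_j|)^{mn}}.
\]
For $T_\lambda$, I would obtain the analogous bound by splitting the $(z,t)$-integral into the regions $|x-z|\le t$ and $|x-z|>t$, using condition (d) and integrating $(\tfrac{t}{|x-z|+t})^{n\lambda}$ against $t^{-n-1}\,dz\,dt$; the convergence of this double integral is precisely where $\lambda>2m$ is needed.

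With (a) and (b) in hand, the proof proceeds exactly as in Theorem \ref{thm13}: define $G(\vec f)(x):=\|T(\vec f)(x)\|_{\mathscr{B}}$ (respectively with $T_\lambda$), decompose $f_i=f_i^0+f_i^\infty$ with $f_i^0=f_i\chi_{2B}$, and use (a) plus inequality (\ref{ieq1}) to verify the local boundedness condition $G\in LB(\prod M^{p_i,\varphi_{1i}}(\omega_i)\to M^{p,\varphi_2}(v_{\vec\omega}))$, and use (b) via Minkowski's inequality together with the annular decomposition leading to (\ref{ieq2}) to verify the local size condition $G\in LS(\prod M^{p_i,\varphi_{1i}}(\omega_i))$. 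Then Theorem \ref{thm1} applied to $G$ finishes both the strong-type case ($\min p_k>1$) and the weak-type case ($\min p_k=1$), and Lemma \ref{Gu7} guarantees that the definitions are independent of the chosen ball, so $T(\vec f)$ (resp.\ $T_\lambda(\vec f)$) is well defined on the product Morrey space.

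The main obstacle I anticipate is the $\mathscr{B}$-valued size estimate for $T_\lambda$: the bookkeeping of the weight $(\tfrac{t}{|x-z|+t})^{n\lambda}$ against the singular factor $(1+\sum|x-y_j|)^{mn+\delta}$ in (d) must be carried out carefully to produce the clean $(\sum|x-y_j|)^{-mn}$ behavior needed by Theorem \ref{thm13}, and this is exactly the step that forces the restriction $\lambda>2m$. Everything else — the Morrey-space manipulations, the Hardy-type estimate via Lemma \ref{Gu2}, and the handling of the off-diagonal pieces $(f_1^{\alpha_1},\dots,f_m^{\alpha_m})$ with $(\alpha_1,\dots,\alpha_m)\neq 0$ — is a verbatim application of the arguments already developed for Theorem \ref{thm13}.
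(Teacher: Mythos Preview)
Your approach is essentially the same as the paper's: the paper simply asserts that the multilinear Littlewood--Paley square operators are instances of the $\mathscr{B}$-valued Calder\'on--Zygmund framework and invokes Theorem~\ref{thm13}, and you carry out that reduction in more detail than the paper does.

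There is, however, one technical slip in your treatment of $T_\lambda$: the space $\mathscr{B}=L^2\bigl(\mathbb R^{n+1}_+,(\tfrac{t}{|x-z|+t})^{n\lambda}\tfrac{dz\,dt}{t^{n+1}}\bigr)$ you propose depends on the evaluation point $x$, which is not allowed in the vector-valued framework --- the target Banach space must be fixed independently of $x$, otherwise neither the kernel estimates in $B(\mathbb C,\mathscr{B})$ nor Lemma~\ref{Gu7} make sense. The standard remedy is to first substitute $z\mapsto z+x$ in the definition of $T_\lambda$, so that the weight becomes $(\tfrac{t}{|z|+t})^{n\lambda}$ and the operator-valued kernel is $Q(x,\vec y)(z,t)=K_t(x+z,\vec y)$; then $\mathscr{B}=L^2\bigl(\mathbb R^{n+1}_+,(\tfrac{t}{|z|+t})^{n\lambda}\tfrac{dz\,dt}{t^{n+1}}\bigr)$ is a fixed Hilbert space and your plan goes through verbatim. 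Your observation that the proof of Theorem~\ref{thm13} uses only the size bound~(i) together with the weighted Lebesgue inputs (here supplied by Lemmas~\ref{xue1} and~\ref{xue2}) is correct, so the fact that the Dini kernel satisfies $\theta$-type rather than $t^\varepsilon$-type smoothness causes no trouble.
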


Combining Theorem \ref{thm14} and Theorem 1.3 in \cite{xue4}, we have a significant result for the iterated commutator of multilinear Littlewood–Paley $g$-function with convolution-type kernel, see \cite{xue4} for more details.
\begin{thm}\label{thm5}
	Let $m\in \mathbb{N}$, $\mathcal{G}$ be an $m$-linear Littlewood-Paley $g$-function with convolution-type kernel, $1< p_k<\infty$, $k=1,2,\ldots,m$ with $1/p=\sum_{k=1}^m 1/{p_k}$, $\vec{\omega}=(\omega_1,\ldots,\omega_m)\in {A_{\vec P}} \cap {\left( {{A_\infty }}\right)^m}$ and a group of non-negative measurable functions $({{\vec \varphi }_1},{\varphi _2})$ satisfy the condition $(\ref{con4})$. Set ${\mathcal{G}_{\prod \vec b }}$ be a iterated commutator of $\vec b$ and $\mathcal{G}$.
	If $\vec b \in {\left( {BMO} \right)^m}$, then ${\mathcal{G}_{\prod \vec b }}$ is bounded from $\prod\limits_{i = 1}^m {M^{{p_i},{\varphi _{1i}}}({\omega _i})}$ to ${M^{p,{\varphi _2}}}({v_{\vec \omega }})$.
\end{thm}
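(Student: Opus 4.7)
The plan is to derive Theorem \ref{thm5} as a direct application of Theorem \ref{thm14} by viewing the multilinear Littlewood--Paley $g$-function $\mathcal{G}$ as an $m$-linear $\mathscr{B}$-valued Calder\'on--Zygmund operator with the Banach space $\mathscr{B}=L^{2}((0,\infty),dt/t)$. With this identification, the scalar inequality
\[
\mathcal{G}(\vec f)(x)=\bigl\|\,t\mapsto \textstyle\int_{(\rn)^{m}}K_{t}(x,\vec y)\prod_{j=1}^{m}f_{j}(y_{j})\,d\vec y\bigr\|_{\mathscr B}
\]
makes the convolution-type kernel $(x,\vec y)\mapsto K_{t}(x,\vec y)$ into an operator-valued kernel $Q(x,\vec y)\in B(\mathbb{C},\mathscr{B})$, and the Littlewood--Paley kernel conditions translate precisely into the multilinear $\mathscr{B}$-valued Calder\'on--Zygmund kernel estimates (i)--(iii) of the previous subsection with some $\varepsilon>0$. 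The same translation identifies the iterated commutator ${\mathcal{G}_{\prod\vec b}}$ with $T_{\prod\vec b}$ in the $\mathscr{B}$-valued sense.

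Next, I would invoke Theorem 1.3 of \cite{xue4}, which provides the weighted Lebesgue space estimate
\[
\bigl\|\mathcal{G}_{\prod\vec b}(\vec f)\bigr\|_{L^{p}(v_{\vec\omega})}\lesssim \prod_{j=1}^{m}\|b_{j}\|_{BMO}\prod_{i=1}^{m}\|f_{i}\|_{L^{p_{i}}(\omega_{i})}
\]
for $\vec\omega\in A_{\vec P}$ and $1<p_{k}<\infty$. Applying this estimate to $\vec f=(f_{1}^{0},\ldots,f_{m}^{0})$ on $2B$ and following the computation carried out in (\ref{ieq1}) shows that ${\mathcal{G}_{\prod\vec b}}$ satisfies the local boundedness condition, i.e.
\[
{\mathcal{G}_{\prod\vec b}}\in LB\Bigl(\prod_{i=1}^{m}M^{p_{i},\varphi_{1i}}(\omega_{i})\to M^{p,\varphi_{2}}(v_{\vec\omega})\Bigr),
\]
with the extra product of $BMO$ norms built into the constant. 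Simultaneously, the pointwise off-diagonal bound for $\mathcal{G}$ coming from condition (d) gives, upon the geometric estimate used in (\ref{ieq2}), the local size condition $\mathcal{G}\in LS(\prod_{i=1}^{m}M^{p_{i},\varphi_{1i}}(\omega_{i}))$. At this stage all hypotheses of Theorem \ref{thm14} are fulfilled, and plugging these facts into Theorem \ref{thm14} yields the desired boundedness
\[
\bigl\|\mathcal{G}_{\prod\vec b}(\vec f)\bigr\|_{M^{p,\varphi_{2}}(v_{\vec\omega})}\lesssim \prod_{j=1}^{m}\|b_{j}\|_{BMO}\prod_{i=1}^{m}\|f_{i}\|_{M^{p_{i},\varphi_{1i}}(\omega_{i})}.
\]

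The main obstacle, as in Theorem \ref{thm13}, is to justify that $\mathcal{G}_{\prod\vec b}(\vec f)$ is well-defined on the Morrey input: for $\vec f\in\prod_{i}M^{p_{i},\varphi_{1i}}(\omega_{i})$ one has to decompose $f_{i}=f_{i}^{0}+f_{i}^{\infty}$ over an arbitrary ball $D$ and prove both that the tail sum $\sum_{(\alpha_{1},\ldots,\alpha_{m})\neq 0}\mathcal{G}_{\prod\vec b}(f_{1}^{\alpha_{1}},\ldots,f_{m}^{\alpha_{m}})(x)$ converges in $\mathscr{B}$ a.e.\ and that it is independent of $D$. This is handled exactly as in the proof of Theorem \ref{thm13}: the local part is controlled by Theorem 1.3 of \cite{xue4}, the tail parts are dominated by the local size estimate (\ref{ieq2}) together with Lemma \ref{Gu1} to absorb the two $BMO$ factors produced by the commutator structure (yielding the extra $(1+\log(t/r))^{m}$ weight that matches condition (\ref{con4})), and the ball-independence follows from a verbatim copy of Lemma \ref{Gu7}. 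Once this technical but routine extension is in place, the conclusion of Theorem \ref{thm5} drops out of Theorem \ref{thm14}.
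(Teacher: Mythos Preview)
Your proposal is correct and follows essentially the same route as the paper: the paper derives Theorem \ref{thm5} in one line by combining Theorem \ref{thm14} (applied with $\mathscr{B}=L^{2}((0,\infty),dt/t)$ so that $\mathcal{G}$ becomes an $m$-linear $\mathscr{B}$-valued Calder\'on--Zygmund operator) with Theorem 1.3 of \cite{xue4}, which supplies the weighted Lebesgue bound for $\mathcal{G}_{\prod\vec b}$ needed in the $LB$ step. Your write-up unpacks more of the internal mechanism (the $LS$/$LB$ verification, the well-definedness via Lemma \ref{Gu7}, the $(1+\log(t/r))^{m}$ factor matching condition (\ref{con4})), but the logical skeleton is identical to the paper's.
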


Similarly, Combining Theorem \ref{thm15} and Theorem 1.5 in \cite{He}, we also have a significant result for the multilinear commutator of multilinear Marcinkiewicz integral with convolution-type homogeneous kernel, see \cite{He} and Remark 1.1 in \cite{xue1} for more details.

\begin{thm}\label{thm6}
	Let $m\in \mathbb{N}$, $\mu$ be an $m$-linear Marcinkiewicz integral with convolution-type homogeneous kernel, $1< p_k<\infty$, $k=1,2,\ldots,m$ with $1/p=\sum_{k=1}^m 1/{p_k}$, $\vec{\omega}=(\omega_1,\ldots,\omega_m)\in {A_{\vec P}} \cap {\left( {{A_\infty }}\right)^m}$ and a group of non-negative measurable functions $({{\vec \varphi }_1},{\varphi _2})$ satisfy the condition $(\ref{con7})$.
	Set ${\mu_{\sum {\vec b} }}$ be a multilinear commutator of $\vec b$ and $\mu$. 
	If $\vec b \in {\left( {BMO} \right)^m}$, then ${\mu_{\sum {\vec b} }}$ is bounded from $\prod\limits_{i = 1}^m {M^{{p_i},{\varphi _{1i}}}({\omega _i})}$ to ${M^{p,{\varphi _2}}}({v_{\vec \omega }})$.
\end{thm}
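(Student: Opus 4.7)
The plan is to derive Theorem \ref{thm6} as a direct consequence of Theorem \ref{thm15} by realizing the multilinear Marcinkiewicz integral $\mu$ as an $m$-linear $\mathscr{B}$-valued Calder\'on--Zygmund operator in the sense of Section~4. First I would fix the target Banach space $\mathscr{B}=L^{2}\bigl((0,\infty),dt/t\bigr)$ (with $\|\cdot\|_{\mathscr{B}}$ realized via the usual Marcinkiewicz cone thickness), and rewrite
\[
\mu(\vec f)(x)=\Bigl\|\int_{(\rn)^{m}}K_{t}(x,\vec y)\prod_{j=1}^{m}f_{j}(y_{j})\,d\vec y\Bigr\|_{\mathscr B},
\]
so that the operator-valued kernel $Q(x,\vec y)$ is given by the map $c\mapsto c\cdot K_{\cdot}(x,\vec y)\in\mathscr B$.

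Next I would verify that $Q$ satisfies the three $\mathscr{B}$-valued Calder\'on--Zygmund kernel conditions (i)--(iii) of Section~4 with some $\varepsilon>0$. The key input is that, by Remark~1.1 in \cite{xue1}, every multilinear Marcinkiewicz kernel is a multilinear Littlewood--Paley kernel, and hence, by the lemma of Xue--Yan, a C--Z kernel of type $I$, which in turn is a Dini kernel. Thus the pointwise bounds on $\|Q(x,\vec y)\|_{B(\cc,\mathscr B)}$, and the H\"older-type differences in the $y_i$ and $x$ variables with exponent $\gamma_{0}$, are exactly the Littlewood--Paley cone estimates translated to the $\mathscr{B}$-norm. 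For the required $L^{q_{1}}\times\cdots\times L^{q_{m}}\to L^{q}(\mathscr B)$ boundedness, I would cite Theorem~1.5 of \cite{He} (or equivalently Lemma \ref{xue1}) which provides the unweighted and weighted $L^{p}$ estimates; the identification $\|\mu(\vec f)\|_{L^{p}}=\|\mu(\vec f)\|_{L^{p}(\mathscr B)}$ is immediate from the definition of $\mathscr B$.

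With $\mu$ thus cast as an $m$-linear $\mathscr{B}$-valued Calder\'on--Zygmund operator, I would directly invoke Theorem \ref{thm15}: under the hypotheses $\vec\omega\in A_{\vec P}\cap(A_{\infty})^{m}$, $\vec b\in(BMO)^{m}$, and the structural condition $(\ref{con7})$ on $(\vec\varphi_{1},\varphi_{2})$, the multilinear commutator ${\mu_{\sum\vec b}}$ is bounded from $\prod_{i=1}^{m}M^{p_{i},\varphi_{1i}}(\omega_{i})$ to $M^{p,\varphi_{2}}(v_{\vec\omega},\mathscr B)$. Since $\|\mu_{\sum\vec b}(\vec f)\|_{\mathscr B}=\mu_{\sum\vec b}(\vec f)$ pointwise, this target norm collapses to $M^{p,\varphi_{2}}(v_{\vec\omega})$, yielding Theorem \ref{thm6}.

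The main obstacle is the first step: the clean identification of the Marcinkiewicz kernel with a $\mathscr{B}$-valued CZ kernel is slightly subtle because the Marcinkiewicz kernel is only of type $I$ with exponent $\gamma_{0}$ (not of \emph{any} Dini-type $\theta$ a priori), so one must be careful that the Littlewood--Paley regularity inherited via Remark 1.1 of \cite{xue1} genuinely produces the $L^{2}(dt/t)$-integrated H\"older differences demanded in conditions (ii)--(iii). Once this kernel-level verification is in place, the reduction to Theorem \ref{thm15} is formal, and no further estimate on the commutator itself is needed.
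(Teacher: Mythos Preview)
Your proposal is correct and follows essentially the same route as the paper: the paper derives Theorem~\ref{thm6} by ``combining Theorem~\ref{thm15} and Theorem~1.5 in \cite{He}'' together with Remark~1.1 of \cite{xue1}, which is exactly your plan of realizing $\mu$ as an $m$-linear $\mathscr{B}$-valued Calder\'on--Zygmund operator with $\mathscr{B}=L^{2}((0,\infty),dt/t)$ and then invoking Theorem~\ref{thm15}. The kernel verification you worry about is handled in the paper precisely via the chain Marcinkiewicz kernel $\Rightarrow$ Littlewood--Paley kernel $\Rightarrow$ C--Z type~$I$ kernel (Lemma~4.2), so your concern about the $L^{2}(dt/t)$-integrated H\"older differences is already resolved there; one minor clarification is that the $L^{q_1}\times\cdots\times L^{q_m}\to L^{q}$ input needed to certify $\mu$ as a $\mathscr{B}$-valued CZ operator is the bound for $\mu$ itself (Lemma~\ref{xue1}), while Theorem~1.5 of \cite{He} supplies the weighted $L^{p}$ bound for $\mu_{\sum\vec b}$ that feeds into the $LB$ step of Theorem~\ref{thm15}.
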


\begin{center}
\textbf{III: multilinear pseudo-differential operators and multilinear paraproducts}
\end{center}
\begin{lem}[\cite{Gra2}, p.555, exercises 7.4.4]\label{PD}
	Let $\sigma$ be a smooth function on $(\rn)^{m+1}$ satisfying 
	\begin{align*}
	\left| {\partial _x^\alpha \partial _{{\xi _1}}^{{\beta _1}} \cdots \partial _{{\xi _m}}^{{\beta _m}}\sigma (x,\vec \xi )} \right| \le {C_{\alpha ,\beta }}{(1 + \left| {{\xi _1}} \right| +  \cdots  + \left| {{\xi _m}} \right|)^{\left| \alpha  \right| - (\left| {{\beta _1}} \right| +  \cdots  + \left| {{\beta _m}} \right|)}},
	\end{align*}
	for all $\alpha ,{\beta _1}, \ldots ,{\beta _m}$ $n$-tuples of nonnegative integers. Let $\mathop \sigma \limits^ \vee  (x,\vec z)$ be the inverse Fourier transform of $\sigma (x,\vec \xi )$ in the ${\vec \xi }$ variable, $K({y_0}, \ldots ,{y_m}) = \mathop \sigma \limits^ \vee  ({y_0},{y_0} - {y_1}, \ldots ,{y_0} - {y_m})$. The m-linear pseudodifferential operator is defined by
	\begin{align*}
	{T_\sigma }({f_1}, \cdots ,{f_m})(x) = \int_{\nm} {\sigma (x,\vec \xi )\widehat {{f_1}}({\xi _1}) \cdots \widehat {{f_m}}({\xi _m}){e^{2\pi ix \cdot ({\xi _1} +  \cdots  + {\xi _m})}}d{\xi _1} \cdots d{\xi _m}}.
	\end{align*}
	Suppose that all of the transposes ${T^{*j}}$ also have symbols that satisfy the same estimates as $\sigma$. Then we have 
	\begin{align*}
	\left| {\partial _{{y_0}}^{{\alpha _0}} \cdots \partial _{{y_0}}^{{\alpha _m}}K({y_0}, \ldots ,{y_m})} \right| \le \frac{{{C_{{\alpha _0}, \ldots ,{\alpha _m}}}}}{{{{(\sum\limits_{i = 1}^m {\left| {{y_0} - {y_i}} \right|} )}^{mn + \left| {{\alpha _0}} \right| +  \cdots  + \left| {{\alpha _m}} \right|}}}},
	\end{align*}
	in particular, K is a $m$-linear Calder\'on-Zygmund kernel. Furthermore, ${T_\sigma }$ can extend as bounded operator from ${L^{{q_1}}} \times  \cdots  \times {L^{{q_m}}}$ to $L^q$, for ${q_j} \in (1,\infty )$ and $\frac{1}{q} = \sum\limits_{i = 1}^m {\frac{1}{{{q_i}}}}$. From this, we can conclude ${T_\sigma }$ is a $m$-linear Calder\'on-Zygmund operator.
\end{lem}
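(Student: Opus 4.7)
The plan is to first derive pointwise kernel size and smoothness estimates from the symbol estimates by Fourier analytic integration by parts, then verify that $K$ satisfies the $m$-linear Calderón-Zygmund kernel conditions, and finally couple the kernel properties with a direct boundedness bound on a single tuple to conclude via the multilinear $T(1)$ theorem that $T_\sigma$ is an $m$-linear Calderón-Zygmund operator with the claimed range of $L^{q_1}\times\cdots\times L^{q_m}\to L^q$ bounds.

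First I would establish the derivative estimates. Starting from
$$K(y_0,y_1,\ldots,y_m) = \int_{(\rn)^m} \sigma(y_0,\vec\xi)\, e^{2\pi i\sum_{j=1}^m(y_0-y_j)\cdot\xi_j}\, d\vec\xi,$$
formal differentiation in $y_0,\ldots,y_m$ brings down polynomial factors in $\vec\xi$ and combines derivatives of $\sigma$ in $y_0$ with additional factors of $2\pi i\xi_j$. Since the resulting integrand would not be absolutely convergent on its own, I would multiply by $(y_0-y_{j_0})^{\gamma}$ for a suitable multi-index $\gamma$, then integrate by parts in $\xi_{j_0}$ to transfer the monomial onto $\partial_{\xi_{j_0}}^{\gamma}$ acting on $\sigma$. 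The symbol estimates then control the integrand by a constant times $(1+|\vec\xi|)^{|\alpha_0|-|\gamma|}$, and choosing $|\gamma|>mn+\sum_i|\alpha_i|$ yields absolute convergence plus the uniform bound
$$|y_0-y_{j_0}|^{|\gamma|}\,\bigl|\partial_{y_0}^{\alpha_0}\cdots\partial_{y_m}^{\alpha_m}K(y_0,\ldots,y_m)\bigr|\lesssim 1.$$
Choosing $j_0$ to be the index maximizing $|y_0-y_j|$ and then symmetrizing over $j$ delivers the stated decay in $\sum_{j=1}^m|y_0-y_j|$.

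Applying this with $\alpha_0=\cdots=\alpha_m=0$ gives the CZ size condition, and the smoothness conditions in each $y_i$ and in $y_0$ then follow by the mean value theorem applied to the first-order derivative estimates, invoking the constraints $|y_i-y_i'|\le\tfrac12\max_j|x-y_j|$ and analogously for $y_0$ to ensure the denominator on the segment of integration is comparable to $\sum_j|x-y_j|$. This confirms that $K$ is an $m$-linear Calderón-Zygmund kernel with smoothness exponent $\gamma_0=1$ (and in fact any $\gamma_0\in(0,1]$).

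For the $L^p$ boundedness, I would first establish $T_\sigma:L^2\times\cdots\times L^2\to L^{2/m}$. This can be done either by a direct Plancherel argument exploiting the uniform bound $|\sigma(y_0,\vec\xi)|\lesssim 1$ together with smoothness in $y_0$, or more conceptually via the multilinear $T(1)$ theorem of Grafakos-Torres: the symbol estimates and the assumption on all transposes $T_\sigma^{*j}$ yield $T_\sigma(1,\ldots,1),\,T_\sigma^{*j}(1,\ldots,1)\in BMO$ and the weak boundedness property as standard consequences of the derivative bounds. The main obstacle is pinning down the weak boundedness property directly from the symbol estimates, since this requires several technical estimates on the action of $T_\sigma$ on appropriately scaled and translated bump functions; each piece is standard but bookkeeping-heavy. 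Once $L^2\times\cdots\times L^2\to L^{2/m}$ is established and the kernel is known to be CZ, the multilinear Calderón-Zygmund theory extends this to boundedness from $L^{q_1}\times\cdots\times L^{q_m}$ to $L^q$ for all $1<q_j<\infty$ with $1/q=\sum_j 1/q_j$, so that $T_\sigma$ is in particular an $m$-linear Calderón-Zygmund operator.
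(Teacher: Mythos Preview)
The paper does not prove this lemma at all: it is stated as a citation of an exercise from Grafakos, \emph{Modern Fourier Analysis} (p.~555, Exercise~7.4.4), and is used as a black box. So there is no ``paper's own proof'' to compare against.

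Your sketch is the standard textbook argument and is essentially correct, but there is one technical point worth flagging. Your integration-by-parts step gives
\[
|y_0 - y_{j_0}|^{|\gamma|}\,\bigl|\partial_{y_0}^{\alpha_0}\cdots\partial_{y_m}^{\alpha_m}K\bigr| \lesssim 1
\]
only for integer values $|\gamma| > mn + \sum_i|\alpha_i|$, which yields decay \emph{faster} than the claimed power but not the exact exponent $mn+\sum_i|\alpha_i|$. To get the precise power one typically introduces a smooth dyadic decomposition of $\sigma$ in the $\vec\xi$ variable (say $\sigma = \sum_{k\ge 0}\sigma_k$ with $\sigma_k$ supported where $|\vec\xi|\sim 2^k$), proves for each piece the estimate $|\partial^{\vec\alpha}\check\sigma_k(y_0,\vec z)| \lesssim 2^{k(mn+|\vec\alpha|)}(1+2^k|\vec z|)^{-N}$ for all $N$, and then sums in $k$; the geometric sum peaks at $2^k\sim |\vec z|^{-1}$ and gives exactly $|\vec z|^{-mn-|\vec\alpha|}$. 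Also, in your displayed bound on the integrand the exponent should be $\sum_{i\ge 0}|\alpha_i| - |\gamma|$, not $|\alpha_0|-|\gamma|$, since each $\partial_{y_j}^{\alpha_j}$ with $j\ge 1$ brings down a factor of $\xi_j^{\alpha_j}$ as well. These are routine fixes; the overall strategy (kernel estimates from symbol estimates, then the multilinear $T(1)$ theorem or an initial $L^2\times\cdots\times L^2\to L^{2/m}$ bound to invoke the multilinear CZ machinery) is exactly how the result is established in the reference the paper cites.
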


\begin{lem}[\cite{Gra2}, p.556, exercises 7.4.5 and 7.4.6]\label{Pa}
	Let $\Psi  \in \mathscr{S}({\rn})$ whose Fourier transform is supported in the annulus $\frac{6}{7} \le \left| \xi  \right| \le 2$ and is equal to $1$ on $1 \le \left| \xi  \right| \le \frac{{12}}{7}$. Set $\Delta _j^\Psi (f){\rm{ = }}f * {\Psi _{{2^{ - j}}}}$ and ${S_j}(f) = \sum\limits_{k \le j} {\Delta _j^\Psi (f)}$. For $f,g \in \mathscr{S}(\rn)$, the bilinear and trilinear paraproducts are defined by
	\begin{align*}
	{\Pi _2}({f_1},{f_2}) =& \sum\limits_{j \in \mathbb{Z}} {\Delta _j^\Psi ({f_1})} {S_j}({f_2})\\
	{\Pi _3}({f_1},{f_2},{f_3}) =& \sum\limits_{j \in \mathbb{Z}} {\Delta _j^\Psi ({f_1})} {S_j}({f_2}){S_j}({f_3})
	\end{align*}
	Then the kernel of ${\Pi _i}$ is a i-linear Calder\'on-Zygmund kernel, $i=2,3$. Moreover, we can conclude ${\Pi _i}$ is a $i$-linear Calder\'on-Zygmund operator, $i=2,3$.
\end{lem}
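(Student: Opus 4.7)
The plan is to prove Lemma~\ref{Pa} in two stages: (i) identify each paraproduct as an integral operator and verify directly that its kernel satisfies the size and Lipschitz estimates defining an $i$-linear Calder\'on--Zygmund kernel; (ii) obtain $L^{q_1}\times\cdots\times L^{q_i}\to L^q$ boundedness by recognizing the Fourier multiplier of $\Pi_i$ as a Coifman--Meyer symbol and invoking Lemma~\ref{PD}.

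For step (i) I first rewrite the low-frequency cut as a single convolution: there exists $\Phi\in\mathscr S(\rn)$ with $\widehat\Phi$ smooth and supported in $|\xi|\le 2$ such that $S_j(f)=f\ast\Phi_{2^{-j}}$. Interchanging the sum and the integrals yields
\[
\Pi_i(f_1,\dots,f_i)(x)=\int_{(\rn)^i}K(x,\vec y)\prod_{k=1}^{i}f_k(y_k)\,d\vec y,
\]
with kernel $K(x,\vec y)=\sum_{j\in\z}\Psi_{2^{-j}}(x-y_1)\prod_{k=2}^{i}\Phi_{2^{-j}}(x-y_k)$. To estimate $K$ I set $r=\max_{1\le k\le i}|x-y_k|$ and use the Schwartz decay $|\Psi(u)|,|\Phi(u)|\lesssim (1+|u|)^{-N}$ for arbitrarily large $N$. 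Splitting the dyadic sum at $2^j\sim 1/r$, the low-$j$ tail is a geometric sum of $2^{ijn}\lesssim r^{-in}$ and the high-$j$ tail is controlled by choosing $N>in$, giving the target bound $|K(x,\vec y)|\lesssim (\sum_k|x-y_k|)^{-in}$. Differentiating the profile $\Psi$ or $\Phi$ in any slot inserts an extra factor $2^j$ and repeats the same split, producing an $r^{-in-1}$ bound; the mean value theorem then delivers the Lipschitz (i.e.\ $\theta(t)=t$) regularity required in every variable.

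For step (ii), I view $\Pi_i$ as an $i$-linear pseudo-differential operator with $x$-independent symbol
\[
\sigma(\vec\xi)=\sum_{j\in\z}\widehat\Psi(2^{-j}\xi_1)\prod_{k=2}^{i}\widehat\Phi(2^{-j}\xi_k).
\]
The annular support $6/7\le|\xi|\le 2$ of $\widehat\Psi$ leaves only $O(1)$ nonzero terms per $\vec\xi$ and pins $2^j\sim|\xi_1|$; combined with $|\xi_k|\le 2\cdot 2^j$ from the support of $\widehat\Phi$, this forces $2^j\sim|\xi_1|+\cdots+|\xi_i|$, from which the Coifman--Meyer symbol estimates $|\partial_{\xi_1}^{\beta_1}\cdots\partial_{\xi_i}^{\beta_i}\sigma(\vec\xi)|\lesssim(1+|\vec\xi|)^{-|\beta_1|-\cdots-|\beta_i|}$ follow immediately. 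Lemma~\ref{PD} then packages both the C--Z type kernel regularity and the multilinear $L^p$-boundedness in one step, completing the proof.

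The main obstacle is the hypothesis in Lemma~\ref{PD} that all transposes $T^{*k}$ satisfy the same symbol estimates. The $k$-th transpose of $\Pi_i$ is again of paraproduct form but with the high-frequency projector $\Delta_j^\Psi$ placed on a different slot, so its symbol reads $\sum_j\widehat\Phi(2^{-j}\xi_1)\cdots\widehat\Psi(2^{-j}\xi_k)\cdots\widehat\Phi(2^{-j}\xi_i)$; the annular support of $\widehat\Psi$ still forces $2^j\sim|\xi_k|\sim|\vec\xi|$, so replaying the same bookkeeping with the indices permuted recovers the Coifman--Meyer estimates for every transpose. Once all transposes are handled, Lemma~\ref{PD} delivers the full multilinear Calder\'on--Zygmund conclusion for both $\Pi_2$ and $\Pi_3$.
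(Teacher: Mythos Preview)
The paper does not prove this lemma; it is quoted from Grafakos and used as a black box. Your Step~(i) (direct kernel estimates via dyadic splitting at $2^j\sim 1/r$) is correct and is the standard route to the Calder\'on--Zygmund kernel bounds.

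Step~(ii), however, has a real gap. The symbol
\[
\sigma(\vec\xi)=\sum_{j\in\mathbb Z}\widehat\Psi(2^{-j}\xi_1)\prod_{k\ge 2}\widehat\Phi(2^{-j}\xi_k)
\]
does \emph{not} satisfy the hypothesis of Lemma~\ref{PD}. That lemma asks for $\sigma$ smooth on all of $(\rn)^{m+1}$ with $|\partial^{\beta}\sigma(\vec\xi)|\lesssim(1+|\vec\xi|)^{-|\beta|}$, i.e.\ bounded derivatives near the origin. But for the paraproduct symbol, the nonzero terms have $2^j\sim|\xi_1|$, so $|\partial^{\beta}\sigma(\vec\xi)|\sim 2^{-j|\beta|}\sim|\vec\xi|^{-|\beta|}$, which blows up as $\vec\xi\to 0$; in fact $\sigma$ is not even $C^1$ at the origin. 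You have verified the Coifman--Meyer (Mikhlin-type) condition, not the H\"ormander-type condition in Lemma~\ref{PD}, and the two are not interchangeable.

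A second, smaller error: the $k$-th transpose of a multilinear multiplier with symbol $\sigma(\xi_1,\dots,\xi_m)$ has symbol $\sigma(\xi_1,\dots,\xi_{k-1},-\xi_1-\cdots-\xi_m,\xi_{k+1},\dots,\xi_m)$, not the symbol with slots permuted. Your description ``$\Delta_j^\Psi$ placed on a different slot'' is not what the transpose is. (The transposed symbols do in fact satisfy the homogeneous Coifman--Meyer estimate, but for a different reason than you give, and this still does not rescue the appeal to Lemma~\ref{PD}.)

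Since Step~(i) already delivers the kernel regularity, what you actually need from Step~(ii) is a single initial boundedness $L^{q_1}\times\cdots\times L^{q_i}\to L^q$. Two clean ways to get it: either invoke the Coifman--Meyer multilinear multiplier theorem directly (which is formulated with the homogeneous bound $|\vec\xi|^{-|\beta|}$ you actually have), or prove one endpoint by hand using Littlewood--Paley almost-orthogonality, which is the route the Grafakos exercises intend. Replacing your appeal to Lemma~\ref{PD} by either of these completes the argument.
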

From the above Lemmas, we can directly obtain the following results.
\begin{cor}\label{cor3}
	With the notation of Lemma $\ref{PD}$ and $\ref{Pa}$, then the results of Theorem $\ref{thm13}$, $\ref{thm14}$ and $\ref{thm15}$ still hold for ${T_\sigma }$, ${\Pi _2}$ and ${\Pi _3}$.
\end{cor}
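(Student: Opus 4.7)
The plan is to deduce Corollary \ref{cor3} as an immediate consequence of Theorems \ref{thm13}, \ref{thm14} and \ref{thm15} once the operators $T_\sigma$, $\Pi_2$ and $\Pi_3$ are identified as (scalar-valued) multilinear Calder\'on-Zygmund operators. The bridge is supplied by Lemmas \ref{PD} and \ref{Pa}. First I would remark that the scalar-valued theory is the special case $\mathscr{B}=\cc$ (with $\|\cdot\|_\mathscr{B}=|\cdot|$) of the $\mathscr{B}$-valued framework underlying Theorems \ref{thm13}--\ref{thm15}, so that every conclusion formulated there specializes to a statement in $M^{p,\varphi_2}(v_{\vec\omega})$ and $WM^{p,\varphi_2}(v_{\vec\omega})$ in the scalar sense.

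For $T_\sigma$ the verification proceeds in three short steps. First, by Lemma \ref{PD}, the kernel $K$ of $T_\sigma$ satisfies
\[
|\partial_{y_0}^{\alpha_0}\cdots\partial_{y_m}^{\alpha_m}K(y_0,\ldots,y_m)|\lesssim \Big(\textstyle\sum_{i=1}^m|y_0-y_i|\Big)^{-mn-|\alpha_0|-\cdots-|\alpha_m|}.
\]
Taking $\alpha_j=0$ for all $j$ gives size condition (i), and taking one $|\alpha_j|=1$ together with the mean value theorem gives the Lipschitz (hence H\"older with any $\varepsilon\in(0,1]$) estimates (ii) and (iii) of the $\mathscr{B}$-valued CZ kernel definition. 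Second, the same lemma provides the required $L^{q_1}\times\cdots\times L^{q_m}\to L^q$ boundedness for some $q_i\in(1,\infty)$. Third, the weight hypothesis $\vec\omega\in A_{\vec P}\cap(A_\infty)^m$ and the regularity conditions \eqref{con3}, \eqref{con4}, \eqref{con7} on $(\vec\varphi_1,\varphi_2)$ are precisely those of Theorems \ref{thm13}, \ref{thm14} and \ref{thm15}. Applying those theorems to $T_\sigma$ (and to its iterated/multilinear commutators with $\vec b\in(BMO)^m$) yields the desired boundedness on the generalized weighted Morrey spaces.

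The paraproduct case is identical: Lemma \ref{Pa} directly asserts that $\Pi_2$ and $\Pi_3$ are bilinear and trilinear Calder\'on-Zygmund operators, so the same three-step reduction applies with $m=2$ and $m=3$, respectively, and Theorems \ref{thm13}--\ref{thm15} deliver the conclusions for $\Pi_i$, $(\Pi_i)_{\Pi\vec b}$, and $(\Pi_i)_{\Sigma\vec b}$. The only subtle point, and the closest thing to a real obstacle, is matching the differentiability-type kernel bounds of Lemmas \ref{PD}-\ref{Pa} with the H\"older-type conditions (ii)-(iii) of the $\mathscr{B}$-valued CZ framework; this is dispatched by the mean value theorem and requires no new estimates. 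Hence the corollary is essentially a transcription of Theorems \ref{thm13}-\ref{thm15} through the identification supplied by the two cited lemmas.
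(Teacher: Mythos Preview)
Your proposal is correct and follows the same route as the paper, which simply states that the corollary is obtained directly from Lemmas \ref{PD} and \ref{Pa}. Note that both lemmas already conclude that $T_\sigma$, $\Pi_2$ and $\Pi_3$ are multilinear Calder\'on--Zygmund operators, so your mean-value-theorem verification of the kernel conditions, while harmless, is redundant with what the cited lemmas assert; once that identification is granted, the specialization $\mathscr{B}=\cc$ lets Theorems \ref{thm13}--\ref{thm15} apply verbatim, exactly as you say.
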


\begin{rem}
	In Theorem $\ref{thm13}, \ref{thm14}, \ref{thm15}, \ref{thm4}, \ref{thm5}, \ref{thm6}$ and Corollary $\ref{cor3}$, if we replace ${M^{{p_i},{\varphi _{11i}}}}({\omega _i})$, ${M^{p,{\varphi _2}}}({v_{\vec \omega }})$ by ${M_{x_0}^{{p_i},{\varphi _{11i}}}}({\omega _i})$ and ${M_{x_0}^{p,{\varphi _2}}}({v_{\vec \omega }})$, then according to Theorem $\ref{thm7}, \ref{thm8}$, the similar results still hold for generalized local weighted Morrey spaces.
\end{rem}
\vspace{0.4cm}

\medskip 

\noindent{\bf Data Availability} Our manuscript has no associated data.

\medskip 
\noindent{\bf\Large Declarations}
\medskip 

\noindent{\bf Conflict of interest} The authors state that there is no conflict of interest.

\bigskip

\noindent Xi Cen

\smallskip

\noindent {\it Address:} School of Mathematics and Physics, Southwest University of Science and Technology, Mianyang, 621010, P. R. China

\smallskip

\noindent {\it E-mail:} xicenmath@gmail.com

\bigskip

\noindent Xiang Li

\smallskip

\noindent {\it Address:} School of Science, Shandong Jianzhu University, Jinan, 250000, P. R. China

\smallskip

\noindent {\it E-mail:} lixiang162@mails.ucas.ac.cn

\bigskip

\noindent Dunyan Yan

\smallskip

\noindent {\it Address:} School of Mathematical Sciences, University of Chinese Academy of Sciences, Beijing, 100049, P. R. China

\smallskip

\noindent {\it E-mail:} ydunyan@ucas.ac.cn
\end{document}